\newcommand{\R}{{\Bbb R}}
\newcommand{\N}{{\Bbb N}}
\newcommand{\C}{{\Bbb C}}
\newcommand{\D}{{\Bbb D}}
\newcommand{\re}{\text{\upshape Re\,}}
\newtheorem{theorem}{Theorem}[section]
\newtheorem{proposition}[theorem]{Proposition}
\newtheorem{lemma}[theorem]{Lemma}
\newtheorem{corollary}[theorem]{Corollary}
\newtheorem{definition}[theorem]{Definition}
\newtheorem{remark}[theorem]{Remark}
\newtheorem{RHproblem}[theorem]{RH problem}
\newtheorem{figuretext}{Figure}
\numberwithin{equation}{section}
\tikzset{middlearrow/.style={
			decoration={markings,
				mark= at position 0.6 with {\arrow{#1}} ,
			},
			postaction={decorate}
		}
	}
\tikzset{->-/.style={decoration={
				markings,
				mark=at position #1 with {\arrow{latex}}},postaction={decorate}}}
\tikzset{-<-/.style={decoration={
				markings,
				mark=at position #1 with {\arrowreversed{latex}}},postaction={decorate}}}
				\tikzset{
	master/.style={
		execute at end picture={
			\coordinate (lower right) at (current bounding box.south east);
			\coordinate (upper left) at (current bounding box.north west);
		}
	},
	slave/.style={
		execute at end picture={
			\pgfresetboundingbox
			\path (upper left) rectangle (lower right);
		}
	}
}
\tikzset{cross/.style={cross out, draw, 
         minimum size=2*(#1-\pgflinewidth), 
         inner sep=0pt, outer sep=0pt}}
\def\XXint#1#2#3{{\setbox0=\hbox{$#1{#2#3}{\int}$ }
\vcenter{\hbox{$#2#3$ }}\kern-.59\wd0}}
\title[B\MakeLowercase{low-up solutions of the} B\MakeLowercase{oussinesq equation}]
{\Large B\MakeLowercase{low-up solutions of the ``bad"} B\MakeLowercase{oussinesq equation}}
\author{C\MakeLowercase{hristophe} C\MakeLowercase{harlier}}
\address{Centre for Mathematical Sciences, Lund University, Sweden.}
\email{christophe.charlier@math.lu.se}
\begin{document}

\begin{abstract}
We study blow-up solutions of the ``bad" Boussinesq equation, and prove that a wide range of asymptotic scenarios can happen. For example, for each $T>0$, $x_{0}\in \R$ and $\delta \in (0,1)$, we prove that there exist Schwartz class solutions $u(x,t)$ on $\R \times [0,T)$ such that $|u(x,t)| \leq C \frac{1+x^{2}}{(x-x_{0})^{2}}$ and $u(x_{0},t)\asymp (T-t)^{-\delta}$ as $t\to T$.

We also prove that for any $q\in \N$, $T>0$, $x_{0}\in \R$, $\delta \in (0,\frac{1}{2})$, there exist Schwartz class solutions $u(x,t)$ on $\R \times [0,T)$ such that
\begin{itemize}
\item[(i)] $|\partial_{x}^{q_{1}}\partial_{t}^{q_{2}}u(x,t)|\leq C$ for each $q_{1},q_{2}\in \N$ such that $q_{1}+2q_{2}\leq q$,
\item[(ii)] $|\partial_{x}^{q_{1}}\partial_{t}^{q_{2}}u(x,t)| \leq C \frac{1+|x|}{|x-x_{0}|}$  for each $q_{1},q_{2}\in \N$ such that $q_{1}+2q_{2}= q+1$,
\item[(iii)] $|\partial_{x}^{q_{1}}\partial_{t}^{q_{2}}u(x_{0},t)| \asymp (T-t)^{-\delta}$ as $t\to T$ for each $q_{1},q_{2}\in \N$ such that $q_{1}+2q_{2}= q+1$.
\end{itemize}
In particular, when $q=0$, this result establishes the existence of wave-breaking solutions, i.e.\ solutions that remain bounded but whose $x$-derivative blows up in finite time. 
\end{abstract}

\maketitle

\noindent
{\small{\sc AMS Subject Classification (2020)}: 35B44, 35G25, 35Q15, 76B15.}
%37K15

\noindent
{\small{\sc Keywords}: Blow-up solutions, initial value problem, Riemann-Hilbert problem.}

\setcounter{tocdepth}{1}
%\tableofcontents

\section{Introduction}

We consider the initial value problem for the Boussinesq equation
\begin{align}
& u_{tt} = u_{xx} + (u^2)_{xx} + u_{xxxx}, \label{boussinesq} \\[0.2cm] %& & x\in \R, \; t \geq 0,  %\\[0.2cm]
& u(x,0) = u_{0}(x), \qquad u_{t}(x,0) = u_{1}(x), \qquad x \in \R, \nonumber %\label{initial data intro}
\end{align}
where $u:\R \times [0,T)$ is a smooth real-valued function, %$\phi_{0},\phi_{1}$ are the initial data, 
$T>0$ is the maximal existence time of the solution, subscripts denote partial derivatives, and $u_{0},u_{1}$ are some given initial data. If $T$ is finite, then $u$ is said to be a blow-up solution. This paper is concerned with the existence and the asymptotic behavior as $t\to T$ of blow-up solutions of \eqref{boussinesq}. %, i.e. solutions whose maximal existence time $T$ is finite.

Equation \eqref{boussinesq} was first derived by Boussinesq \cite{B1872} in 1872 as a model for small-amplitude dispersive waves in shallow water. This equation also describes several other physical phenomena, such as the propagation of ion-sound waves in a uniform isotropic plasma \cite{M1978}, nonlinear lattice waves \cite{T1975}, and as a continuum approximation of the Fermi--Pasta--Ulam--Tsingou lattice dynamics \cite{M1978}. Moreover, unlike the KdV equation, \eqref{boussinesq} models waves propagating in both the positive and negative $x$-directions, see e.g. \cite[Section 3.2.5]{J1997}.

%Unlike the KdV equation, the Boussinesq equation model waves propagating in both the positive and negative $x$-directions, see e.g. \cite[Section 3.2.5]{J1997}.  

%\medskip Nevertheless, blow-up solutions of the Boussinesq equation remain of high improtance from a mathematical standpoint.

The initial value problem for the linear equation obtained by removing the $(u^2)_{xx}$-term in \eqref{boussinesq}, namely $u_{tt} = u_{xx} + u_{xxxx}$, is ill-posed. Because of this feature, \eqref{boussinesq} is often referred to as the ``bad" Boussinesq equation. 
%This linear instability is due to the fact that high frequency Fourier modes grow fast in time. Hence small perturbations of the initial data are expected to grow fast in time, which might seem to contradict \eqref{boussinesq} as a model for water waves. In fact, \eqref{boussinesq} remains valid as a water wave model as long as high frequency modes are absent (which is consistent with the assumption of Boussinesq that the wavelength is long compared to the depth of the water). The solutions relevant for water waves correspond to initial data without high-frequency modes. 
In the 1970's, Hirota \cite{H1973} constructed multi-solitons solutions of \eqref{boussinesq}, and Zakharov \cite{Z1974} obtained a Lax pair for \eqref{boussinesq}. These works led to the understanding that \eqref{boussinesq} is integrable. 
%So far only limited results are available on blow-up solutions of \eqref{boussinesq}. 
The long-time behavior of global solutions to the initial value problem for \eqref{boussinesq} was recently obtained in \cite{CLmain, CL V, CL IV, CL I, CLsolitonResolution}. However, despite the long history of the Boussinesq equation, only few results on blow-up solutions are available in the literature. The instability of the solitons under small perturbations are discussed in \cite{B1976, BZ2002}. The works \cite{KL1977, LS1985, Y2002, YW2003} consider two initial-boundary value problems for \eqref{boussinesq} on a finite $x$-interval, and provide sufficient conditions on the initial-boundary data for the nonexistence of global solutions. Quoting \cite{Y2002}, ``A part of the reason for the paucity of the results is due to the properties of the linear part of \eqref{boussinesq} that are so `bad' that the traditional mathematical methods cease to be effective". 

The direct and inverse scattering problems for the initial value problem for \eqref{boussinesq} were recently studied in \cite{CLmain}. In particular, it is proved in \cite{CLmain} that for each $T > 0$, there exist Schwartz class solutions on $\R\times [0,T)$ that blow up exactly at time $T$. Here, a Schwartz class solution of \eqref{boussinesq} is defined as follows. 

\begin{definition}\label{def:schwartz class solution}\upshape
Let $T \in (0, \infty]$. We say that $u(x,t)$ is a {\it Schwartz class solution of \eqref{boussinesq} on $\R \times [0,T)$} if
\begin{enumerate}[$(i)$] 
  \item $u$ is a smooth real-valued solution of \eqref{boussinesq} for $(x,t) \in \R \times [0,T)$.

  \item $u$ has rapid decay as $|x| \to \infty$: for each integer $N \geq 1$ and each $\tau \in [0,T)$,
$$\sup_{\substack{x \in \R \\ t \in [0, \tau]}} \sum_{i =0}^N (1+|x|)^N |\partial_x^i u(x,t)|  < \infty.$$
\end{enumerate} 
\end{definition}

%In this work we improve on this result and prove the following.  

Blow-up solutions are better understood for other water wave equations, such as (generalizations of) the Camassa-Holm equation  \cite{CE1998A, CE1998, CE2000, WZ2006, GY2010, MY2023, WY2023, YC2024}, the Degasperis--Procesi equation \cite{Z2004, LY2006}, and the Novikov equation \cite{JN2012, YLZ2012}. The aforementioned works represent only a fraction of the extensive literature on blow-up results. A blow-up solution is said to be a \textit{wave-breaking solution} if $u_{x}$ becomes unbounded as $t\to T$ while $u$ remains bounded. Wave-breaking solutions have attracted a lot of attention over the last 25 years, partly due to the works \cite{CE1998A, CE1998, CE2000}. For example, for the $1$-periodic Camassa-Holm equation, it is proved in \cite{CE2000} that all classical blow-up solutions are wave-breaking solutions, and that the blow-up rate is independent of the initial data and given by
\begin{align*}
\lim_{t\to T} \Big( (T-t) \min_{x\in [0,1)}\{u_{x}(x,t)\} \Big)= -2.
\end{align*}
Moreover, for a large class of odd initial data, it is also proved in \cite{CE2000} that the blow-up set within the interval $[0,1)$ consists of only the two points $\{0,\frac{1}{2}\}$.

 In this work, we prove that a much broader range of blow-up scenarios can happen for the Boussinesq equation. Theorem \ref{thm:main} below proves the existence of unbounded blow-up solutions of \eqref{boussinesq}, while Theorem \ref{thm:wave breaking generalized} shows that for each $q\in \N$, there exist blow-up solutions whose derivatives $\partial_{x}^{q_{1}}\partial_{t}^{q_{2}}u$ are unbounded as $t\to T$ if $q_{1}+2q_{2}=q+1$ but remain bounded if $q_{1}+2q_{2}\leq q$. In particular, for $q=0$, Theorem \ref{thm:wave breaking generalized} establishes the existence of wave-breaking solutions of \eqref{boussinesq}, but this is only one special case among the various possible blow-up scenarios. Theorems \ref{thm:main} and \ref{thm:wave breaking generalized} also show that many different blow-up rates can occur, and that the blow-up set can consist of any singleton.  %Moreover, Theorem \ref{thm:main} proves the existence of blow-up solution . % \eqref{boussinesq}--\eqref{initial data intro}. 

%Let $\mathcal{S}(\R)$ be the Schwartz space of all smooth functions $f$ on $\R$ such that $f$ and all its derivatives have rapid decay as $x \to \pm \infty$. 

%We will use the following definition.

\section{Main results}
We first introduce some notation.

For $r\in \N_{>0}:=\{1,2,\ldots\}$, let $\log_{r}$ be the $r$-th iterate of the logarithm, i.e.
\begin{align*}
\log_{1}s = \log s, \qquad \log_{2}s = \log \log s, \qquad \log_{3}s = \log \log \log s, \qquad \mbox{etc}.
\end{align*}
Let $\R_{+}:=[0,+\infty)$. Given parameters $p\in \N$, $\sigma\in \{0,1\}$,
\begin{align*}
& \vec{r} = (r_{1},\ldots,r_{p}) \in \N_{\mathrm{ord}}^{p}, \qquad \mbox{ and } \qquad \vec{a} = (a_{1},\ldots,a_{p}) \in \R_{+}^{p}, 
\end{align*}
where $\N_{\mathrm{ord}}^{p} := \{\vec{r} = (r_{1},\ldots,r_{p})\in (\N_{>0})^{p}: r_{1}<r_{2}<\ldots<r_{p}\}$, we define 
\begin{align}\label{def of LOG}
\mathrm{LOG}(s) = \mathrm{LOG}(s;\vec{r},\vec{a},\sigma) := (-1)^{\sigma}\prod_{j=1}^{p} (\log_{r_{j}} s)^{a_{j}},
\end{align}
with the convention that $\mathrm{LOG}(s) \equiv (-1)^{\sigma}$ if $p=0$.

Let $T>0$. If $f(t)$ and $g(t)$ are two real-valued functions defined for all $t$ such that $T-t>0$ is small enough, then we write $f(t) \asymp  g(t)$ as $t\to T$ if there exists $T_{0} \in (0,T)$ and $c_{1},c_{2} > 0$ such that
\begin{align*}
c_{1}g(t) \leq f(t) \leq c_{2}g(t), \qquad \mbox{for all } t\in [T_{0},T).
\end{align*}

%Our results can roughly be summarized as follows:
%\begin{itemize}
%\item For each $T>0$, $x_{0}\in \R$ and $\delta \in (0,1)$, Theorem \ref{thm:main} establishes the existence of Schwartz class solutions $u$ of \eqref{boussinesq} such that $\max_{x\in \R}|u(x,t)|$ is of order $(T-t)^{-\delta}$ for any $\delta \in (0,1)$.
%\end{itemize}

Our first main result proves the existence of unbounded blow-up solutions of \eqref{boussinesq}. % whose blow-up set consists of only one point $\{x_{0}\}$. %, and which become unbounded with blow-up rate slower than $(T-t)^{-1}$.

\begin{theorem}\label{thm:main}
For each $p\in \N$, $\vec{r} \in \N_{\mathrm{ord}}^{p}$, $\vec{a} \in \R_{+}^{p}$, $\sigma\in \{0,1\}$, $T>0$, $x_{0}\in \R$ and $\delta\in (0,1)$, there exist Schwartz class solutions $u:\R \times [0,T)$ of \eqref{boussinesq} such that
\begin{align}
& u(x_{0},t) \asymp \frac{\mathrm{LOG}(\frac{1}{T-t};\vec{r},\vec{a},\sigma)}{(T-t)^{\delta}}, \qquad \mbox{as } t \to T, \label{asymp for u in thm} \\
& |u(x,t)| \leq C \frac{1+x^{2}}{(x-x_{0})^{2}}, \qquad x\in \R\setminus \{x_{0}\}, \; t\in [0,T), \label{bound for u in thm}
\end{align}
for some constant $C>0$. If $\delta \in (0,\frac{1}{2})$, then \eqref{bound for u in thm} can be replaced by
\begin{align}\label{bound for u in thm bis}
|u(x,t)| \leq C \frac{1+|x|}{|x-x_{0}|}, \qquad x\in \R\setminus \{x_{0}\}, \; t\in [0,T).
\end{align}
\end{theorem}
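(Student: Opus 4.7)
The plan is to construct $u$ via the inverse scattering transform (IST) developed in \cite{CLmain}. Recall that, in that framework, Schwartz class solutions of \eqref{boussinesq} correspond to a collection of scattering data (reflection coefficients, together with possible discrete data), and $u(x,t)$ is reconstructed from the unique solution of a $3\times 3$ matrix Riemann--Hilbert (RH) problem whose jump matrices depend on $(x,t)$ only through explicit oscillating exponentials $e^{\pm\theta_{ij}(k,x,t)}$ built from the Boussinesq phase functions. I would look for solutions with no discrete data, so that the finite-time singularity at $t=T$ comes entirely from a prescribed conormal singularity of the reflection coefficients at a carefully chosen spectral point $k_\star$.

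\textbf{Construction.} I would choose $k_\star$ so that the relevant phase $\theta_{ij}$ has a stationary point at $k_\star$ when $(x,t)=(x_0,T)$, and prescribe reflection coefficients that are Schwartz away from $k_\star$ and behave like
\begin{align*}
(k-k_\star)^{\delta-1}\,\mathrm{LOG}\bigl(|k-k_\star|^{-1};\vec r,\vec a,\sigma\bigr)
\end{align*}
near $k_\star$ (times a smooth cutoff), subject to the symmetry and compatibility relations required by \cite{CLmain} so that the corresponding initial data $(\phi_0,\phi_1)$ is Schwartz. Since $\delta\in(0,1)$, the singular factor is in $L^2_{\rm loc}$, which is what the direct/inverse theory of \cite{CLmain} requires. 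For every $(x,t)\in\R\times[0,T)$ the stationary point of $\theta_{ij}$ is away from $k_\star$, hence the jump matrices are smooth in a neighborhood of the singularity, the RH problem is of small-norm type, and \cite{CLmain} yields a Schwartz class solution $u$ on $\R\times[0,T)$. The maximal time of existence is $T$ because the saddle hits $k_\star$ precisely at $t=T$.

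\textbf{Blow-up rate.} For \eqref{asymp for u in thm}, I would run a nonlinear steepest descent analysis of the RH problem at $(x_0,t)$ as $t\uparrow T$. After the standard conjugations and contour deformations, the reconstruction formula for $u(x_0,t)$ is dominated by an integral of the form
\begin{align*}
\int_{\R} e^{i\tau\psi(k)}\,(k-k_\star)^{\delta-1}\mathrm{LOG}\bigl(|k-k_\star|^{-1};\vec r,\vec a,\sigma\bigr)\,dk,\qquad \tau\asymp(T-t)^{-1},
\end{align*}
with $\psi$ having a nondegenerate saddle at $k_\star$. A Laplace-type evaluation transfers the conormal factor into $\mathrm{LOG}(1/(T-t);\vec r,\vec a,\sigma)/(T-t)^{\delta}$ with an explicit nonzero constant, giving matching upper and lower bounds and hence \eqref{asymp for u in thm}.

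\textbf{Spatial bounds and main obstacle.} For $x\neq x_0$, the phase $\theta_{ij}(\cdot,x,t)$ is non-stationary at $k_\star$ for every $t\in[0,T]$, and repeated integration by parts around $k=k_\star$ converts the conormal factor into a bounded contribution whose size is controlled by negative powers of the phase derivative $\partial_k\theta_{ij}(k_\star,x,t)$, which is bounded below by $|x-x_0|$ times a polynomial in $x$; this produces \eqref{bound for u in thm}. When $\delta\in(0,\tfrac12)$, $(k-k_\star)^{\delta-1}$ is already absolutely integrable over a neighborhood of $k_\star$, so one integration by parts suffices and gives the sharper bound \eqref{bound for u in thm bis}. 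I expect the main obstacle to be the steepest descent in the blow-up step: one must isolate not only the leading power $(T-t)^{-\delta}$ but also the iterated logarithm factor, and verify that the nonlinear corrections (parametrix errors and Cauchy-type remainders) are of strictly lower order than $\mathrm{LOG}(1/(T-t);\vec r,\vec a,\sigma)/(T-t)^\delta$ uniformly in all admissible $(\vec r,\vec a,\sigma)$, including cases where $\mathrm{LOG}$ itself tends to $0$.
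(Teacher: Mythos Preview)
Your outline shares the high-level idea---construct $u$ via the inverse scattering transform with carefully chosen reflection data---but the specific mechanism you propose is not the one the paper uses, and as stated it does not fit the framework you cite.

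First, the scattering data in \cite{CLmain} (properties $(\ref{Theorem2.3itemi})$--$(\ref{Theorem2.3itemv})$) must be $C^\infty$ on their domains; a conormal profile $(k-k_\star)^{\delta-1}$ is not smooth at $k_\star$, so Theorem~\ref{inverseth} does not apply to your data, and the ``$L^2_{\rm loc}$ is enough'' claim is not what that theorem requires. Second, the blow-up time $T$ in \eqref{Tdef} is governed by the Gaussian decay rate of $r_1(1/k)$ as $|k|\to\infty$, not by a moving saddle reaching a fixed finite spectral point. Your picture of ``saddle hits $k_\star$ at $(x_0,T)$'' therefore does not match how $T$ and $x_0$ enter the problem. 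Third, the paper explicitly states that a steepest descent analysis of RH problem~\ref{RHn} for generic data ``seems challenging'' and does \emph{not} attempt one.

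What the paper actually does is more elementary once set up. It restricts to a very special subclass of reflection coefficients (identically zero on $\partial\mathbb{D}$ and on half of $\Gamma_1$, and of the form $r_1(k)=f_1(k)e^{-x_0/(2k)}e^{T/(4k^2)}$ on the other half) for which the jump matrix is triangular enough that the RH solution admits an explicit, absolutely convergent Neumann series (Proposition~\ref{prop:n3 as a series via volterra}). Both $T$ and $x_0$ are inserted directly into $r_1$ via the Gaussian and oscillatory factors; the remaining amplitude $f_1(i/y)=y^{\eta}\mathrm{LOG}(y)$ with $\eta=-2+2\delta$ is smooth and merely has slow decay at spectral infinity. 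The integral producing $u(x_0,t)$ is then essentially $\int_1^\infty e^{-(T-t)y^2/4}\,y\,f_1(i/y)\,dy$, whose asymptotics as $t\to T$ follow from a direct Laplace-type computation (Lemma~\ref{lemma:precise estimates for u}); the higher Neumann terms are controlled by choosing the $L^1$ norm of $f_1$ small (the parameter $M$). The spatial bounds \eqref{bound for u in thm}--\eqref{bound for u in thm bis} come from integrating by parts against the oscillatory factor $e^{(x-x_0)\omega k_1/2}$ in the series, once for $\delta<\tfrac12$ and twice for $\delta\in(0,1)$, which is the same mechanism you describe but at $k_1\to\infty$ rather than at a finite $k_\star$.

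So the gap is the location and nature of the spectral singularity: it lives at infinity and is encoded by \emph{smooth} data with sub-Gaussian decay, not by a conormal singularity at a finite point; and the analysis is a direct series estimate, not nonlinear steepest descent.
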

\begin{remark}
\eqref{asymp for u in thm}--\eqref{bound for u in thm bis} imply that as $t$ approaches $T$, the solutions $u(x,t)$ of Theorem \ref{thm:main} blow up near $x_{0}$ while elsewhere they remain bounded. In other words, the blow-up set of $u$ is the singleton $\{(x_{0},T)\}$.
\end{remark}
%\begin{remark}
%We believe the range $\delta\in (0,1)$ is far from optimal, in the sense  
%\end{remark}

Let $q \in \N$. Our next result establishes an analog of Theorem \ref{thm:main} for solutions $u$ whose derivatives $\frac{\partial^{q_{1}}}{(\partial x)^{q_{1}}}\frac{\partial^{q_{2}}}{(\partial t)^{q_{2}}} u$ remain bounded if $q_{1}+2q_{2}\leq q$ but become unbounded in finite time if $q_{1}+2q_{2} = q+1$.

\begin{theorem}\label{thm:wave breaking generalized}
For each $q,p\in \N$, $\vec{r} \in \N_{\mathrm{ord}}^{p}$, $\vec{a} \in \R_{+}^{p}$, $\sigma\in \{0,1\}$, $T>0$, $x_{0}\in \R$ and $\delta\in (0,\frac{1}{2})$, there exist Schwartz class solutions $u:\R \times [0,T)$ of \eqref{boussinesq} such that 
\begin{itemize}
\item[(i)] for each $q_{1},q_{2}\in \N$ with $q_{1}+2q_{2} \leq q$, $\sup_{(x,t)\in \R \times [0,T)} \big| \frac{\partial^{q_{1}}}{(\partial x)^{q_{1}}}\frac{\partial^{q_{2}}}{(\partial t)^{q_{2}}} u(x,t) \big|<\infty$ and
\begin{align*}
\frac{\partial^{q_{1}}}{(\partial x)^{q_{1}}}\frac{\partial^{q_{2}}}{(\partial t)^{q_{2}}} u(x,t) \mbox{ can be continuously extended to } \R \times [0,T],
\end{align*}
\item[(ii)] and for each $q_{1},q_{2}\in \N$ with $q_{1}+2q_{2} = q+1$,
\begin{align}
& (-1)^{\lceil \frac{q_{1}}{2} \rceil}\frac{\partial^{q_{1}}}{(\partial x)^{q_{1}}}\frac{\partial^{q_{2}}}{(\partial t)^{q_{2}}} u(x_{0},t) \asymp \frac{\mathrm{LOG}(\frac{1}{T-t};\vec{r},\vec{a},\sigma)}{(T-t)^{\delta}}, \qquad \mbox{as } t \to T, \label{asymp for u in thm gene} \\
& \bigg| \frac{\partial^{q_{1}}}{(\partial x)^{q_{1}}}\frac{\partial^{q_{2}}}{(\partial t)^{q_{2}}} u(x,t) \bigg| \leq C \frac{1+|x|}{|x-x_{0}|}, \qquad x\in \R\setminus \{x_{0}\}, \; t\in [0,T), \label{bound for der der u in tlm gene}
\end{align}
for some constant $C>0$, where $\lceil x \rceil$ denotes the smallest integer greater than or equal to $x$.
\end{itemize}
\end{theorem}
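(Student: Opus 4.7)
My plan is to deduce Theorem \ref{thm:wave breaking generalized} from Theorem \ref{thm:main} by carefully modifying the scattering data used in the latter's inverse scattering / Riemann--Hilbert based construction, so that the resulting solution is $q+1$ units smoother (in the weighted sense dictated by the Boussinesq dispersion) while still producing the prescribed blow-up, but now at the level of $(q+1)$-th order weighted derivatives rather than $u$ itself. Concretely, the solution $u_{\mathrm{main}}$ provided by Theorem \ref{thm:main} is reconstructed from spectral data $r$ through an oscillatory integral of the form
$$u_{\mathrm{main}}(x,t) = \int_\Gamma r(k)\, e^{\theta(k,x,t)}\, dk + (\text{discrete contributions}),$$
with phase $\theta(k,x,t) = i(kx + \omega(k)t)$, and whose blow-up $\mathrm{LOG}(\tfrac{1}{T-t})/(T-t)^\delta$ originates from a carefully engineered singularity of $r$ near a critical spectral value $k_{0}=k_{0}(x_{0},T)$. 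The idea is to replace $r$ by $\widetilde{r}(k) := (k-k_{1})^{-(q+1)} r(k)$ for a suitable $k_{1}\in\C\setminus\R$, chosen so that $\widetilde{r}$ still lies in the class of admissible scattering data of \cite{CLmain} (preserving analyticity, decay, and the Schwarz reflection symmetry that guarantees reality of $u$), and to let $u$ be the solution reconstructed from $\widetilde{r}$.

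Each $\partial_{x}$ under the integral pulls down a factor of $ik$, while the Boussinesq dispersion relation yields $\partial_{t}\sim i\omega(k)$ with $\omega(k)\sim k^{2}$ at the dominant critical point; this is precisely the origin of the weight $q_{1}+2q_{2}$ of the differential operator $\partial_{x}^{q_{1}}\partial_{t}^{q_{2}}$. Consequently, for $q_{1}+2q_{2}\le q$ the integrand $(ik)^{q_{1}}(i\omega(k))^{q_{2}}\widetilde{r}(k) e^{\theta}$ still carries enough decay in $k$ (inherited from the $(k-k_{1})^{-(q+1)}$ weight) to produce a function that is bounded on $\R\times[0,T)$ and extends continuously to $\R\times[0,T]$ by uniform convergence, giving (i). When $q_{1}+2q_{2}=q+1$, the factor $(ik)^{q_{1}}(i\omega(k))^{q_{2}}$ evaluated near $k_{0}$ essentially cancels the $(k-k_{1})^{-(q+1)}$, and one recovers---up to a universal multiplicative constant $i^{q_{1}}\cdot i^{2q_{2}}\omega(k_{0})^{q_{2}}k_{0}^{q_{1}}$---the same oscillatory integral that produced the blow-up in Theorem \ref{thm:main}. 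Taking real parts (using that $\omega$ and $k_{0}$ are real at the relevant critical point) yields the sign $(-1)^{q_{2}}\cdot i^{q_{1}}|_{\mathrm{Re}} = (-1)^{\lceil q_{1}/2\rceil}$ appearing in \eqref{asymp for u in thm gene}, and the same $\mathrm{LOG}/(T-t)^{\delta}$ profile.

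The pointwise bound \eqref{bound for der der u in tlm gene} is inherited from \eqref{bound for u in thm bis} in Theorem \ref{thm:main} through the same derivative-vs-weight bookkeeping: the factor $(1+|x|)/|x-x_{0}|$ is dictated by the location of the saddle $k_{0}(x,T)$ and by repeated integration by parts in $k$ using the decay of $\widetilde{r}$, neither of which is altered by the modification. The main technical obstacle I foresee is twofold. First, one must verify that $\widetilde{r}$ genuinely lies in the class of admissible scattering data of \cite{CLmain}---that is, the pole at $k=k_{1}$ must be compatible with the prescribed analyticity domains, the Schwarz-type symmetry enforcing reality of $u$, and the absence of spurious singularities that could ruin solvability of the associated Riemann--Hilbert problem; this may force $k_{1}$ to be chosen symmetrically with respect to several involutions and, possibly, to introduce a canceling factor of type $\overline{(k-\overline{k_{1}})}^{-(q+1)}$. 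Second, and more delicately, the cancellation between $(k-k_{1})^{-(q+1)}$ and $(ik)^{q_{1}}(i\omega(k))^{q_{2}}$ that gives (ii) produces off-saddle contributions that must be shown to be lower order uniformly in $t\in[T_{0},T)$, which will likely require a uniform (in $t$) stationary-phase/steepest-descent analysis of the modified integral along the lines of the one already performed for Theorem \ref{thm:main}.
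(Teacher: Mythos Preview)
Your proposal misidentifies both the mechanism behind the blow-up in Theorem~\ref{thm:main} and the correct modification needed to obtain Theorem~\ref{thm:wave breaking generalized}. In the paper's construction the solution is not a single oscillatory integral with a finite critical spectral point $k_{0}$; it is built from a Neumann series (Proposition~\ref{prop:n3 as a series via volterra}) for a modified RH problem, and the blow-up of $u$ as $t\to T$ arises from the \emph{power-law decay rate} of $f_{1}(\tfrac{i}{y})$ as $y\to\infty$, through integrals of the form $\int_{1}^{\infty} e^{-\frac{T-t}{4}y^{2}} y\,|f_{1}(\tfrac{i}{y})|\,dy$. Multiplying the scattering data by $(k-k_{1})^{-(q+1)}$ with $k_{1}\in\C\setminus\R$ off the contour is harmless there: it multiplies the integrand by a bounded nonvanishing smooth factor on $\tilde{\Gamma}$ and leaves the large-$y$ behaviour of $f_{1}(\tfrac{i}{y})$ unchanged, so it produces \emph{no} additional smoothing. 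Your cancellation argument between $(ik)^{q_{1}}(i\omega(k))^{q_{2}}$ and $(k-k_{1})^{-(q+1)}$ cannot work because these factors live at different places in the $k$-plane (the former grows at infinity, the latter is bounded there).

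What the paper actually does is shift the exponent in the decay of $f_{1}$: in Theorem~\ref{thm:main} one takes $f_{1}(\tfrac{i}{y})=y^{\eta}\,\mathrm{LOG}(y)$ with $\eta=-2+2\delta\in(-2,0)$, whereas for Theorem~\ref{thm:wave breaking generalized} one takes $\eta=-q-3+2\delta\in(-q-3,-q-2)$. Since applying $\partial_{x}^{q_{1}}\partial_{t}^{q_{2}}$ to each term $m_{j}^{(1)}$ of the series multiplies the integrand by $\big(\sum_{p}\frac{\omega k_{p}-\frac{1}{\omega k_{p}}}{2}\big)^{q_{1}}\big(\sum_{p}\frac{\frac{1}{(\omega k_{p})^{2}}-(\omega k_{p})^{2}}{4}\big)^{q_{2}}$ (see \eqref{mjp1p der dert}), the effective weight picked up is $|k|^{q_{1}+2q_{2}}$, and the choice of $\eta$ guarantees that $y^{q+1}f_{1}(\tfrac{i}{y})\in L^{1}$ (so derivatives with $q_{1}+2q_{2}\le q$ are bounded and extend continuously, Lemma~\ref{lemma: bound on derivative of mjp1p derxx and dert} and Theorem~\ref{thm:inside the proof bis}) while $y^{q+2}f_{1}(\tfrac{i}{y})\notin L^{1}$ (so derivatives with $q_{1}+2q_{2}=q+1$ blow up at the prescribed rate, Lemma~\ref{lemma:precise estimates for u der}). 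The sign $(-1)^{\lceil q_{1}/2\rceil}$ comes not from ``$i^{q_{1}}|_{\mathrm{Re}}$'' but from $-\re[(-i)^{q_{1}}\omega]$ with $\omega=e^{2\pi i/3}$, and a separate (nontrivial) argument controls the tail $\sum_{j\ge 2}$ of the Neumann series uniformly in $t$ by taking the free parameter $M$ large. Your sketch also does not address this tail control, which is essential since the solution is not a single integral.
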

The case $q=0$ of Theorem \ref{thm:wave breaking generalized} proves the existence of wave-breaking solutions and is worth being stated separately.
\begin{corollary}\label{coro:wave breaking}\emph{(Existence of wave-breaking solutions)}

\noindent For each $p\in \N$, $\sigma\in \{0,1\}$, $\vec{r} \in \N_{\mathrm{ord}}^{p}$, $\vec{a} \in \R_{+}^{p}$, $T>0$, $x_{0}\in \R$ and $\delta\in (0,\frac{1}{2})$, there exist Schwartz class solutions $u:\R \times [0,T)$ of \eqref{boussinesq} such that 
\begin{align*}
& \sup_{(x,t)\in \R \times [0,T)} \big|  u(x,t) \big|<\infty \mbox{ and } u(x,t) \mbox{ can be continuously extended to } \R \times [0,T], \\
& \frac{\partial}{\partial x} u(x_{0},t) \asymp \frac{\mathrm{LOG}(\frac{1}{T-t};\vec{r},\vec{a},\sigma)}{(T-t)^{\delta}}, \qquad \mbox{as } t \to T, \\
& \bigg| \frac{\partial}{\partial x} u(x,t) \bigg| \leq C \frac{1+|x|}{|x-x_{0}|}, \hspace{1.9cm} x\in \R\setminus \{x_{0}\}, \; t\in [0,T),
\end{align*}
for some constant $C>0$.
\end{corollary}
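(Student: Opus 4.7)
The plan is to deduce Corollary \ref{coro:wave breaking} as a direct specialization of Theorem \ref{thm:wave breaking generalized} to $q=0$, combined with the elementary sign symmetry built into the function $\mathrm{LOG}$. First I would fix parameters $p$, $\sigma$, $\vec{r}$, $\vec{a}$, $T$, $x_{0}$, $\delta$ as in the statement of the corollary, and apply Theorem \ref{thm:wave breaking generalized} with $q=0$, keeping the same $p$, $\vec{r}$, $\vec{a}$, $T$, $x_{0}$, $\delta$, but with an auxiliary sign parameter $\sigma'\in\{0,1\}$ to be chosen below.

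Next I would unpack what items (i) and (ii) of the theorem say for $q=0$. The inequality $q_{1}+2q_{2}\leq 0$ forces $(q_{1},q_{2})=(0,0)$, so item (i) specializes exactly to the statement that $u$ is bounded on $\R\times[0,T)$ and extends continuously to $\R\times[0,T]$, which is the first conclusion of the corollary. Similarly, $q_{1}+2q_{2}=1$ forces $(q_{1},q_{2})=(1,0)$, so item (ii) gives the bound $|\partial_{x}u(x,t)|\leq C(1+|x|)/|x-x_{0}|$ together with the asymptotic relation
\begin{align*}
(-1)^{\lceil 1/2\rceil}\partial_{x}u(x_{0},t)=-\partial_{x}u(x_{0},t)\asymp \frac{\mathrm{LOG}(\frac{1}{T-t};\vec{r},\vec{a},\sigma')}{(T-t)^{\delta}}\qquad\mbox{as }t\to T.
\end{align*}

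To finish, I would absorb the overall minus sign into the parameter $\sigma'$. From the definition \eqref{def of LOG} one has $\mathrm{LOG}(s;\vec{r},\vec{a},1-\sigma')=-\mathrm{LOG}(s;\vec{r},\vec{a},\sigma')$, so choosing $\sigma'=1-\sigma$ converts the displayed asymptotic into $\partial_{x}u(x_{0},t)\asymp \mathrm{LOG}(\frac{1}{T-t};\vec{r},\vec{a},\sigma)/(T-t)^{\delta}$, matching the corollary. Since the range of the auxiliary parameter $\sigma'$ in the theorem is all of $\{0,1\}$, this substitution is legitimate for either value of $\sigma$.

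There is essentially no obstacle here: the entire content of the corollary lies in Theorem \ref{thm:wave breaking generalized}, and the deduction is purely a matter of specialization plus sign bookkeeping via $\sigma\mapsto 1-\sigma$. The only care required is to verify that $(-1)^{\lceil q_{1}/2\rceil}$ equals $-1$ for $q_{1}=1$, which is immediate, and to note that no condition of the theorem is lost or weakened in the reduction (in particular, $\delta\in(0,\tfrac{1}{2})$ is imposed in both statements).
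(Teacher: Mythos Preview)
Your proposal is correct and matches the paper's approach exactly: the paper simply declares that the corollary is the case $q=0$ of Theorem~\ref{thm:wave breaking generalized} and does not provide a separate proof. Your additional sign bookkeeping via $\sigma'=1-\sigma$ to absorb the factor $(-1)^{\lceil 1/2\rceil}=-1$ is a detail the paper leaves implicit, but it is correct and welcome.
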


%Partly due to this ill-posedness, the mathematical literature on \eqref{boussinesq} is limited: we refer to \cite{H1973, B1976, BZ2002} for results on soliton solutions, to \cite{Z1974} for a Lax pair. Until recently \cite{CLmain}, the problem of obtaining long-time asymptotics for the solution of \eqref{boussinesq} was still open, see e.g. Deift's list of open problems in \cite{D2008}.

%In \cite{CLmain}, we developed an inverse scattering approach to the initial value problem for \eqref{boussinesq}. Among other things, it is proved in \cite{CLmain} that for each $T>0$, there exist Schwartz class solutions to the Boussinesq which blow-up exactly at time $T$. In this work, we refine this result and prove the following.

\subsection*{Conclusion and open problems.}
 In this paper, we proved some existence and asymptotic results for blow-up solutions of the bad Boussinesq equation. Further properties of blow-up solutions of \eqref{boussinesq} are worth being investigated. For instance:
\begin{enumerate}
\item \medskip By \eqref{asymp for u in thm}--\eqref{bound for u in thm}, the solutions $u$ of Theorem \ref{thm:main} are unbounded only near one point (namely $(x_{0},T)$). This brings the following question:

\medskip \noindent \textit{Given $T>0$, do there exist solutions $u(x,t)$ on $\R \times [0,T)$ such that $u(x,t)$ is unbounded near more than just one point?}

\medskip \noindent Similarly, in Theorem \ref{thm:wave breaking generalized}, by \eqref{asymp for u in thm gene}--\eqref{bound for der der u in tlm gene} the higher order derivatives of $u$ are unbounded only near $(x_{0},T)$. Therefore we ask the following:

\medskip \noindent \textit{Given $T>0$ and $q\in \N$, do there exist solutions $u(x,t)$ on $\R \times [0,T)$ such that
\begin{itemize}
\item[(i)] for each $q_{1},q_{2}\in \N$ with $q_{1}+2q_{2} \leq q$, $\sup_{(x,t)\in \R \times [0,T)} \big| \frac{\partial^{q_{1}}}{(\partial x)^{q_{1}}}\frac{\partial^{q_{2}}}{(\partial t)^{q_{2}}} u(x,t) \big|<\infty$,
\item[(ii)] for at least one pair $(q_{1},q_{2})\in \N^{2}$ with $q_{1}+2q_{2} = q+1$, $\frac{\partial^{q_{1}}}{(\partial x)^{q_{1}}}\frac{\partial^{q_{2}}}{(\partial t)^{q_{2}}} u(x,t)$ is unbounded near more than just one point?
\end{itemize}}
\item \medskip It would be interesting to explore whether different types of blow-up rates than the ones stated in \eqref{asymp for u in thm} and \eqref{asymp for u in thm gene} can occur. In particular, one can ask whether the ranges $\delta \in (0,1)$ in Theorem \ref{thm:main} and $\delta \in (0,\frac{1}{2})$ in Theorem \ref{thm:wave breaking generalized} are optimal:

\medskip \noindent 
\textit{Given $T>0$ and $x_{0}\in \R$, do there exist solutions $u:\R\times [0,T)$ such that $|u(x_{0},t)|\asymp (T-t)^{-\delta}$ as $t\to T$ for some $\delta\geq 1$?}

\medskip \noindent and similarly

\medskip \noindent 
\textit{Given $T>0$ and $q\in \N$, do there exist solutions $u:\R\times [0,T)$ such that 
\begin{itemize}
\item[(i)] for each $q_{1},q_{2}\in \N$ with $q_{1}+2q_{2} \leq q$, $\sup_{(x,t)\in \R \times [0,T)} \big| \frac{\partial^{q_{1}}}{(\partial x)^{q_{1}}}\frac{\partial^{q_{2}}}{(\partial t)^{q_{2}}} u(x,t) \big|<\infty$,
\item[(ii)] for at least one pair $(q_{1},q_{2})\in \N^{2}$ with $q_{1}+2q_{2} = q+1$, $|\frac{\partial^{q_{1}}}{(\partial x)^{q_{1}}}\frac{\partial^{q_{2}}}{(\partial t)^{q_{2}}} u(x,t)| \asymp (T-t)^{-\delta}$ for some $\delta \geq \frac{1}{2}$?
\end{itemize}
}
\medskip \noindent We believe that the answer to the first question is ``yes", and that the answer to the second question is ``no" if $\delta > \frac{1}{2}$, see Subsection \ref{subsection:strategy} for a heuristic discussion.
\item \medskip The solutions $u$ that we construct to prove Theorem \ref{thm:main} correspond to special choices of initial data $\{u_{0},u_{1}\}$. Hence we ask the following:

\medskip \noindent \textit{Let $\{u_{0},u_{1}\}$ be generic real-valued initial data in the Schwartz class (i.e.\ smooth with fast decay at $x=\pm\infty$). The maximal existence time $T>0$ of the solution $u$ to the initial value problem for \eqref{boussinesq} is explicitly given in terms of $\{u_{0},u_{1}\}$ in \cite[Theorem 2.8]{CLmain}. If $T$ is finite, what is the behavior of $u(x,t)$ as $t\to T$?}

%\medskip \noindent Let $q\in \N$ and $T\in (0,\infty)$. Similarly, among the real-valued Schwartz class initial data $\{u(x,0),$ $u_{t}(x,0)\}$ such that $u(x,t)$ has existence time $T$ and satisfies
%\begin{align}\label{lol45}
%\sup_{(x,t)\in \R \times [0,T)} \bigg| \frac{\partial^{q_{1}}}{(\partial x)^{q_{1}}}\frac{\partial^{q_{2}}}{(\partial t)^{q_{2}}} u(x,t) \bigg|<\infty \quad \mbox{for all } q_{1},q_{2}\in \N, \; q_{1}+2q_{2} \leq q,
%\end{align}
%the solutions $u(x,t)$ that we construct to prove Theorem \ref{thm:wave breaking generalized} correspond to non-generic initial data. We thus ask the following question:
%
%\medskip \noindent \textit{Given real-valued Schwartz class initial data $\{u(x,0),u_{t}(x,0)\}$, i.e. smooth initial data with fast decay at $x=\pm\infty$, the maximal existence time $T>0$ of the solution $u(x,t)$ was established in \cite[Theorem 2.8]{CLmain}. If $T$ is finite, what is the generic behavior of $u(x,t)$ as $t\to T$?}
\end{enumerate}

\subsection*{Outline.} The proofs of Theorems \ref{thm:main} and \ref{thm:wave breaking generalized} are based on a modified version of the inverse scattering transform introduced in \cite{CLmain}.

The result from \cite{CLmain} that is relevant for us can roughly be summarized as follows (for more details, see Section \ref{section:inverse scattering}). Given some scattering data, denoted $r_{1}$ and $r_{2}$, consider the Riemann-Hilbert (RH) problem \ref{RHn}. The solution to this RH problem exists, is unique, is $1\times 3$ vector-valued, and is denoted by
\begin{align*}
n(x,t,k)=\big( n_{1}(x,t,k),n_{2}(x,t,k),n_{3}(x,t,k) \big), \qquad x\in \R, \; t\in [0,T), \; k\in \C\setminus \Gamma,
\end{align*}
where the contour $\Gamma$ is shown in Figure \ref{fig: Dn}. Then $u(x,t)$ defined by
\begin{align*}
u(x,t) = -i\sqrt{3}\frac{\partial}{\partial x}\bigg(  \lim_{k\to \infty} k (n_{3}(x,t,k) -1) \bigg)
\end{align*}
is a Schwartz class solution of (\ref{boussinesq}) on $\R \times [0,T)$, where $T>0$ is the maximal existence time of the solution and is explicitly given in terms of $r_{1}$ via \eqref{Tdef}. 

For generic scattering data, it seems particularly challenging to perform an asymptotic analysis of $n(x,t,k)$ as $t\to T$. (By generic scattering data, we mean functions $r_{1},r_{2}$ satisfying properties $(\ref{Theorem2.3itemi}$)--$(\ref{Theorem2.3itemv})$ below.) 

In Section \ref{section:inverse scattering}, we modify the above inverse scattering result from \cite{CLmain} to handle particular (non-generic) scattering data satisfying \eqref{assumption r1=r2=0} below. The advantage of considering these particular scattering data is that, as we show in Section \ref{section:RH analysis}, quite remarkably $n(x,t,k)$ admits an explicit series representation. Sections \ref{section:proof1} and \ref{section:proof2} are devoted to completing the proofs of Theorems \ref{thm:main} and \ref{thm:wave breaking generalized}, respectively. This is achieved through a detailed analysis of the series representation for $n(x,t,k)$ for several choices of $r_{1}$.

\section{Inverse scattering}\label{section:inverse scattering}

In Subsection \ref{subsection:background}, we briefly recall the inverse scattering transform from \cite{CLmain}. In Subsection \ref{subsection:strategy}, we provide a heuristic discussion as to why the range $\delta \in (0,\frac{1}{2})$ in Theorem \ref{thm:wave breaking generalized} is believed to be optimal while the range $\delta \in (0,1)$ in Theorem \ref{thm:main} is not believed to be optimal. In Subsection \ref{subsection:modified inverse scattering}, we present an analog of the inverse scattering transform from \cite{CLmain}. This ``modified" inverse scattering transform allows to treat scattering data satisfying \eqref{assumption r1=r2=0} and will be used in Sections \ref{section:RH analysis}, \ref{section:proof1} and \ref{section:proof2} to prove Theorems \ref{thm:main} and \ref{thm:wave breaking generalized}. 

\subsection{Background from \cite{CLmain}}\label{subsection:background}

The inverse scattering transform from \cite{CLmain} is formulated in terms of a $1\times 3$ RH problem. The solution to this RH problem is denoted $n$ and is analytic in $\C \setminus \Gamma$, where $\Gamma := \cup_{j=1}^6 \Gamma_j \cup \partial \D \cup \{0\}$ is the contour shown and oriented as in Figure \ref{fig: Dn}, $\partial \D$ is the unit circle, and
\begin{align*}
& \Gamma_{j} = e^{\frac{\pi i(j-1)}{3}}\big( (0,i)\cup (-i,-i\infty) \big), & &  j=1,\ldots,6.
\end{align*}
Here and below, given $a,b\in  \C\cup \{e^{i\theta}\infty:\theta\in [0,2\pi)\}$, we write $(a,b)$ for the open segment oriented from $a$ to $b$. The contour $\Gamma$ can also be rewritten as $\Gamma = \cup_{j=1}^9 \Gamma_j \cup \Gamma_{\star}$, where 
\begin{align*}
& \Gamma_{j} = e^{\frac{\pi i(j-1)}{3}}\{e^{i\theta}:\theta \in (-\tfrac{\pi}{2},-\tfrac{\pi}{6}) \cup (\tfrac{\pi}{2},\tfrac{5\pi}{6})\}, & & j=7,8,9,
\end{align*}
and where $\Gamma_{\star}$ denotes the set of intersection points of $\Gamma$, namely 
\begin{align*}
\Gamma_{\star} := \{i\kappa_j\}_{j=1}^6 \cup \{0\}, \qquad \kappa_{j} := e^{\frac{\pi i(j-1)}{3}}, \qquad j=1,\ldots,6.
\end{align*}
Note that $\partial \mathbb{D}=\overline{\Gamma_{7}}\cup \overline{\Gamma_{8}} \cup \overline{\Gamma_{9}}$.
We now state the RH problem for $n$.
\begin{RHproblem}[RH problem for $n$ from \cite{CLmain}]\label{RHn}
Find a $1 \times 3$-row-vector valued function $n(x,t,k)$ with the following properties:
\begin{enumerate}[$(a)$]
\item\label{RHnitema} $n(x,t,\cdot) : \C \setminus \Gamma \to \mathbb{C}^{1 \times 3}$ is analytic.

\item\label{RHnitemb} The limits of $n(x,t,k)$ as $k$ approaches $\Gamma \setminus \Gamma_\star$ from the left and right exist, are continuous on $\Gamma \setminus \Gamma_\star$, and are denoted by $n_+$ and $n_-$, respectively. Furthermore, 
\begin{align}\label{njump}
  n_+(x,t,k) = n_-(x, t, k) v(x, t, k) \qquad \text{for} \quad k \in \Gamma \setminus \Gamma_\star,
\end{align}
where $v(x,t,\cdot):\cup_{j=1}^9 \Gamma_j\to \C^{3\times 3}$ is defined in \eqref{vdef} below.
\item\label{RHnitemc} $n(x,t,k) = O(1)$ as $k \to k_{\star} \in \Gamma_\star$.

\item\label{RHnitemd} For $k \in \C \setminus \Gamma$, $n$ obeys the symmetries
\begin{align}\label{nsymm}
n(x,t,k) = n(x,t,\omega k)\mathcal{A}^{-1} = n(x,t,k^{-1}) \mathcal{B},
\end{align}
where $\omega := \smash{e^{\frac{2\pi i}{3}}}$,
\begin{align}\label{def of Acal and Bcal}
\mathcal{A} := \begin{pmatrix}
0 & 0 & 1 \\
1 & 0 & 0 \\
0 & 1 & 0
\end{pmatrix} \qquad \mbox{ and } \qquad \mathcal{B} := \begin{pmatrix}
0 & 1 & 0 \\
1 & 0 & 0 \\
0 & 0 & 1
\end{pmatrix}.
\end{align}
\item\label{RHniteme} $n(x,t,k) = (1,1,1) + O(k^{-1})$ as $k \to \infty$.
\end{enumerate}
\end{RHproblem}
We now give the definition of the jump matrix $v$ appearing in condition (b) of RH problem \ref{RHn}.  Define $\{l_j(k), z_j(k)\}_{j=1}^3$ by
\begin{align}\label{lmexpressions intro}
& l_{j}(k) = i \frac{\omega^{j}k + (\omega^{j}k)^{-1}}{2\sqrt{3}}, \qquad z_{j}(k) = i \frac{(\omega^{j}k)^{2} + (\omega^{j}k)^{-2}}{4\sqrt{3}}, \qquad k \in \C\setminus \{0\},
\end{align}
and let $\{\theta_{\ell j}(x,t, k)\}_{1 \leq j<\ell  \leq 3}$ be given by
\begin{align}\label{def of Phi ij}
\theta_{\ell j}(x,t,k) = (l_{\ell}(k)-l_{j}(k))x + (z_{\ell }(k)-z_{j}(k))t.
\end{align}
%Let $\mathcal{Q} := \{\kappa_{j}\}_{j=1}^{6}$, $\hat{\mathcal{Q}} := \mathcal{Q} \cup \{0\}$, and 
Define $\hat{\Gamma}_{j} := \Gamma_{j} \cup \partial \D$ for $j=1,\ldots,6$, and let $r_1:\hat{\Gamma}_{1} \to \C$ and $r_2: \hat{\Gamma}_{4}\setminus \{\omega^{2},-\omega^{2}\} \to \C$ be functions satisfying the following properties:
\begin{enumerate}[$(i)$]
 \item \label{Theorem2.3itemi}
$r_1 \in C^\infty(\hat{\Gamma}_{1})$ and $r_2 \in C^\infty(\hat{\Gamma}_{4}\setminus \{\omega^{2}, -\omega^{2}\})$.

\item \label{Theorem2.3itemii}
$r_{1}(k)$ is bounded on $\partial \D$, and $r_{1}(\kappa_{j})\neq 0$ for $j=1,\ldots,6$. $r_{2}(k)$ has simple poles at $k=\omega^2$ and $k = -\omega^2$, and simple zeros at $k=\omega$ and $k=-\omega$. Furthermore,
\begin{align}\label{r1r2at0}
r_{1}(1) = r_{1}(-1) = 1, \qquad r_{2}(1) = r_{2}(-1) = -1.
\end{align}

\item \label{Theorem2.3itemiii}
$r_1(k)$ and $r_2(k)$ are rapidly decreasing as $|k| \to \infty$: for each $N \in \N$,
\begin{align}\label{r1r2rapiddecay}
& \max_{j=0,1,\dots,N}\sup_{k \in \Gamma_{1}} (1+|k|)^N |\partial_k^jr_1(k)| < \infty,  
\qquad
 \max_{j=0,1,\dots,N} \sup_{k \in \Gamma_{4}} (1+|k|)^N|\partial_k^jr_2(k)| < \infty.
\end{align}

\item \label{Theorem2.3itemiv}
For all $k \in \partial \D \setminus \{-\omega,\omega\}$, we have
\begin{align}\label{r1r2 relation on the unit circle}
r_{1}(\tfrac{1}{\omega k}) + r_{2}(\omega k) + r_{1}(\omega^{2} k) r_{2}(\tfrac{1}{k}) = 0.
\end{align}

\item \label{Theorem2.3itemv}
$r_1$ and $r_2$ are related by
\begin{align}\label{r1r2 relation with kbar symmetry}
%& r_{1}(k) = \frac{k^{2}-\omega}{\omega k^{2}-1}\overline{r_{2}\big(\bar{k}^{-1}\big)}, & & \mbox{for all } k \in \hat{\Gamma}_{1}\setminus \hat{\mathcal{Q}}, \label{lol1} \\
& r_{2}(k) = \tilde{r}(k) \overline{r_{1}(\bar{k}^{-1})} & & \mbox{for all } k \in \hat{\Gamma}_{4}\setminus \{\omega^{2}, -\omega^{2}\}, % \label{lol2} 
\end{align}
where
\begin{align}\label{def of tilde r}
\tilde{r}(k):=\frac{\omega^{2}-k^{2}}{1-\omega^{2}k^{2}} & & \mbox{for } k \in \mathbb{C}\setminus \{\omega^{2},-\omega^{2}\}.
\end{align}

%\eqref{lol1} and \eqref{lol2} are equivalent.

\end{enumerate} 
\begin{figure}
\begin{center}
\begin{tikzpicture}[scale=0.7]
\node at (0,0) {};
\draw[black,line width=0.45 mm,->-=0.4,->-=0.85] (0,0)--(30:4);
\draw[black,line width=0.45 mm,->-=0.4,->-=0.85] (0,0)--(90:4);
\draw[black,line width=0.45 mm,->-=0.4,->-=0.85] (0,0)--(150:4);
\draw[black,line width=0.45 mm,->-=0.4,->-=0.85] (0,0)--(-30:4);
\draw[black,line width=0.45 mm,->-=0.4,->-=0.85] (0,0)--(-90:4);
\draw[black,line width=0.45 mm,->-=0.4,->-=0.85] (0,0)--(-150:4);

\draw[black,line width=0.45 mm] ([shift=(-180:2.5cm)]0,0) arc (-180:180:2.5cm);
\draw[black,arrows={-Triangle[length=0.2cm,width=0.18cm]}]
($(3:2.5)$) --  ++(90:0.001);
\draw[black,arrows={-Triangle[length=0.2cm,width=0.18cm]}]
($(57:2.5)$) --  ++(-30:0.001);
\draw[black,arrows={-Triangle[length=0.2cm,width=0.18cm]}]
($(123:2.5)$) --  ++(210:0.001);
\draw[black,arrows={-Triangle[length=0.2cm,width=0.18cm]}]
($(177:2.5)$) --  ++(90:0.001);
\draw[black,arrows={-Triangle[length=0.2cm,width=0.18cm]}]
($(243:2.5)$) --  ++(330:0.001);
\draw[black,arrows={-Triangle[length=0.2cm,width=0.18cm]}]
($(297:2.5)$) --  ++(210:0.001);

\draw[black,line width=0.15 mm] ([shift=(-30:0.55cm)]0,0) arc (-30:30:0.55cm);

\node at (0.8,0) {$\tiny \frac{\pi}{3}$};

\node at (-1:2.9) {\footnotesize $\Gamma_8$};
\node at (60:2.9) {\footnotesize $\Gamma_9$};
\node at (120:2.9) {\footnotesize $\Gamma_7$};
\node at (181:2.9) {\footnotesize $\Gamma_8$};
\node at (240:2.83) {\footnotesize $\Gamma_9$};
\node at (300:2.83) {\footnotesize $\Gamma_7$};

\node at (105:1.45) {\footnotesize $\Gamma_1$};
\node at (138:1.45) {\footnotesize $\Gamma_2$};
\node at (223:1.45) {\footnotesize $\Gamma_3$};
\node at (-104:1.45) {\footnotesize $\Gamma_4$};
\node at (-42:1.45) {\footnotesize $\Gamma_5$};
\node at (43:1.45) {\footnotesize $\Gamma_6$};

\node at (97:3.3) {\footnotesize $\Gamma_4$};
\node at (144:3.3) {\footnotesize $\Gamma_5$};
\node at (217:3.3) {\footnotesize $\Gamma_6$};
\node at (-96:3.3) {\footnotesize $\Gamma_1$};
\node at (-35:3.3) {\footnotesize $\Gamma_2$};
\node at (36:3.3) {\footnotesize $\Gamma_3$};
\end{tikzpicture}
\end{center}
\begin{figuretext}\label{fig: Dn}
The contour $\Gamma = \cup_{j=1}^9 \Gamma_j \cup \Gamma_{\star}$ in the complex $k$-plane.
\end{figuretext}
\end{figure}
For $x\in \R$, $t \geq 0$, and $k \in \Gamma\setminus \Gamma_\star$, the jump matrix $v(x,t,k)$ appearing in \eqref{njump} is defined by
\begin{align}
&  v_1 =   \begin{pmatrix}  
 1 & - r_1(k)e^{-\theta_{21}} & 0 \\
  r_1(\frac{1}{k})e^{\theta_{21}} & 1 - r_1(k)r_1(\frac{1}{k}) & 0 \\
  0 & 0 & 1
  \end{pmatrix}\hspace{-0.1cm},
	\quad  v_2 = \begin{pmatrix}   
 1 & 0 & 0 \\
 0 & 1 - r_2(\omega k)r_2(\frac{1}{\omega k}) & -r_2(\frac{1}{\omega k})e^{-\theta_{32}} \\
 0 & r_2(\omega k)e^{\theta_{32}} & 1 
    \end{pmatrix}\hspace{-0.1cm},
   	\nonumber \\ \nonumber
  &v_3 = \begin{pmatrix} 
 1 - r_1(\omega^2 k)r_1(\frac{1}{\omega^2 k}) & 0 & r_1(\frac{1}{\omega^2 k})e^{-\theta_{31}} \\
 0 & 1 & 0 \\
 -r_1(\omega^2 k)e^{\theta_{31}} & 0 & 1  
  \end{pmatrix}\hspace{-0.1cm},
	\quad   v_4 = \begin{pmatrix}  
  1 - r_2(k)r_{2}(\frac{1}{k}) & -r_2(\frac{1}{k}) e^{-\theta_{21}} & 0 \\
  r_2(k)e^{\theta_{21}} & 1 & 0 \\
  0 & 0 & 1
   \end{pmatrix}\hspace{-0.1cm},
   	\\ \nonumber
&  v_5 = \begin{pmatrix}
  1 & 0 & 0 \\
  0 & 1 & -r_1(\omega k)e^{-\theta_{32}} \\
  0 & r_1(\frac{1}{\omega k})e^{\theta_{32}} & 1 - r_1(\omega k)r_1(\frac{1}{\omega k}) 
  \end{pmatrix}\hspace{-0.1cm},
	\;\;   v_6 = \begin{pmatrix} 
  1 & 0 & r_2(\omega^2 k)e^{-\theta_{31}} \\
  0 & 1 & 0 \\
  -r_2(\frac{1}{\omega^2 k})e^{\theta_{31}} & 0 & 1 - r_2(\omega^2 k)r_2(\frac{1}{\omega^2 k})
   \end{pmatrix}\hspace{-0.1cm}, \nonumber \\
& v_{7} = \begin{pmatrix}
1 & -r_{1}(k)e^{-\theta_{21}} & r_{2}(\omega^{2}k)e^{-\theta_{31}} \\
-r_{2}(k)e^{\theta_{21}} & 1+r_{1}(k)r_{2}(k) & \big(r_{2}(\frac{1}{\omega k})-r_{2}(k)r_{2}(\omega^{2}k)\big)e^{-\theta_{32}} \\
r_{1}(\omega^{2}k)e^{\theta_{31}} & \big(r_{1}(\frac{1}{\omega k})-r_{1}(k)r_{1}(\omega^{2}k)\big)e^{\theta_{32}} & f(\omega^{2}k)
\end{pmatrix}\hspace{-0.1cm}, \nonumber \\
& v_{8} = \begin{pmatrix}
f(k) & r_{1}(k)e^{-\theta_{21}} & \big(r_{1}(\frac{1}{\omega^{2} k})-r_{1}(k)r_{1}(\omega k)\big)e^{-\theta_{31}} \\
r_{2}(k)e^{\theta_{21}} & 1 & -r_{1}(\omega k) e^{-\theta_{32}} \\
\big( r_{2}(\frac{1}{\omega^{2}k})-r_{2}(\omega k)r_{2}(k) \big)e^{\theta_{31}} & -r_{2}(\omega k) e^{\theta_{32}} & 1+r_{1}(\omega k)r_{2}(\omega k)
\end{pmatrix}\hspace{-0.1cm}, \nonumber \\
& v_{9} = \begin{pmatrix}
1+r_{1}(\omega^{2}k)r_{2}(\omega^{2}k) & \big( r_{2}(\frac{1}{k})-r_{2}(\omega k)r_{2}(\omega^{2} k) \big)e^{-\theta_{21}} & -r_{2}(\omega^{2}k)e^{-\theta_{31}} \\
\big(r_{1}(\frac{1}{k})-r_{1}(\omega k) r_{1}(\omega^{2} k)\big)e^{\theta_{21}} & f(\omega k) & r_{1}(\omega k)e^{-\theta_{32}} \\
-r_{1}(\omega^{2}k)e^{\theta_{31}} & r_{2}(\omega k) e^{\theta_{32}} & 1
\end{pmatrix}\hspace{-0.1cm}, \label{vdef}
\end{align}
where the matrices $v_j$ are the restrictions of $v$ to $\Gamma_{j}$, $j=1,\ldots,9$, and 
\begin{align}\label{def of f}
f(k) := 1+r_{1}(k)r_{2}(k) + r_{1}(\tfrac{1}{\omega^{2}k})r_{2}(\tfrac{1}{\omega^{2}k}), \qquad k \in \partial \mathbb{D}.
%= (1-r_{1}(\omega^{2}k)r_{1}(\tfrac{1}{\omega^{2}k}))(1-r_{2}(k)r_{2}(\tfrac{1}{k})).
\end{align}
We now state the following result from \cite{CLmain}.
\begin{theorem}\emph{(\cite[Theorem 2.6]{CLmain})}\label{inverseth}
Let $r_1:\hat{\Gamma}_{1} \to \C$ and $r_2: \hat{\Gamma}_{4}\setminus \{\omega^{2},-\omega^{2}\} \to \C$ be two functions satisfying properties $(\ref{Theorem2.3itemi})$--$(\ref{Theorem2.3itemv})$.
Define $T \in (0, \infty]$ by 
\begin{align}\label{Tdef}
T := \sup \big\{t \geq 0 \, | \, \text{$e^{\frac{|k|^2t}{4}}r_1(1/k)$ and its derivatives are rapidly decreasing as $\Gamma_1 \ni k \to \infty$}\big\}.
\end{align}
Then RH problem \ref{RHn} has a unique solution $n(x,t,k)$ for each $(x,t) \in \R \times [0,T)$ and the function $n_{3}^{(1)}$ defined by
$$n_{3}^{(1)}(x,t) := \lim_{k\to \infty} k (n_{3}(x,t,k) -1)$$ 
is well-defined and smooth for $(x,t) \in \R \times [0,T)$. Moreover, $u(x,t)$ defined by
\begin{align}\label{recoveruvn}
u(x,t) = -i\sqrt{3}\frac{\partial}{\partial x}n_{3}^{(1)}(x,t),
\end{align}
is a Schwartz class solution of (\ref{boussinesq}) on $\R \times [0,T)$.
\end{theorem}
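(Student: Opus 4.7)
The theorem is quoted verbatim from \cite[Theorem 2.6]{CLmain}, so the plan is to reproduce the three-stage strategy that underlies the original proof.

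First I would establish existence and uniqueness of $n$. I would rewrite RH problem \ref{RHn} as a singular integral equation on $\Gamma$ via the Cauchy operator, after checking that $v$ is smooth on $\Gamma\setminus\Gamma_{\star}$ and admits continuous extensions across the self-intersections $\Gamma_{\star}$. The algebraic identities \eqref{r1r2 relation on the unit circle} and \eqref{r1r2 relation with kbar symmetry}, together with \eqref{r1r2at0}, are precisely what guarantees the cyclic matching of the $v_{j}$ at the six points $\{i\kappa_{j}\}_{j=1}^{6}$ and at $0$, while the condition \eqref{Tdef} ensures that the $L^{2}\cap L^{\infty}$ norm of $v-I$ on $\Gamma$ is controlled uniformly on $\R\times[0,\tau]$ for every $\tau<T$. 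Uniqueness is then obtained from a vanishing lemma: replacing the normalization $(\ref{RHniteme})$ by $O(k^{-1})$ at infinity and forming the scalar $n(x,t,k)\mathcal{J}\, n(x,t,\bar k^{-1})^{*}$ with a constant Hermitian matrix $\mathcal{J}$ compatible with the factorizations of $v$ on $\partial \D$, one integrates around a large circle and exploits the symmetries \eqref{nsymm} to conclude that $n\equiv 0$.

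Next I would verify that $u$ defined by \eqref{recoveruvn} solves \eqref{boussinesq}. The idea is to promote $n$ to the $3\times 3$ matrix $M$ obtained by stacking its symmetric rotates $n(x,t,\omega^{j}k)$, which by \eqref{nsymm} satisfies a $3\times 3$ RH problem whose jump depends on $k$ only through the phases $e^{\theta_{\ell j}}$ from \eqref{def of Phi ij}. Differentiating the integral equation in $x$ and $t$ yields smoothness of $n$ in $(x,t)$ and shows that $M$ obeys a Lax pair $M_{x}=LM$, $M_{t}=ZM$ with $L,Z$ polynomial in $k$ whose coefficients are determined by the first few terms of the $k=\infty$ expansion of $n$, in particular by $n_{3}^{(1)}$. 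The compatibility condition $L_{t}-Z_{x}+[L,Z]=0$ is then equivalent to the statement that $u=-i\sqrt{3}\,\partial_{x}n_{3}^{(1)}$ solves \eqref{boussinesq}.

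The main obstacle is the third stage, namely showing that $u$ is of Schwartz class: $\sup_{x\in\R,\, t\in[0,\tau]}(1+|x|)^{N}|\partial_{x}^{i}u(x,t)|<\infty$ for every $\tau<T$, every $N$, and every $i$. I would carry out a nonlinear steepest descent analysis governed by the signature table of $\re\theta_{\ell j}$ in each sector of the $(x,t)$-plane, opening lenses along the rays $\Gamma_{1},\ldots,\Gamma_{6}$ according to the factorizations of $v$ there and conjugating away the oscillatory phases on $\partial\D$. The rapid decay of $r_{1},r_{2}$ from \eqref{r1r2rapiddecay}, combined with the defining property of $T$ in \eqref{Tdef} that controls $e^{|k|^{2}t/4}r_{1}(1/k)$ on $\Gamma_{1}$, reduces the transformed problem to a small-norm RH problem with estimates uniform in $(x,t)\in \R\times[0,\tau]$. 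Schwartz decay of $n_{3}^{(1)}$, and hence of $u$ and all its $x$-derivatives, then follows by differentiating the reconstruction formula and using the decay of the resolvent.
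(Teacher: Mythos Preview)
The paper does not prove this theorem at all; it is quoted verbatim from \cite[Theorem 2.6]{CLmain} and used as a black box, with the remark before Theorem~\ref{inverseth new} indicating that the full argument lives in \cite[Section 4]{CLmain}. So there is no ``paper's own proof'' to compare against here.

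Your three-stage outline (singular integral reformulation plus vanishing lemma for existence and uniqueness; Lax-pair derivation of \eqref{boussinesq} from the large-$k$ expansion; decay estimates for the Schwartz property) is the standard architecture for such inverse-scattering theorems and is, in broad strokes, what \cite{CLmain} carries out. Two remarks on the sketch itself. First, the vanishing lemma for this particular $3\times 3$ problem is more delicate than your one-line description suggests: the symmetries \eqref{nsymm} and the precise positivity structure induced by \eqref{r1r2 relation with kbar symmetry} on the various pieces of $\Gamma$ have to be combined carefully, and the pairing you write down needs the correct choice of $\mathcal{J}$ and of the contour on which one integrates. Second, invoking a full nonlinear steepest descent analysis for the Schwartz decay is more machinery than is actually needed: the point is that for each fixed $t<T$ the off-diagonal entries of $v-I$ are rapidly decreasing in $|x|$ after appropriate triangular factorizations (this is where \eqref{r1r2rapiddecay} and the definition \eqref{Tdef} enter), which puts the transformed problem directly into the small-norm regime as $|x|\to\infty$ without local parametrices. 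As a plan your outline is sound, but it remains a plan; turning it into a proof is precisely the content of \cite[Section 4]{CLmain}.
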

Scattering data $r_1:\hat{\Gamma}_{1} \to \C$ and $r_2: \hat{\Gamma}_{4}\setminus \{\omega^{2},-\omega^{2}\} \to \C$ satisfying properties $(\ref{Theorem2.3itemi})$--$(\ref{Theorem2.3itemv})$ correspond to generic initial data \cite[Theorem 2.3]{CLmain}.
\subsection{Heuristic discussion}\label{subsection:strategy}
In order to analyze the asymptotic behavior of blow-up solutions of \eqref{boussinesq} with given existence time $T\in (0,\infty)$, one could try to perform a Deift--Zhou \cite{DZ1993} steepest descent analysis of the RH problem \ref{RHn} in the general case
\begin{align}\label{r1 in the form w}
r_{1}(\tfrac{1}{k}) = f_{1}(\tfrac{1}{k})e^{\frac{T}{4}k^{2}} = f_{1}(\tfrac{1}{k})e^{-\frac{T}{4}|k|^{2}} , \qquad k \in \hat{\Gamma}_{1},
\end{align}
where $f_{1}\in C^{\infty}(\hat{\Gamma}_{1})$ is such that $|f_{1}(\frac{1}{k})| \leq e^{|k|^{2-\epsilon}}$ for some $\epsilon >0$ and all large $|k|$. The Deift--Zhou method has been particularly successful to analyze the long-time behavior of global solutions of many integrable equations, see e.g. \cite{BJM2018, BKST2009, CF2022, CLmain, CL V, CL IV, CL I, CLsolitonResolution, CLWasymptotics, DZ1993, GLL2024, GM2020, GT2009, HWZ2024, HXF2015, LGWW2019, RS2019, XF2020, YF2023, ZW2024}, but to our knowledge it has not yet been used to analyze the behavior of blow-up solutions. In our case, performing an asymptotic analysis on the RH problem \ref{RHn} seems challenging for general scattering data of the form \eqref{r1 in the form w}.% because, among other things, all entries of the jump matrices $v_{7}$, $v_{8}$ and $v_{9}$ remain of order $1$ as $t\to T$. 

The fact that the solution $n$ to RH problem \ref{RHn} ceases to exist after time $T$ is due to certain ``bad" entries of $v$ on $\cup_{j=1}^{6}\Gamma_{j}$ which are unbounded if $t>T$, such as the $21$-entry of $v_{1}$ on $\Gamma_{1}\cap \{k:|k|>1\}$ and the $12$-entry of $v_{1}$ on $\Gamma_{1}\cap \{k:|k|<1\}$. Indeed, by substituting \eqref{r1 in the form w} and $\theta_{21}(x,t,k) = \frac{x}{2}\big( k-\frac{1}{k} \big) + \frac{t}{4}\big( \frac{1}{k^{2}}-k^{2} \big)$ in the definition \eqref{vdef} of $v_{1}$, we get
\begin{align*}
v_1(x,t,k) =   \begin{pmatrix}  
 1 & - f_{1}(k)e^{\frac{T-t}{4k^{2}}}e^{\frac{x}{2}( \frac{1}{k}-k ) + \frac{t}{4} k^{2}} & 0 \\
  f_{1}(\tfrac{1}{k})e^{\frac{T-t}{4} k^{2}}e^{\frac{x}{2}( k-\frac{1}{k} ) + \frac{t}{4} \frac{1}{k^{2}}} & 1 - f_1(k)f_1(\frac{1}{k})e^{\frac{T}{4}(k^{2}+\frac{1}{k^{2}})} & 0 \\
  0 & 0 & 1
  \end{pmatrix}, \quad k \in \Gamma_{1}.
\end{align*}
For any fixed $x\in \R$ and $t >T$, $(v_{1}(x,t,k))_{21}$ becomes unbounded as $\Gamma_{1} \ni k\to \infty$, which contradicts the asymptotic behavior of $n$ stated in property ($\ref{RHniteme}$) of RH problem \ref{RHn}.

Therefore, one expects that the leading order behavior of $n$ as $t\to T$ should come from these ``bad" entries. To ease the following discussion, it convenient to consider for a moment a simpler RH problem, whose solution is denoted $\tilde{n}$, and which is obtained from RH problem \ref{RHn} by ignoring the symmetries \eqref{nsymm} and by replacing the jump matrix $v$ by $\tilde{v}$, where $\tilde{v}$ is the identity matrix except on $\Gamma_{3}\cap \{z: |z|> 1\}=(e^{\frac{\pi i}{6}},e^{\frac{\pi i}{6}}\infty)$ where it is given by
\begin{align*}
\tilde{v}(x,t,k) =  \begin{pmatrix} 
 1 & 0 & f_{1}(\tfrac{1}{\omega^{2}k}) e^{-\frac{x}{2}\frac{1}{\omega^{2}k}} e^{\frac{t}{4} \frac{1}{(\omega^{2}k)^{2}}} e^{\frac{x}{2} \omega^{2}k} e^{\frac{T-t}{4}(\omega^{2}k)^{2}}  \\
 0 & 1 & 0 \\
 0 & 0 & 1  
  \end{pmatrix}, \quad k\in (e^{\frac{\pi i}{6}},e^{\frac{\pi i}{6}}\infty).
\end{align*}
In other words, all diagonal entries of $v$ have been replaced by $1$, and all off-diagonal entries of $v$ have been set to $0$, except for the ``bad" $13$-entry of $v$ on $\Gamma_{3}\cap \{z: |z|> 1\}$ which remains unchanged.

More precisely, we consider the following RH problem.
\begin{RHproblem}[RH problem for $\tilde{n}$]\label{RHnt}
Find a $1 \times 3$-row-vector valued function $\tilde{n}(x,t,k)$ with the following properties:
\begin{enumerate}[$(a)$]
\item $\tilde{n}(x,t,\cdot) : \C \setminus (e^{\frac{\pi i}{6}},e^{\frac{\pi i}{6}}\infty) \to \mathbb{C}^{1 \times 3}$ is analytic.

\item The limits of $\tilde{n}(x,t,k)$ as $k$ approaches $(e^{\frac{\pi i}{6}},e^{\frac{\pi i}{6}}\infty)$ from the left and right exist, are continuous on $(e^{\frac{\pi i}{6}},e^{\frac{\pi i}{6}}\infty)$, and are denoted by $\tilde{n}_+$ and $\tilde{n}_-$, respectively. Furthermore, 
\begin{align}\label{jumps of nt}
\tilde{n}_+(x,t,k) = \tilde{n}_-(x, t, k) \tilde{v}(x, t, k) \qquad \text{for} \quad k \in (e^{\frac{\pi i}{6}},e^{\frac{\pi i}{6}}\infty).
\end{align}
\item\label{itemc RHnt} $\tilde{n}(x,t,k) = O(1)$ as $k \to e^{\frac{\pi i}{6}}$.
\item $\tilde{n}(x,t,k) = (1,1,1) + O(k^{-1})$ as $k \to \infty$.
\end{enumerate}
\end{RHproblem}
Let $\tilde{u}(x,t) := -i\sqrt{3}\frac{\partial}{\partial x}\tilde{n}_{3}^{(1)}(x,t)$, where $\tilde{n}_{3}^{(1)}(x,t) := \lim_{k\to \infty} k (\tilde{n}_{3}(x,t,k) -1)$. The solution $\tilde{n}(x,t,k)$ of RH problem \ref{RHnt} is too simple to produce a solution to the bad Boussinesq equation, but since $\tilde{v}$ contains one ``bad" entry, heuristically we expect $\tilde{u}(x,t)$ to be of the same order as $u(x,t)$ (defined in \eqref{recoveruvn} with $r_{1},r_{2}$ given by \eqref{r1 in the form w} and \eqref{r1r2 relation with kbar symmetry}) when $t\to T$.

Let us now compute $\tilde{u}(x,t)$ explicitly. By \eqref{jumps of nt}, the first two entries of $\tilde{n}$ are continuous through $(e^{\frac{\pi i}{6}},e^{\frac{\pi i}{6}}\infty)$, and by $(\ref{itemc RHnt})$ they are bounded near $e^{\frac{\pi i}{6}}$; hence by Morera's Theorem and Riemann's theorem on removable singularities these entries are entire. Since each of these two entries behaves as $1+O(k^{-1})$ as $k\to\infty$, by Liouville's theorem they must be identically equal to $1$. Then the third entry of $\tilde{n}$ is given by the Sokhotski–Plemelj formula. Thus for $x\in \R$, $t\in [0,T)$ and $k\in \C\setminus (e^{\frac{\pi i}{6}},e^{\frac{\pi i}{6}}\infty)$, we have
\begin{align*}
\tilde{n}(x,t,k) = \bigg(1, \; 1, \; 1+\frac{1}{2\pi i}\int_{e^{\frac{\pi i}{6}}}^{e^{\frac{\pi i}{6}}\infty} \frac{f_{1}(\tfrac{1}{\omega^{2}k_{1}}) e^{-\frac{x}{2}\frac{1}{\omega^{2}k_{1}}} e^{\frac{t}{4} \frac{1}{(\omega^{2}k_{1})^{2}}} e^{\frac{x}{2} \omega^{2}k_{1}} e^{\frac{T-t}{4}(\omega^{2}k_{1})^{2}} }{k_{1}-k}dk_{1} \bigg).
\end{align*}
Hence $\tilde{n}_{3}(x,t,k) = 1+\frac{\tilde{n}_{3}^{(1)}(x,t)}{k} + O(k^{-2})$ as $k\to +\infty$, where
\begin{align*}
\tilde{n}_{3}^{(1)}(x,t) & = \frac{-1}{2\pi i}\int_{e^{\frac{\pi i}{6}}}^{e^{\frac{\pi i}{6}}\infty} f_{1}(\tfrac{1}{\omega^{2}k_{1}}) e^{-\frac{x}{2}\frac{1}{\omega^{2}k_{1}}} e^{\frac{t}{4} \frac{1}{(\omega^{2}k_{1})^{2}}} e^{\frac{x}{2} \omega^{2}k_{1}} e^{\frac{T-t}{4}(\omega^{2}k_{1})^{2}}dk_{1} \\
& = \frac{\omega}{2\pi}\int_{1}^{+\infty} f_{1}(\tfrac{i}{y}) e^{-\frac{x}{2}\frac{i}{y}} e^{-\frac{t}{4} \frac{1}{y^{2}}} e^{-\frac{x}{2} iy} e^{-\frac{T-t}{4}y^{2}}dy,
\end{align*}
and $\tilde{u}(x,t) := -i\sqrt{3}\frac{\partial}{\partial x}\tilde{n}_{3}^{(1)}(x,t)$ is given for $x\in \R$ and $t\in [0,T)$ by
\begin{align}\label{expr for ut}
\tilde{u}(x,t) = -\frac{\sqrt{3}}{2\pi}\int_{1}^{+\infty} f_{1}(\tfrac{i}{y}) e^{-\frac{x}{2}\frac{i}{y}} e^{-\frac{t}{4} \frac{1}{y^{2}}} \frac{y+\frac{1}{y}}{2} e^{-\frac{x}{2} iy} e^{-\frac{T-t}{4}y^{2}}dy.
\end{align}

%\medskip In view of \eqref{expr for ut}, the leading order behavior of $\tilde{u}(x,t)$ as $t\to T$ should mainly depend on the behavior of 
%\begin{align*}
%f_{1}(\tfrac{i}{y}) e^{-\frac{x}{2}\frac{i}{y}} e^{-\frac{t}{4} \frac{1}{y^{2}}} \frac{y+\frac{1}{y}}{2} e^{-\frac{x}{2} iy}
%\end{align*}
%as $y \to +\infty$. 
\noindent \textbf{Heuristic discussion about the range $\delta \in (0,1)$ in Theorem \ref{thm:main}.} Note that
\begin{align}\label{lol27}
e^{-\frac{x}{2}\frac{i}{y}} e^{-\frac{t}{4} \frac{1}{y^{2}}} \frac{y+\frac{1}{y}}{2} = \frac{y}{2}(1+o(1)) \qquad \mbox{as } y\to +\infty
\end{align}
uniformly for $(x,t)$ in compact subsets of $\R\times [0,T]$. Hence, if $y \mapsto f_{1}(\frac{i}{y})$ tends to $+\infty$ as $y\to + \infty$, we have
\begin{align}
\tilde{u}(0,t) = -\big( 1+o(1) \big)\frac{\sqrt{3}}{2\pi}\int_{1}^{+\infty} f_{1}(\tfrac{i}{y}) \frac{y}{2} e^{-\frac{T-t}{4}y^{2}}dy, \quad \mbox{as } t \to T.
\end{align}
Let $\epsilon \in (0,2)$, and suppose $f_{1}(\frac{i}{y}) = e^{y^{2-\epsilon}}$ for all $y\geq 10$. Since the maximum of $e^{y^{2-\epsilon}} e^{-\frac{T-t}{4}y^{2}}$ is attained at $y_{\star}:=(1-\epsilon/2)^{\frac{1}{\epsilon}} (\frac{4}{T-t})^{\frac{1}{\epsilon}}$,
\begin{align*}
|\tilde{u}(0,t)| \geq \big( 1+o(1) \big)\frac{\sqrt{3}}{2\pi}\int_{y_{\star}-1}^{y_{\star}+1} f_{1}(\tfrac{i}{y}) e^{-\frac{T-t}{4}y^{2}}dy \asymp e^{\frac{\epsilon (1-\frac{\epsilon}{2})^{2/\epsilon}}{2-\epsilon} (\frac{4}{T-t})^{\frac{2-\epsilon}{\epsilon}}}, \quad \mbox{as } t \to T.
\end{align*}
Since we expect $\tilde{u}(x,t)$ to be of the same order as $u(x,t)$ as $t\to T$, we also expect the range $\delta \in (0,1)$ in Theorem \ref{thm:main} to be far from optimal.

\medskip \noindent \textbf{Heuristic discussion about the range $\delta \in (0,\frac{1}{2})$ in Theorem \ref{thm:wave breaking generalized}.} Suppose for simplicity that $f_{1}(\frac{i}{y})\geq 0$ for $y\geq 1$. Then for $\tilde{u}(x,t)$ to admit a continuous extension on $\R \times [0,T]$,  by \eqref{expr for ut} we must have $\int_{1}^{+\infty} yf_{1}(\frac{i}{y})dy<+\infty$. Since
\begin{align*}
\frac{\partial}{\partial x}\tilde{u}(x,t) = \frac{\sqrt{3}i}{2\pi}\int_{1}^{+\infty} f_{1}(\tfrac{i}{y}) e^{-\frac{x}{2}\frac{i}{y}} e^{-\frac{t}{4} \frac{1}{y^{2}}} \bigg(\frac{y+\frac{1}{y}}{2}\bigg)^{2} e^{-\frac{x}{2} iy} e^{-\frac{T-t}{4}y^{2}}dy, \quad x\in \R, \; t\in [0,T),
\end{align*}
the derivative $\frac{\partial}{\partial x}\tilde{u}(0,t)$ blows up as $t\to T$ if $\int_{1}^{+\infty} y^{2} f_{1}(\frac{i}{y})dy = +\infty$. In this case we have
\begin{align*}
\frac{\partial}{\partial x}\tilde{u}(0,t) = \big( 1+o(1) \big)\frac{\sqrt{3}i}{2\pi}\int_{1}^{+\infty} f_{1}(\tfrac{i}{y}) \frac{y^{2}}{4}  e^{-\frac{T-t}{4}y^{2}}dy.
\end{align*}
Let $\eta \in (-3,-2)$, and suppose $f_{1}(\frac{i}{y})=y^{\eta}$ for $y\geq 10$. Then 
\begin{align*}
\bigg|\frac{\partial}{\partial x}\tilde{u}(0,t)\bigg| \asymp \int_{1}^{+\infty} y^{\eta+2}  e^{-\frac{T-t}{4}y^{2}}dy \asymp \frac{1}{(T-t)^{\frac{\eta+3}{2}}}, \qquad \mbox{as } t \to T.
\end{align*}
The above argument can easily be generalized to analyze higher order derivatives of $\tilde{u}$, and supports our believe that the range $\delta \in (0,\frac{1}{2})$ in Theorem \ref{thm:wave breaking generalized} and Corollary \ref{coro:wave breaking} is essentially optimal.

\medskip The main reason why $\tilde{u}:= -i\sqrt{3}\frac{\partial}{\partial x}\tilde{n}_{3}^{(1)}$ is not a solution to \eqref{boussinesq} is because $\tilde{n}$ does not satisfy the symmetries \eqref{nsymm}. (For example, $\tilde{u}$ is not even real-valued, and this is because $\tilde{n}$ does not obey the $\mathcal{B}$-symmetry in \eqref{nsymm}.) %This simplified problem is complex enough to preserve the symmetries \eqref{nsymm} and to construct a blow-up solution for \eqref{boussinesq}.

\subsection{A modified RH problem}\label{subsection:modified inverse scattering}
To prove Theorems \ref{thm:main} and \ref{thm:wave breaking generalized}, instead of considering RH problem \ref{RHn}, we will consider a simplified RH problem, but one which is complex enough to preserve the symmetries \eqref{nsymm}. This simplified RH problem is obtained from RH problem \ref{RHn} by setting
\begin{align}\label{assumption r1=r2=0}
& r_{1}(k) = 0 = r_{2}(k), \qquad \mbox{for all } k \in \partial \D.
\end{align}
Note that this situation is not covered by Theorem \ref{inverseth}, since $(\ref{Theorem2.3itemii})$ is not satisfied. 
%In this section, we provide an analogue of Theorem \ref{inverseth} to handle this case. 
If \eqref{assumption r1=r2=0} holds, then the jump matrices $v_{7}, v_{8}, v_{9}$ in \eqref{vdef} are each identically equal to the identity matrix, and thus, by Morera's theorem, the solution $n$ of RH Problem \ref{RHn} is also analytic in a neighborhood of the unit circle, except possibly near the points $\{i\kappa_j\}_{j=1}^6$.  More precisely, the RH Problem \ref{RHn} becomes as follows.
\begin{RHproblem}[RH problem for $n$ when \eqref{assumption r1=r2=0} holds]\label{RHn modified}
Find a $1 \times 3$-row-vector valued function $n(x,t,k)$ with the following properties:
\begin{enumerate}[$(a)$]
\item\label{RHnitema mod} $n(x,t,\cdot) : \C \setminus (\Gamma_{0}\cup \Gamma_\star) \to \mathbb{C}^{1 \times 3}$ is analytic, where $\Gamma_{0}:= \cup_{j=1}^{6} \Gamma_{j} = \Gamma \setminus \partial \D$.

\item\label{RHnitemb mod} The limits of $n(x,t,k)$ as $k$ approaches $\Gamma_{0}$ from the left and right exist, are continuous on $\Gamma_{0}$, and are denoted by $n_+$ and $n_-$, respectively. Furthermore, 
\begin{align}\label{njump mod}
  n_+(x,t,k) = n_-(x, t, k) v(x, t, k) \qquad \text{for} \quad k \in \Gamma_{0},
\end{align}
and $v$ is defined in \eqref{vdef}.
\item\label{RHnitemc mod} $n(x,t,k) = O(1)$ as $k \to k_{\star} \in \Gamma_\star$.

\item\label{RHnitemd mod} For $k \in \C \setminus \Gamma_{0}$, $n$ obeys the symmetries
\begin{align}\label{nsymm mod}
n(x,t,k) = n(x,t,\omega k)\mathcal{A}^{-1} = n(x,t,k^{-1}) \mathcal{B},
\end{align}
where $\mathcal{A}$ and $\mathcal{B}$ are defined in \eqref{def of Acal and Bcal}.
\item\label{RHniteme mod} $n(x,t,k) = (1,1,1) + O(k^{-1})$ as $k \to \infty$.
\end{enumerate}
\end{RHproblem}
Note that $(\ref{Theorem2.3itemiv})$ is automatically satisfied when \eqref{assumption r1=r2=0} holds. The following result is the analog of Theorem \ref{inverseth} for functions $r_{1},r_{2}$ satisfying \eqref{assumption r1=r2=0}; it shows that the simplified RH problem \ref{RHn modified} still allows to construct solutions of \eqref{boussinesq}. Theorem \ref{inverseth new} can be proved in the same way as in \cite[Section 4]{CLmain}, so we omit the proof.
\begin{theorem}\label{inverseth new}
Let $r_1:\hat{\Gamma}_{1} \to \C$ and $r_2: \hat{\Gamma}_{4}\setminus \{\omega^{2},-\omega^{2}\} \to \C$ be two functions satisfying properties $(\ref{Theorem2.3itemi})$, $(\ref{Theorem2.3itemiii})$, $(\ref{Theorem2.3itemv})$ and \eqref{assumption r1=r2=0}.
Define $T \in (0, \infty]$ by \eqref{Tdef}. Then RH problem \ref{RHn modified} has a unique solution $n(x,t,k)$ for each $(x,t) \in \R \times [0,T)$ and the function $n_{3}^{(1)}$ defined by
\begin{align}\label{def of n3p1p lol}
n_{3}^{(1)}(x,t) := \lim_{k\to \infty} k (n_{3}(x,t,k) -1)
\end{align}
is well-defined and smooth for $(x,t) \in \R \times [0,T)$. Moreover, $u(x,t)$ defined by
\begin{align}\label{recoveruvn 2}
u(x,t) = -i\sqrt{3}\frac{\partial}{\partial x}n_{3}^{(1)}(x,t),
\end{align}
is a Schwartz class solution of (\ref{boussinesq}) on $\R \times [0,T)$.
\end{theorem}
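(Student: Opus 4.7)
The plan is to follow the approach of \cite[Section 4]{CLmain} essentially verbatim, observing that the assumption \eqref{assumption r1=r2=0} in fact simplifies the RH problem without breaking any of the algebraic structure needed for existence, uniqueness, or the subsequent Lax-pair computation. First, I would settle existence and uniqueness of the solution to RH problem \ref{RHn modified}. Because \eqref{assumption r1=r2=0} forces the jump matrices $v_7,v_8,v_9$ in \eqref{vdef} to be identically equal to the identity, Morera's theorem immediately gives that $n$ is analytic across $\partial\D \setminus \Gamma_\star$, so the only genuine jump contour is $\Gamma_0 = \cup_{j=1}^{6}\Gamma_j$, with jumps $v_1,\ldots,v_6$ which still carry the $\omega$- and $k\mapsto k^{-1}$ covariance properties that underpin the symmetries \eqref{nsymm mod}. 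The standard reduction of the $1\times 3$ vector problem to a $2\times 2$ matrix problem on a fundamental domain, followed by the vanishing lemma of \cite[Section 4]{CLmain}, therefore applies without change. At the points of $\Gamma_\star$ the boundedness in $(\ref{RHnitemc mod})$ combined with the symmetries \eqref{nsymm mod} makes all apparent singularities removable by Riemann's theorem, which is what is required to justify contour manipulations near those points.

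Next, smoothness of $n_3^{(1)}$ in $(x,t)\in \R\times [0,T)$ follows from the fact that the $(x,t)$-dependence of the jumps enters only through the analytic phases $e^{\pm \theta_{\ell j}(x,t,k)}$, and that by the choice of $T$ in \eqref{Tdef} these phases produce jumps which are uniformly Schwartz on $\Gamma_0$ for $(x,t)$ in compact subsets of $\R\times [0,T)$. Standard parameter-differentiation of the RH problem, combined with the Cauchy-type estimates used in \cite{CLmain}, then yields smoothness of $n_3^{(1)}(x,t)$ and of $u(x,t) = -i\sqrt{3}\,\partial_x n_3^{(1)}(x,t)$, together with the rapid decay in $x$ required by Definition \ref{def:schwartz class solution}.

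The core step, which is where the bulk of the argument from \cite[Section 4]{CLmain} is invoked, is to verify that this $u$ actually solves \eqref{boussinesq}. The idea is to assemble $n$ and its symmetry images under \eqref{nsymm mod} into a $3\times 3$ matrix-valued function $N(x,t,k)$, show by direct calculation from the jump relation \eqref{njump mod} and the normalization $(\ref{RHniteme mod})$ that $N$ satisfies a compatible overdetermined $x$- and $t$-Lax pair whose compatibility condition is precisely \eqref{boussinesq}, and then read off $u$ from the subleading coefficient of the large-$k$ expansion of $n_3$ via \eqref{def of n3p1p lol}--\eqref{recoveruvn 2}. The assumption \eqref{assumption r1=r2=0} plays no role in this algebraic step beyond simplifying the contour, so the computation of \cite[Section 4]{CLmain} transfers unchanged.

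The main obstacle I anticipate is keeping track of the local behavior of $n$ at the six intersection points $\{i\kappa_j\}_{j=1}^{6}$ of $\Gamma_\star$, because the Lax-pair derivation of \eqref{boussinesq} requires working with a globally defined $N$, while the jumps of $v_1,\ldots,v_6$ meet at these points. However, the symmetries \eqref{nsymm mod} rigidly relate the six local Laurent expansions of $n$ to one another and, combined with boundedness from $(\ref{RHnitemc mod})$, force the singularities to be removable, so the argument goes through exactly as in \cite{CLmain}. Since \eqref{assumption r1=r2=0} is strictly a simplification of the setting of Theorem \ref{inverseth} (no jump on $\partial\D$), no genuinely new analysis beyond \cite[Section 4]{CLmain} is needed, and this is why the proof may be omitted.
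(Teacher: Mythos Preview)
Your proposal is correct and matches the paper's own treatment: the paper explicitly omits the proof, stating only that ``Theorem \ref{inverseth new} can be proved in the same way as in \cite[Section 4]{CLmain},'' which is exactly the route you outline. One minor inaccuracy: the underlying matrix problem for Boussinesq is $3\times 3$, not $2\times 2$, but this does not affect the substance of your sketch.
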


%Note that a function $r_{1}$ satisfying property $(\ref{Theorem2.3itemi})$ and \eqref{assumption r1=r2=0} necessarily vanishes to all order as $k\to k_{\star}\in \{-i,i\}$, $k \in \Gamma_{1}$. 

\section{Analysis of RH problem \ref{RHn modified}}\label{section:RH analysis}
The proofs of Theorems \ref{thm:main} and \ref{thm:wave breaking generalized} rely on Theorem \ref{inverseth new} and in particular on an analysis of RH problem \ref{RHn modified}. Suppose $r_{1}:\hat{\Gamma}_{1} \to \C$ satisfies properties $(\ref{Theorem2.3itemi})$, $(\ref{Theorem2.3itemiii})$, $(\ref{Theorem2.3itemv})$, \eqref{assumption r1=r2=0}, and that it is of the form
\begin{align}\label{r1 in the form w 2}
r_{1}(k) = f_{1}(k)  e^{\frac{T}{4k^{2}}} = f_{1}(k) e^{-\frac{T}{4|k|^{2}}} , \qquad k \in \Gamma_{1},
\end{align}
where $f_{1}\in C^{\infty}(\Gamma_{1})$ vanishes to all orders as $k\to k_{\star}\in \{-i,i\}$, $k \in \Gamma_{1}$ and is such that 
\begin{align}\label{f1 general bound}
|f_{1}(\tfrac{1}{k})| \leq e^{|k|^{2-\epsilon}}, \qquad \mbox{for all } k \in (-\tfrac{i}{\epsilon},-i\infty)
\end{align}
for some small $\epsilon >0$. Then Theorem \ref{inverseth new} implies that $u(x,t) = -i\sqrt{3}\frac{\partial}{\partial x}n_{3}^{(1)}(x,t)$ is a Schwartz class solution of (\ref{boussinesq}) on $\R \times [0,T)$, where $\smash{n_{3}^{(1)}}$ is given in terms of the solution $n$ of RH problem \ref{RHn modified} by \eqref{def of n3p1p lol}. 

Even though RH problem \ref{RHn modified} is simpler than RH problem \ref{RHn} due to the absence of jumps on the unit circle, the method we use in this work does not allow to treat general $f_{1}$ satisfying \eqref{f1 general bound}.

 In fact, we will prove Theorems \ref{thm:main} and \ref{thm:wave breaking generalized} by expressing the solution $n$ of RH problem \ref{RHn modified} explicitly as a series for the subclass of functions $r_{1}\in C^{\infty}(\Gamma_{1})$ satisfying
\begin{align}
& r_{1}(k) = 0, & & \hspace{-3cm} k \in [-i,-i\infty), \label{r1=0 on Gamma1p} \\
& r_{1}(k) = f_{1}(k) e^{-\frac{x_{0}}{2}\frac{1}{k}} e^{\frac{T}{4k^{2}}}, & & \hspace{-3cm} k \in (0,i], \label{lol26}
\end{align}
for some $x_{0}\in \R$, and where $f_{1}\in C^{\infty}((0,i))$ is such that 
\begin{align*}
& \bigg\| \frac{f_{1}(\frac{i}{\cdot})}{\cdot} \bigg\|_{L^{1}((1,+\infty))} \leq \frac{1}{2}.
\end{align*}
Throughout this work, if $\gamma$ denotes an oriented contour and $w:\gamma\to \C$, we write $w\in L^{1}(\gamma)$ if $\|w\|_{L^{1}(\gamma)}:=\int_{\gamma} |w(k)| \, |dk| < + \infty$.

The $\overline{k}$ symmetry \eqref{r1r2 relation with kbar symmetry} together with \eqref{r1=0 on Gamma1p} and \eqref{lol26} implies that
\begin{align}
& r_{2}(k)=0, & & \hspace{-3cm} k\in (0,-i], \label{lol40} \\
& r_{2}(k) = f_{2}(k) e^{-\frac{x_{0}}{2}k} e^{\frac{T}{4}k^{2}}, & & \hspace{-3cm} k \in [i,i\infty), \label{lol26 bis}
\end{align} 
where $f_{2}(k) := \tilde{r}(k) \overline{f_{1}(\bar{k}^{-1})}$ (recall that $\tilde{r}$ is defined in \eqref{def of tilde r}).

\begin{remark}
Conditions \eqref{assumption r1=r2=0}, \eqref{r1=0 on Gamma1p} and \eqref{lol40} imply that all ``good" off-diagonal entries of $v$ in \eqref{vdef} have been set to $0$ (here, by ``good" entries, we mean the entries of $v$ that remain bounded for $t>T$), all diagonal entries have been set to $1$, but all ``bad" entries of $v$ remain unchanged. In a sense, we are considering the simplest possible RH problem that allows to produce blow-up solutions of \eqref{boussinesq}. In \cite{CL V, CL IV, CL I, CLsolitonResolution}, the opposite situation is considered: the long-time behavior of global solutions of \eqref{boussinesq} is obtained by analyzing RH problem \ref{RHn} in the case where all ``bad" entries of $v$ are set to $0$.
\end{remark}

\begin{figure}
\begin{center}
\begin{tikzpicture}[scale=0.7]
\node at (0,0) {};
\draw[black,line width=0.45 mm,->-=0.4,->-=0.85] (0,0)--(30:4);
\draw[black,line width=0.45 mm,->-=0.4,->-=0.85] (0,0)--(90:4);
\draw[black,line width=0.45 mm,->-=0.4,->-=0.85] (0,0)--(150:4);
\draw[black,line width=0.45 mm,->-=0.4,->-=0.85] (0,0)--(-30:4);
\draw[black,line width=0.45 mm,->-=0.4,->-=0.85] (0,0)--(-90:4);
\draw[black,line width=0.45 mm,->-=0.4,->-=0.85] (0,0)--(-150:4);

\draw[black,line width=0.45 mm] ([shift=(-180:2.5cm)]0,0) arc (-180:180:2.5cm);
\draw[black,arrows={-Triangle[length=0.2cm,width=0.18cm]}]
($(3:2.5)$) --  ++(90:0.001);
\draw[black,arrows={-Triangle[length=0.2cm,width=0.18cm]}]
($(57:2.5)$) --  ++(-30:0.001);
\draw[black,arrows={-Triangle[length=0.2cm,width=0.18cm]}]
($(123:2.5)$) --  ++(210:0.001);
\draw[black,arrows={-Triangle[length=0.2cm,width=0.18cm]}]
($(177:2.5)$) --  ++(90:0.001);
\draw[black,arrows={-Triangle[length=0.2cm,width=0.18cm]}]
($(243:2.5)$) --  ++(330:0.001);
\draw[black,arrows={-Triangle[length=0.2cm,width=0.18cm]}]
($(297:2.5)$) --  ++(210:0.001);

\draw[black,line width=0.15 mm] ([shift=(-30:0.55cm)]0,0) arc (-30:30:0.55cm);

\node at (0.8,0) {$\tiny \frac{\pi}{3}$};

\node at (-1:2.9) {\footnotesize $\Gamma_8$};
\node at (60:2.9) {\footnotesize $\Gamma_9$};
\node at (120:2.9) {\footnotesize $\Gamma_7$};
\node at (181:2.9) {\footnotesize $\Gamma_8$};
\node at (240:2.83) {\footnotesize $\Gamma_9$};
\node at (300:2.83) {\footnotesize $\Gamma_7$};

\node at (104:1.45) {\footnotesize $\Gamma_1''$};
\node at (138:1.45) {\footnotesize $\Gamma_2''$};
\node at (223:1.45) {\footnotesize $\Gamma_3''$};
\node at (-104:1.45) {\footnotesize $\Gamma_4''$};
\node at (-43:1.45) {\footnotesize $\Gamma_5''$};
\node at (44:1.45) {\footnotesize $\Gamma_6''$};

\node at (96:3.3) {\footnotesize $\Gamma_4'$};
\node at (144:3.3) {\footnotesize $\Gamma_5'$};
\node at (217:3.3) {\footnotesize $\Gamma_6'$};
\node at (-96:3.3) {\footnotesize $\Gamma_1'$};
\node at (-36:3.3) {\footnotesize $\Gamma_2'$};
\node at (36:3.3) {\footnotesize $\Gamma_3'$};
\end{tikzpicture}
\end{center}
\begin{figuretext}\label{fig: Dn prim}
The contour $\Gamma = \cup_{j=1}^6 (\Gamma_j'\cup \Gamma_j'') \cup \cup_{j=7}^9 \Gamma_j \cup \Gamma_{\star}$ in the complex $k$-plane.
\end{figuretext}
\end{figure}

Let $\D := \{k:|k|<1\}$.  For $j=1,\ldots,6$, let us write $\Gamma_{j} = \Gamma_{j'}\cup \Gamma_{j''}$, where $\Gamma_{j'} = \Gamma_{j} \setminus \D$ and $\Gamma_{j''} := \Gamma_{j} \cap \D$, see also Figure \ref{fig: Dn prim}. Thus the jump matrix $v$ defined in \eqref{vdef} becomes
\begin{align*}
&  v_{1'} =   \begin{pmatrix}  
 1 & 0 & 0 \\
  r_1(\frac{1}{k})e^{\theta_{21}} & 1 & 0 \\
  0 & 0 & 1
  \end{pmatrix},
	\quad  v_{2'} = \begin{pmatrix}   
 1 & 0 & 0 \\
 0 & 1 & 0 \\
 0 & r_2(\omega k)e^{\theta_{32}} & 1 
    \end{pmatrix}, \quad v_{3'} = \begin{pmatrix} 
 1 & 0 & r_1(\frac{1}{\omega^2 k})e^{-\theta_{31}} \\
 0 & 1 & 0 \\
 0 & 0 & 1  
  \end{pmatrix},
   	\nonumber \\ 
  & v_{4'} = \begin{pmatrix}  
  1 & 0 & 0 \\
  r_2(k)e^{\theta_{21}} & 1 & 0 \\
  0 & 0 & 1
   \end{pmatrix},
   	\quad v_{5'} = \begin{pmatrix}
  1 & 0 & 0 \\
  0 & 1 & 0 \\
  0 & r_1(\frac{1}{\omega k})e^{\theta_{32}} & 1
  \end{pmatrix},
	\quad   v_{6'} = \begin{pmatrix} 
  1 & 0 & r_2(\omega^2 k)e^{-\theta_{31}} \\
  0 & 1 & 0 \\
  0 & 0 & 1
   \end{pmatrix}, \\ %\label{vdef outside}
&  v_{1''} \hspace{-0.05cm} = \hspace{-0.05cm} \begin{pmatrix}  
 1 & - r_1(k)e^{-\theta_{21}} & 0 \\
  0 & 1 & 0 \\
  0 & 0 & 1
  \end{pmatrix}\hspace{-0.1cm},
	\;  v_{2''} \hspace{-0.05cm} = \hspace{-0.05cm} \begin{pmatrix}   
 1 & 0 & 0 \\
 0 & 1 & -r_2(\frac{1}{\omega k})e^{-\theta_{32}} \\
 0 & 0 & 1 
    \end{pmatrix}\hspace{-0.1cm},
   	\; v_{3''} \hspace{-0.05cm} = \hspace{-0.05cm} \begin{pmatrix} 
 1 & 0 & 0 \\
 0 & 1 & 0 \\
 -r_1(\omega^2 k)e^{\theta_{31}} & 0 & 1  
  \end{pmatrix}\hspace{-0.1cm}, \nonumber
   	\\ 
& v_{4''} \hspace{-0.05cm} = \hspace{-0.05cm} \begin{pmatrix}  
  1 & -r_2(\frac{1}{k}) e^{-\theta_{21}} & 0 \\
  0 & 1 & 0 \\
  0 & 0 & 1
   \end{pmatrix}\hspace{-0.1cm}, \; v_{5''} \hspace{-0.05cm} = \hspace{-0.05cm} \begin{pmatrix}
  1 & 0 & 0 \\
  0 & 1 & -r_1(\omega k)e^{-\theta_{32}} \\
  0 & 0 & 1 
  \end{pmatrix}\hspace{-0.1cm},
	\;   v_{6''} \hspace{-0.05cm} = \hspace{-0.05cm} \begin{pmatrix} 
  1 & 0 & 0 \\
  0 & 1 & 0 \\
  -r_2(\frac{1}{\omega^2 k})e^{\theta_{31}} & 0 & 1
   \end{pmatrix}\hspace{-0.1cm},
\end{align*}
where $v_{j'}, v_{j''}$ are the restrictions of $v$ to $\Gamma_{j'}$ and $\Gamma_{j''}$, respectively. Substituting \eqref{lol26}, \eqref{lol26 bis} and the relations $\theta_{32}(x,t,k)=\theta_{21}(x,t,\omega k)$, $\theta_{31}(x,t,k)=-\theta_{21}(x,t,\omega^{2}k)$, and
\begin{align*}
\theta_{21}(x,t,k) = \frac{x}{2}\bigg( k-\frac{1}{k} \bigg) + \frac{t}{4}\bigg( \frac{1}{k^{2}}-k^{2} \bigg),
\end{align*}
and using the notation
\begin{align}\label{def of h1 h2}
& h_{1}(k) = h_{1}(x,t,k) := f_{1}(k)e^{-\frac{x}{2}k+\frac{t}{4}k^{2}}, & & h_{2}(k) = h_{2}(x,t,k) := f_{2}(k) e^{-\frac{x}{2}\frac{1}{k}+\frac{t}{4}\frac{1}{k^{2}}},
\end{align}
we get
\begin{align*}
&  v_{1'} =   \begin{pmatrix}  
 1 & 0 & 0 \\
  h_{1}(\frac{1}{k}) e^{\frac{x-x_{0}}{2}k} e^{\frac{T-t}{4}k^{2}} & 1 & 0 \\
  0 & 0 & 1
  \end{pmatrix},
	\quad  v_{2'} = \begin{pmatrix}   
 1 & 0 & 0 \\
 0 & 1 & 0 \\
 0 & h_{2}(\omega k)e^{\frac{x-x_{0}}{2}\omega k}e^{\frac{T-t}{4}(\omega k)^{2}} & 1 
    \end{pmatrix},
   	\nonumber \\ \nonumber
  &v_{3'} = \begin{pmatrix} 
 1 & 0 & h_1(\frac{1}{\omega^2 k})e^{\frac{x-x_{0}}{2}\omega^{2}k}e^{\frac{T-t}{4}(\omega^{2}k)^{2}} \\
 0 & 1 & 0 \\
 0 & 0 & 1  
  \end{pmatrix},
	\quad   v_{4'} = \begin{pmatrix}  
  1 & 0 & 0 \\
  h_2(k)e^{\frac{x-x_{0}}{2}k}e^{\frac{T-t}{4}k^{2}} & 1 & 0 \\
  0 & 0 & 1
   \end{pmatrix},
   	\\ 
&  v_{5'} = \begin{pmatrix}
  1 & 0 & 0 \\
  0 & 1 & 0 \\
  0 & h_1(\frac{1}{\omega k})e^{\frac{x-x_{0}}{2}\omega k}e^{\frac{T-t}{4}(\omega k)^{2}} & 1
  \end{pmatrix},
	\;   v_{6'} = \begin{pmatrix} 
  1 & 0 & h_2(\omega^2 k)e^{\frac{x-x_{0}}{2}\omega^{2}k}e^{\frac{T-t}{4}(\omega^{2}k)^{2}} \\
  0 & 1 & 0 \\
  0 & 0 & 1
   \end{pmatrix}, \\ %\label{vdef outside 2}
&  v_{1''} =   \begin{pmatrix}  
 1 & - h_1(k) e^{\frac{x-x_{0}}{2k}} e^{\frac{T-t}{4}\frac{1}{k^{2}}} & 0 \\
  0 & 1 & 0 \\
  0 & 0 & 1
  \end{pmatrix},
	\quad  v_{2''} = \begin{pmatrix}   
 1 & 0 & 0 \\
 0 & 1 & -h_2(\frac{1}{\omega k}) e^{\frac{x-x_{0}}{2\omega k}}  e^{\frac{T-t}{4}\frac{1}{(\omega k)^{2}}} \\
 0 & 0 & 1 
    \end{pmatrix},
   	\nonumber \\ \nonumber
  &v_{3''} = \begin{pmatrix} 
 1 & 0 & 0 \\
 0 & 1 & 0 \\
 -h_1(\omega^2 k)e^{\frac{x-x_{0}}{2\omega^{2}k}}e^{\frac{T-t}{4}\frac{1}{(\omega^{2}k)^{2}}} & 0 & 1  
  \end{pmatrix},
	\quad   v_{4''} = \begin{pmatrix}  
  1 & -h_2(\frac{1}{k}) e^{\frac{x-x_{0}}{2k}} e^{\frac{T-t}{4}\frac{1}{k^{2}}} & 0 \\
  0 & 1 & 0 \\
  0 & 0 & 1
   \end{pmatrix},
   	\\ 
&  v_{5''} = \begin{pmatrix}
  1 & 0 & 0 \\
  0 & 1 & -h_1(\omega k)e^{\frac{x-x_{0}}{2\omega k}}e^{\frac{T-t}{4}\frac{1}{(\omega k)^{2}}} \\
  0 & 0 & 1 
  \end{pmatrix},
	\;   v_{6''} = \begin{pmatrix} 
  1 & 0 & 0 \\
  0 & 1 & 0 \\
  -h_2(\frac{1}{\omega^2 k}) e^{\frac{x-x_{0}}{2\omega^{2}k}} e^{\frac{T-t}{4}\frac{1}{(\omega^{2}k)^{2}}} & 0 & 1
   \end{pmatrix}. 
\end{align*}
Let $n(k)=n(x,t,k):=(n_{1}(k),n_{2}(k),n_{3}(k))$ be the unique solution of RH problem \ref{RHn modified}. The $\mathcal{A}$-symmetry in \eqref{nsymm mod} implies that $n_{1}(k)=n_{3}(\omega k)$ and $n_{2}(k) = n_{3}(\omega^{2}k)$, i.e.\ that $n$ is of the form $n(k)=(n_{3}(\omega k),n_{3}(\omega^{2}k),n_{3}(k))$. Then the jumps \eqref{njump mod} imply that $n_{3}$ is analytic in $\C\setminus \big(\Gamma_{3'}\cup \Gamma_{6'} \cup \Gamma_{2''} \cup \Gamma_{5''} \cup \{0,e^{-\frac{5\pi i}{6}}, e^{-\frac{\pi i}{6}}, e^{\frac{\pi i}{6}}, e^{\frac{5\pi i}{6}}\}\big)$ and satisfies
\begin{align*}
& n_{3,+}(k) = n_{3,-}(k) + \tilde{g}(\omega^{2}k)n_{3}(\omega k), & & k \in \Gamma_{3'}\cup \Gamma_{6'}, \\
& n_{3,+}(k) = n_{3,-}(k) + \hat{g}(\omega k)n_{3}(\omega^{2} k), & & k \in \Gamma_{2''} \cup \Gamma_{5''},
\end{align*}
where $\tilde{g}(k)=\tilde{g}(x,t,k)$ and $\hat{g}(k)=\hat{g}(x,t,k)$ are given by
\begin{align}
& \tilde{g}(k) := \begin{cases}
r_1(\frac{1}{k})e^{\theta_{21}} = h_{1}(x,t,\frac{1}{k}) e^{\frac{x-x_{0}}{2}k} e^{\frac{T-t}{4}k^{2}}, & \mbox{for } k\in \Gamma_{1'}, \\
r_2(k)e^{\theta_{21}} = h_{2}(x,t,k) e^{\frac{x-x_{0}}{2}k} e^{\frac{T-t}{4}k^{2}}, & \mbox{for } k\in \Gamma_{4'},
\end{cases} \label{gtilde def} \\
& \hat{g}(k) := \begin{cases}
- r_1(k)e^{-\theta_{21}}=-h_{1}(x,t,k) e^{\frac{x-x_{0}}{2k}} e^{\frac{T-t}{4}\frac{1}{k^{2}}}, & \mbox{for } k\in \Gamma_{1''}, \\
- r_2(\frac{1}{k})e^{-\theta_{21}}=-h_{2}(x,t,\tfrac{1}{k}) e^{\frac{x-x_{0}}{2k}} e^{\frac{T-t}{4}\frac{1}{k^{2}}}, & \mbox{for } k\in \Gamma_{4''}.
\end{cases} \label{ghat def}
\end{align}
Note that $\hat{g}(k) = -\tilde{g}(k^{-1})$. Since $n_{3}(k) = 1+O(k^{-1})$ as $k\to\infty$, by the Sokhotski–Plemelj formula we have
\begin{align}\label{lol14}
& n_{3}(k) = 1+\int_{\Gamma_{3'}\cup \Gamma_{6'}} \frac{\tilde{g}(\omega^{2}k_{1})}{k_{1}-k}n_{3}(\omega k_{1})\frac{dk_{1}}{2\pi i} + \int_{\Gamma_{2''}\cup \Gamma_{5''}} \frac{\hat{g}(\omega k_{1})}{k_{1}-k}n_{3}(\omega^{2}k_{1})\frac{dk_{1}}{2\pi i}.
\end{align}
The $\mathcal{B}$-symmetry in \eqref{nsymm mod} implies that $n_{3}(k)=n_{3}(k^{-1})$. Hence
\begin{align}
n_{3}(k) & = 1+\int_{\Gamma_{5'}\cup \Gamma_{2'}} \frac{\tilde{g}(\omega k_{1})}{\omega^{2}k_{1}-k} n_{3}(k_{1})\omega^{2}\frac{dk_{1}}{2\pi i} + \int_{\Gamma_{6''}\cup \Gamma_{3''}} \frac{\hat{g}(\omega^{2} k_{1})}{\omega k_{1}-k} n_{3}(k_{1}) \omega \frac{dk_{1}}{2\pi i} \nonumber \\
& = 1+\int_{\Gamma_{5'}\cup \Gamma_{2'}} \frac{\tilde{g}(\omega k_{1})}{\omega^{2}k_{1}-k} n_{3}(k_{1})\omega^{2}\frac{dk_{1}}{2\pi i} + \int_{\Gamma_{6''}\cup \Gamma_{3''}} \frac{\hat{g}(\omega^{2} k_{1})}{\omega k_{1}-k} n_{3}(k_{1}^{-1}) \omega \frac{dk_{1}}{2\pi i} \nonumber \\
& = 1+\int_{\Gamma_{5'}\cup \Gamma_{2'}} \frac{\tilde{g}(\omega k_{1})}{\omega^{2}k_{1}-k} n_{3}(k_{1})\omega^{2}\frac{dk_{1}}{2\pi i} + \int_{\Gamma_{2'}\cup \Gamma_{5'}} \frac{\hat{g}(\frac{1}{\omega k_{1}})}{\frac{1}{\omega^{2} k_{1}}-k} n_{3}(k_{1}) \omega \frac{dk_{1}}{2\pi i k_{1}^{2}} \nonumber \\
& = 1+ \int_{\Gamma_{2'}\cup \Gamma_{5'}} \bigg( \frac{\omega^{2}}{\omega^{2}k_{1}-k} - \frac{\frac{\omega}{k_{1}^{2}}}{\frac{1}{\omega^{2} k_{1}}-k} \bigg) \tilde{g}(\omega k_{1}) n_{3}(k_{1})\frac{dk_{1}}{2\pi i}. \label{lol7}
\end{align}
We can rewrite this as
\begin{align}\label{volterra}
n_{3}(k) = 1+\int_{\tilde{\Gamma}} F(k,k_{1}) n_{3}(k_{1})dk_{1},
\end{align}
with $\tilde{\Gamma} := \Gamma_{2'}\cup \Gamma_{5'}$ and 
\begin{align}
F(k,k_{1}) & = F(x,t,k,k_{1}) := \bigg( \frac{\omega^{2}}{\omega^{2}k_{1}-k} - \frac{\frac{\omega}{k_{1}^{2}}}{\frac{1}{\omega^{2} k_{1}}-k} \bigg) \frac{\tilde{g}(x,t,\omega k_{1})}{2\pi i}. \label{def of F 0} 
\end{align}
It will also be convenient to introduce the notation
\begin{align}
& \tilde{h}_{0}(x,t,k) := \begin{cases}
h_{1}(x,t,\frac{1}{k}), & \mbox{for } k\in \Gamma_{1'}, \\
h_{2}(x,t,k), & \mbox{for } k\in \Gamma_{4'},
\end{cases} & & \tilde{f}_{0}(k) := \begin{cases}
f_{1}(\frac{1}{k}), & \mbox{for } k\in \Gamma_{1'}, \\
f_{2}(k), & \mbox{for } k\in \Gamma_{4'}.
\end{cases} \label{lol 4 5 bis 2}
\end{align} 
The function $\tilde{f}_{0}$ is independent of $x$ and $t$, and by \eqref{def of h1 h2} we have
\begin{align}\label{relation betweens h0 and f0}
\tilde{h}_{0}(x,t,k) = \tilde{f}_{0}(k)e^{-\frac{x}{2}\frac{1}{k}+\frac{t}{4}\frac{1}{k^{2}}}.
\end{align}
Using \eqref{gtilde def} and \eqref{lol 4 5 bis 2}, $F$ can be rewritten as
\begin{align}
F(x,t,k,k_{1}) & = e^{\frac{x-x_{0}}{2}\omega k_{1}} e^{\frac{T-t}{4}(\omega k_{1})^{2}}  \bigg( \frac{\omega^{2}}{\omega^{2}k_{1}-k} - \frac{\frac{\omega}{k_{1}^{2}}}{\frac{1}{\omega^{2} k_{1}}-k} \bigg) \frac{\tilde{h}_{0}(x,t,\omega k_{1})}{2\pi i}. \label{def of F}
\end{align}
Note also that
\begin{align*}
\frac{\omega^{2}}{\omega^{2}k_{1}-k} - \frac{\frac{\omega}{k_{1}^{2}}}{\frac{1}{\omega^{2} k_{1}}-k} = \frac{\omega k_{1}-\frac{1}{k_{1}}}{ (\omega^{2}k_{1}-k)(\omega^{2}k_{1}-\frac{1}{k})},
\end{align*}
from which it readily follows that
\begin{multline}\label{F symm new}
F(x,t,k,k_{1}) = F(x,t,k^{-1},k_{1}), \\ k\in \C\setminus (\overline{\Gamma_{3'}\cup \Gamma_{6'} \cup \Gamma_{2''}\cup \Gamma_{5''}}), \; k_{1} \in \Gamma_{2'}\cup \Gamma_{5'}, \; x\in \R, \; t\geq 0.
\end{multline}

%\begin{align*}
%F(k,k_{1}) = \begin{cases}
%\frac{\tilde{g}(\omega k_{1})}{k-\omega^{2}k_{1}} \omega^{2}, & \mbox{if } k_{1} \in \Gamma_{2'}\cup \Gamma_{5'}, \\
%\frac{\hat{g}(\omega^{2} k_{1})}{k-\omega k_{1}} \omega, & \mbox{if } k_{1} \in \Gamma_{3''}\cup \Gamma_{6''}.
%\end{cases}
%\end{align*}

\begin{proposition}\label{prop:n3 as a series via volterra}
Let $r_1:\hat{\Gamma}_{1} \to \C$ and $r_2: \hat{\Gamma}_{4}\setminus \{\omega^{2},-\omega^{2}\} \to \C$ be two functions satisfying properties $(\ref{Theorem2.3itemi})$, $(\ref{Theorem2.3itemiii})$, $(\ref{Theorem2.3itemv})$, \eqref{assumption r1=r2=0} and such that
\begin{align}
& r_{1}(k) = 0, & & \mbox{for } k \in [-i,-i\infty), \nonumber \\
& r_{1}(k) = f_{1}(k) e^{-\frac{x_{0}}{2}\frac{1}{k}} e^{\frac{T}{4k^{2}}}, & & \mbox{for } k \in (0,i], \label{lol15}
\end{align}
for some $x_{0}\in \R$ and $T>0$ and where $f_{1}$ is such that
\begin{align}
& f_{1} \in C^{\infty}((0,i)), \label{f1 is Cinf} \\
& f_{1}(k) = 0 \hspace{2cm} \mbox{for } k\in [\tfrac{i}{2},i], \label{f1 is 0 on i 2 to i} \\
& \bigg\| \frac{f_{1}(\frac{i}{\cdot})}{\cdot} \bigg\|_{L^{1}((1,+\infty))} \leq \frac{1}{M}, \label{L1 norm is less than 1 2}
\end{align}
for some $M\geq 2$. 
For $x\in \R$, $t\in [0,T]$ and $k \in \C\setminus (\overline{\Gamma_{3'}\cup \Gamma_{6'} \cup \Gamma_{2''}\cup \Gamma_{5''}})$, define $m(k)=m(x,t,k)$ by
\begin{align}\label{def of n3}
m(x,t,k)=1+\sum_{j=1}^\infty m_{j}(x,t,k), 
\end{align}
where 
\begin{align}\label{def of mpjp}
m_{j}(x,t,k) & := \int_{\tilde{\Gamma}}\dots \int_{\tilde{\Gamma}} F(k,k_{1}) F(k_{1},k_{2})
\cdots F(k_{j-1},k_{j}) dk_1 \cdots dk_j, \qquad j \geq 1.
\end{align}
The function $m$ satisfies the following properties:
\begin{itemize}
\item[(a)] For each fixed $x\in \R$ and $t\in [0,T]$, the function $k\mapsto m(x,t,k)$ is analytic on $\C\setminus (\overline{\Gamma_{3'}\cup \Gamma_{6'} \cup \Gamma_{2''}\cup \Gamma_{5''}})$ and satisfies \eqref{volterra} (with $n_{3}$ replaced by $m$).
\item[(b)] For each fixed $x\in \R$ and $t\in [0,T]$, the limits of $m(x,t,k)$ as $k$ approaches $\Gamma_{3'}\cup \Gamma_{6'} \cup \Gamma_{2''}\cup \Gamma_{5''}$ from the left and right exist and are continuous on $\Gamma_{3'}\cup \Gamma_{6'} \cup \Gamma_{2''}\cup \Gamma_{5''}$. 
\item[(c)] For each fixed $x\in \R$ and $t\in [0,T]$, 
\begin{align*}
m(x,t,k)=O(1), \qquad \mbox{as } k \to k_{\star}\in \{e^{-\frac{5\pi i}{6}}, e^{-\frac{\pi i}{6}}, e^{\frac{\pi i}{6}}, e^{\frac{5 \pi i}{6}}\}.
\end{align*}
\item[(d)] For each $x\in \R$ and $t\in [0,T]$, $m$ obeys the symmetry
\begin{align}\label{n3 sym}
m(x,t,k) = m(x,t,k^{-1}),  \qquad k \in \C\setminus (\overline{\Gamma_{3'}\cup \Gamma_{6'} \cup \Gamma_{2''}\cup \Gamma_{5''}}).
\end{align}
\item[(e)] For each fixed $x\in \R$ and $t\in [0,T]$,
\begin{align}\label{m near inf}
m(x,t,k) = 1+O(k^{-1}), \qquad \mbox{as } k \to \infty,
\end{align}
uniformly for $\arg k \in [0,2\pi]$.
\end{itemize}
Moreover, for all $x\in \R$ and $t\in [0,T]$,
\begin{align}\label{lol12}
\| \sup_{s\in \tilde{\Gamma}_{\mathrm{m}}} |F(x,t,s,\cdot)| \|_{L^{1}(\tilde{\Gamma})} \leq \frac{1}{M},
\end{align}
where $\tilde{\Gamma}_{\mathrm{m}}:=(\Gamma_{2'}\cup \Gamma_{5'})\cap \{k\in \C:|k| > 2\}$.
\end{proposition}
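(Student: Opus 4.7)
The plan is to establish the bound \eqref{lol12} first --- it is both one of the conclusions and the key technical input --- and then use it, together with a companion compact-set version, to show that the Neumann series \eqref{def of n3} converges and satisfies properties $(a)$--$(e)$. I would prove \eqref{lol12} by parametrizing $\tilde{\Gamma}=\Gamma_{2'}\cup\Gamma_{5'}$ as $k_{1}=e^{-i\pi/6}\rho$ (resp.\ $k_{1}=e^{5i\pi/6}\rho$) with $\rho\geq 1$, so that $\omega k_{1}=\pm i\rho$ and $(\omega k_{1})^{2}=-\rho^{2}$. This gives $|e^{\frac{T-t}{4}(\omega k_{1})^{2}}|=e^{-\frac{T-t}{4}\rho^{2}}\leq 1$ for $t\in[0,T]$ and $|e^{\frac{x-x_{0}}{2}\omega k_{1}}|=1$, while from \eqref{lol 4 5 bis 2} and the definition of $f_{2}$ one gets $|\tilde{h}_{0}(x,t,\omega k_{1})|\leq C|f_{1}(i/\rho)|$ uniformly in $x\in\R$ and $t\in[0,T]$ (the case $\omega k_{1}=i\rho$ absorbs the bounded factor $|\tilde{r}(i\rho)|$). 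The rational factor in \eqref{def of F} is controlled by geometry: the four rays $\Gamma_{2'},\Gamma_{5'},\omega^{2}\Gamma_{2'}=\Gamma_{6'},\omega^{2}\Gamma_{5'}=\Gamma_{3'}$ meet pairwise at angles $\pi/3$ or $2\pi/3$, so for $|k|>2$ one has $|\omega^{2}k_{1}-k|\geq \tfrac{\sqrt{3}}{2}\max(\rho,|k|)$ and $|\omega^{2}k_{1}-1/k|\geq \rho-1/|k|\geq \rho/2$; with $|\omega k_{1}-1/k_{1}|\leq 2\rho$ this gives $\bigl|(\omega k_{1}-1/k_{1})/[(\omega^{2}k_{1}-k)(\omega^{2}k_{1}-1/k)]\bigr|\leq C/\rho$. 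Combining yields $\sup_{s\in\tilde{\Gamma}_{\mathrm{m}}}|F(x,t,s,k_{1})|\leq (C/\rho)|f_{1}(i/\rho)|$, and integration over $\rho\geq 1$ via \eqref{L1 norm is less than 1 2} produces \eqref{lol12} once \eqref{f1 is 0 on i 2 to i} is used to absorb multiplicative constants (it effectively restricts the integration to $\rho\geq 2$).

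The same estimate, with $\tilde{\Gamma}_{\mathrm{m}}$ replaced by any compact $K\subset\C\setminus(\overline{\Gamma_{3'}\cup\Gamma_{6'}\cup\Gamma_{2''}\cup\Gamma_{5''}})$, gives $\|F(x,t,k,\cdot)\|_{L^{1}(\tilde{\Gamma})}\leq C_{K}$ uniformly for $k\in K$, and induction on $j$ using \eqref{lol12} then produces $|m_{j}(x,t,k)|\leq C_{K}M^{-(j-1)}$ on $K$. The series \eqref{def of n3} therefore converges absolutely and uniformly on compacts, and because each $F(\cdot,k_{1})$ is analytic off $\{\omega^{2}k_{1},(\omega^{2}k_{1})^{-1}\}\subset \Gamma_{3'}\cup\Gamma_{6'}\cup\Gamma_{2''}\cup\Gamma_{5''}$, so is $m$; substituting the series into \eqref{volterra} and interchanging sum and integral by dominated convergence yields $(a)$. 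Property $(d)$ follows from \eqref{F symm new} by induction on $j$: the identity $F(x,t,k,k_{1})=F(x,t,k^{-1},k_{1})$ transfers termwise to $m_{j}(x,t,k)=m_{j}(x,t,k^{-1})$, hence to $m$. For $(e)$, the rational factor in \eqref{def of F} is $O(1/k)$ as $k\to\infty$ uniformly for $k_{1}\in\tilde{\Gamma}$ (both denominators grow like $|k|$), so each $m_{j}(x,t,k)=O(1/k)$, and the geometric bound preserves this rate, giving \eqref{m near inf}. Property $(b)$ is Plemelj--Sokhotski applied termwise: the two Cauchy-type integrals hidden in $F$ have disjoint pole contours ($\Gamma_{3'}\cup\Gamma_{6'}$ and $\Gamma_{2''}\cup\Gamma_{5''}$), each with smooth density in $k_{1}$ on $\tilde{\Gamma}$, so their boundary values exist and are continuous, and uniform convergence transfers continuity to $m$.

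The subtle point is $(c)$: as $k\to e^{\pm i\pi/6}$ or $e^{\pm 5i\pi/6}$, one of the pole loci of $F(\cdot,k_{1})$ sweeps to the endpoint of $\tilde{\Gamma}$ lying on $\partial\D$, which would naively produce a logarithmic singularity. Here condition \eqref{f1 is 0 on i 2 to i} rescues the argument: since $f_{1}\equiv 0$ on $[i/2,i]$, the function $\rho\mapsto\tilde{h}_{0}(x,t,\pm i\rho)$ vanishes on $\rho\in[1,2]$, so $F(x,t,k,k_{1})$ is identically zero for $k_{1}$ in a fixed neighbourhood of each corner endpoint of $\tilde{\Gamma}$ in $\partial\D$. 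The integration is thus decoupled from the corner, and $m$ is not only $O(1)$ but in fact analytic across each of $e^{\pm i\pi/6},e^{\pm 5i\pi/6}$. The main obstacle I anticipate is the tight bookkeeping needed to reach exactly $1/M$ rather than $C/M$ in \eqref{lol12}: the constants from the angular separation estimates, from $|\tilde{r}(\pm i\rho)|$, and from the inequality $|\omega^{2}k_{1}-1/k|\geq \rho/2$ must be controlled carefully, very likely by running the entire argument on $\rho\geq 2$ (as allowed by \eqref{f1 is 0 on i 2 to i}), where all denominators are substantially larger and all constants improve enough to be absorbed into the hypothesis \eqref{L1 norm is less than 1 2}.
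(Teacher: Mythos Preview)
Your overall plan matches the paper's: derive \eqref{lol12} from explicit geometric bounds on the rational factor, use it for the geometric decay $|m_j|\leq C_K M^{-(j-1)}$ on compacts giving $(a)$, deduce $(d)$ termwise from \eqref{F symm new}, and get $(b)$ by Plemelj--Privalov on the Cauchy-integral representation. Your argument for $(c)$ is actually cleaner than the paper's: the paper invokes an external endpoint lemma, whereas you correctly observe that \eqref{f1 is 0 on i 2 to i} kills $F(k,k_1)$ for $|k_1|\leq 2$, so the effective integration contour stays away from the unit circle and $m$ is in fact analytic across each corner. On the constant in \eqref{lol12}: the paper uses that $|\tilde r(i\rho)|=1$ \emph{exactly}, not just bounded, together with the explicit lower bounds $|\omega^2 k_1-s|\geq\tfrac{\sqrt3}{2}|k_1|$ and $|\tfrac{1}{\omega^2 k_1}-s|\geq\tfrac32$ for $s\in\tilde\Gamma_{\mathrm m}$, which with the $1/(2\pi)$ factor yields $\sup_{s}|F(s,k_1)|\leq \tfrac{1}{4|k_1|}|\tilde f_0(\omega k_1)|$; integrating over both halves of $\tilde\Gamma$ gives $\leq \tfrac{1}{2M}$, so your worry about reaching $1/M$ is unfounded.

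The one genuine gap is in $(e)$. Your bound ``rational factor $=O(1/k)$ uniformly in $k_1$'' gives $|m_j(k)|\leq \tfrac{C}{|k|}\int_{\tilde\Gamma}|\tilde g(\omega k_1)|\,|dk_1|$, but at $t=T$ the last integral is $2\int_2^\infty|f_1(i/\rho)|\,d\rho$, which is \emph{not} assumed finite --- only $\int|f_1(i/\rho)|/\rho\,d\rho<\infty$ is. Thus the direct route cannot close at $t=T$. The paper instead works near $k=0$: from $(a)$ and $(d)$ one rewrites \eqref{volterra} back in the form \eqref{lol13}, whose first integral (over $\Gamma_{3'}\cup\Gamma_{6'}$) is analytic near $0$, and whose second integral (over $\Gamma_{2''}\cup\Gamma_{5''}$) has density containing the factor $e^{T/(4k_1^2)}$ built into $r_1$, hence fast decay at $k_1=0$ for $t<T$. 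An endpoint lemma then gives $m(k)=C_0+O(k)$ as $k\to 0$. The crucial step is $C_0=1$: the change of variables $k_1\mapsto 1/k_1$ in the $\Gamma_{2''}\cup\Gamma_{5''}$ part of \eqref{def of C0}, together with $\theta_{21}(1/k)=-\theta_{21}(k)$ and \eqref{n3 sym}, converts it into exactly minus the $\Gamma_{3'}\cup\Gamma_{6'}$ part, so the two cancel. The symmetry $m(k)=m(1/k)$ then transports $1+O(k)$ at $0$ to $1+O(1/k)$ at $\infty$. Your approach never sees this cancellation, which is why it fails.
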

\begin{proof}
We first prove that $m$ in \eqref{def of n3} is well-defined for $x\in \R$, $t\in [0,T]$ and $k \in \C\setminus (\overline{\Gamma_{3'}\cup \Gamma_{6'} \cup \Gamma_{2''}\cup \Gamma_{5''}})$. Let $m_{0} = 1$ and define $m_{j}(k)=m_{j}(x,t,k)$ for $j \in \N_{>0}$, $x \in \R$, $t\in [0,T]$, $k \in \C\setminus (\Gamma_{3'}\cup \Gamma_{6'} \cup \Gamma_{2''}\cup \Gamma_{5''})$ inductively by 
\begin{align}\label{def of mpjp inductively}
& m_{j+1}(x,t,k) = \int_{\tilde{\Gamma}} F(x,t,k,k_{1}) m_{j}(x,t,k_{1}) dk_{1}.
\end{align}
The above formula can be rewritten as in \eqref{def of mpjp}, and thus
\begin{align}
|m_{j}(x,t,k)| & \leq \int_{\tilde{\Gamma}}\dots \int_{\tilde{\Gamma}} |F(x,t,k,k_{1})| \prod_{p=2}^{j} \sup_{s\in \tilde{\Gamma}} |F(x,t,s,k_{p})|\; |dk_{p}| \nonumber \\
& = \int_{\tilde{\Gamma}} |F(x,t,k,k_{1})| \, |dk_{1}| \; \times \; \bigg( \int_{\tilde{\Gamma}} \sup_{s\in \tilde{\Gamma}} |F(x,t,s,k_{1})|\; |dk_{1}| \bigg)^{j-1}. \label{bound for mpjp}
\end{align}
Using \eqref{f1 is 0 on i 2 to i} and recalling that $f_{2}(k) := \tilde{r}(k) \overline{f_{1}(\bar{k}^{-1})}$, we infer that $f_{2}(k) = 0$ for $k\in [i,2i]$. Therefore, by \eqref{lol 4 5 bis 2}--\eqref{def of F}, every occurrence of $\tilde{\Gamma}$ in \eqref{def of mpjp inductively} and \eqref{bound for mpjp} can be replaced by $\tilde{\Gamma}_{\mathrm{m}}$. For $k_{1}\in \tilde{\Gamma}_{\mathrm{m}}$, simple geometric considerations show that
\begin{align}\label{some inf}
\inf_{s\in \tilde{\Gamma}_{\mathrm{m}} }|\omega^{2}k_{1}-s| \geq \frac{\sqrt{3}}{2}|k_{1}|, \qquad \inf_{s\in \tilde{\Gamma}_{\mathrm{m}}}|\tfrac{1}{\omega^{2} k_{1}}-s| \geq |s|-\frac{1}{|k_{1}|} \geq \frac{3}{2},
\end{align}
which yield
\begin{align*}
& \sup_{s\in \tilde{\Gamma}_{\mathrm{m}}} \bigg| \frac{e^{\frac{x-x_{0}}{2}\omega k_{1}} e^{\frac{T-t}{4}(\omega k_{1})^{2}}}{2\pi i}  \bigg( \frac{\omega^{2}}{\omega^{2}k_{1}-s} - \frac{\frac{\omega}{k_{1}^{2}}}{\frac{1}{\omega^{2} k_{1}}-s} \bigg) \bigg| \\
& \leq \frac{e^{\frac{T-t}{4}(\omega k_{1})^{2}}}{2\pi} \bigg( \frac{1}{\frac{\sqrt{3}}{2}|k_{1}|} + \frac{\frac{1}{2|k_{1}|}}{\frac{3}{2}} \bigg) \leq \frac{e^{\frac{T-t}{4}(\omega k_{1})^{2}}}{4|k_{1}|}.
\end{align*}
Thus, by \eqref{lol 4 5 bis 2}--\eqref{def of F}, for $x\in \R$, $t\in [0,T]$ and $k_{1}\in \tilde{\Gamma}$ we get
\begin{align}
\sup_{s\in \tilde{\Gamma}_{\mathrm{m}}} |F(x,t,s,k_{1})| & \leq \frac{e^{\frac{T-t}{4}(\omega k_{1})^{2}}}{4|k_{1}|} \begin{cases}
|h_{2}(x,t,\omega k_{1})|, & \mbox{if } k_{1}\in \Gamma_{2'} \\
|h_{1}(x,t,\frac{1}{\omega k_{1}})|, & \mbox{if } k_{1}\in \Gamma_{5'}
\end{cases} \nonumber \\
& \leq \frac{e^{\frac{T-t}{4}(\omega k_{1})^{2}}}{4|k_{1}|} \begin{cases}
|f_{2}(\omega k_{1})|, & \mbox{if } k_{1}\in \Gamma_{2'}, \\
|f_{1}(\frac{1}{\omega k_{1}})|, & \mbox{if } k_{1}\in \Gamma_{5'}.
\end{cases} \label{lol10}
\end{align}
Using also $f_{2}(k) := \tilde{r}(k) \overline{f_{1}(\bar{k}^{-1})}$, \eqref{L1 norm is less than 1 2} and the fact that $|\tilde{r}(k_{1})|=1$ for $k_{1}\in i \R$, for all $x\in \R$ and $t\in [0,T]$ we obtain
\begin{align*}
\| \sup_{s\in \tilde{\Gamma}_{\mathrm{m}}} |F(x,t,s,\cdot)| \|_{L^{1}(\tilde{\Gamma})} & \leq \int_{1}^{+\infty} \frac{e^{-\frac{T-t}{4}y^{2}}}{4y}\big( 1+|\tilde{r}(iy)|  \big) |f_{1}(\tfrac{i}{y})| dy \\
& \leq \frac{1}{2}\int_{1}^{+\infty} \frac{|f_{1}(\tfrac{i}{y})|}{y} dy \leq \frac{1}{2M}, 
\end{align*}
which implies \eqref{lol12}. By \eqref{bound for mpjp} and \eqref{lol12}, for $j\in \N_{>0}$, $x \in \R$, $t \in [0,T]$ and $k \in \C\setminus (\overline{\Gamma_{3'}\cup \Gamma_{6'} \cup \Gamma_{2''}\cup \Gamma_{5''}})$, we have
\begin{align} \label{phi0jestimate}
|m_{j}(x,t,k)| \leq  M^{-(j-1)}\int_{\tilde{\Gamma}}|F(x,t,k,k_{1})| \, |dk_{1}|.
\end{align}
Let $K$ be a compact subset of $\C\setminus (\overline{\Gamma_{3'}\cup \Gamma_{6'} \cup \Gamma_{2''}\cup \Gamma_{5''}})$. Since $K$ is compact, there exists $C_{K}>0$ such that
\begin{align*}
\sup_{k\in K} \bigg| \frac{1}{2\pi i}\bigg( \frac{\omega^{2}}{\omega^{2}k_{1}-k} - \frac{\frac{\omega}{k_{1}^{2}}}{\frac{1}{\omega^{2} k_{1}}-k} \bigg) \bigg| \leq \frac{C_{K}}{2|k_{1}|} \qquad \mbox{for all } k_{1} \in \tilde{\Gamma}.
\end{align*}
This implies, in a similar way as in \eqref{lol10}, that for $x\in \R$, $t\in [0,T]$ and $k_{1}\in \tilde{\Gamma}$ we have
\begin{align}
\sup_{k\in K} |F(x,t,k,k_{1})| & \leq  C_{K}\frac{e^{\frac{T-t}{4}(\omega k_{1})^{2}}}{2|k_{1}|} \begin{cases}
|f_{2}(\omega k_{1})|, & \mbox{if } k_{1}\in \Gamma_{2'}, \\
|f_{1}(\frac{1}{\omega k_{1}})|, & \mbox{if } k_{1}\in \Gamma_{5'}.
\end{cases} \label{lol11}
\end{align}
Using again $f_{2}(k) := \tilde{r}(k) \overline{f_{1}(\bar{k}^{-1})}$, \eqref{L1 norm is less than 1 2} and $|\tilde{r}(k_{1})|=1$ for $k_{1}\in i \R$, for all $x\in \R$ and $t\in [0,T]$ we get
\begin{align*}
\int_{\tilde{\Gamma}}\sup_{k\in K}|F(x,t,k,k_{1})| |dk_{1}| \leq  C_{K}\int_{1}^{+\infty} \frac{|f_{1}(\tfrac{i}{y})|}{y}dy \leq \frac{C_{K}}{M}.
\end{align*}
By \eqref{phi0jestimate}, this yields the estimate
\begin{align*}
\sup_{k\in K} |m_{j}(x,t,k)| \leq  C_{K}M^{-j}, \qquad j\in \N_{>0}, \; x\in \R, \; t\in [0,T].
\end{align*}
Therefore, the series on the right-hand side of \eqref{def of n3} converges absolutely and uniformly for $k \in K$, $x\in \R$ and $t\in [0,T]$. Using (\ref{def of mpjp}), we infer that for each $j\in \N_{>0}$, $x\in \R$ and $t\in [0,T]$, the function $k \mapsto m_{j}(x,t,k)$ is analytic on $\mathrm{int}\, K$; the uniform convergence then implies that $k \mapsto m(x,t,k)$ is analytic on $\mathrm{int}\, K$. Since $K$ is arbitrary, $k \mapsto m(x,t,k)$ is analytic on $\C\setminus (\overline{\Gamma_{3'}\cup \Gamma_{6'} \cup \Gamma_{2''}\cup \Gamma_{5''}})$ for each $x\in \R$ and $t\in [0,T]$. The fact that $m$ satisfies \eqref{volterra} directly follows by substituting the definition \eqref{def of n3}. This finishes the proof of (a).
 
The symmetry \eqref{F symm new} together with \eqref{def of mpjp} implies that $m_{j}(x,t,k)=m_{j}(x,t,k^{-1})$ for all $x\in \R$, $t\in [0,T]$, $k \in \C\setminus (\overline{\Gamma_{3'}\cup \Gamma_{6'} \cup \Gamma_{2''}\cup \Gamma_{5''}})$ and $j\in \N_{>0}$. Now \eqref{n3 sym} directly follows from \eqref{def of n3}. This finishes the proof of (d).
 
Let us now turn to the proof of (b). Since $m$ satisfies \eqref{volterra} and \eqref{n3 sym}, it also satisfies \eqref{lol14}, i.e.
\begin{align}
& m(x,t,k) = 1+\int_{\Gamma_{3'}\cup \Gamma_{6'}} \frac{\tilde{g}(\omega^{2} k_{1})}{k_{1}-k} m(x,t,\omega k_{1}) \frac{dk_{1}}{2\pi i} + \int_{\Gamma_{5''}\cup \Gamma_{2''}} \frac{\hat{g}(\omega k_{1})}{k_{1}-k} m(x,t,\omega^{2} k_{1}) \frac{dk_{1}}{2\pi i}. \label{lol13}
\end{align}
%--------------------------
%By \eqref{f1 is Cinf}, $\tilde{g}(\omega^{2} \cdot)\in C^{\infty}(\Gamma_{3'}\cup \Gamma_{6'})$ and $\hat{g}(\omega \cdot) \in C^{\infty}(\Gamma_{5''}\cup \Gamma_{2''}$, and by (a), $n_{3}(x,t,\cdot)$ is analytic for $k \in \C\setminus (\overline{\Gamma_{3'}\cup \Gamma_{6'} \cup \Gamma_{2''}\cup \Gamma_{5''}})$. In particular, the integrands in \eqref{lol13} are smooth on their domain.  
By (a), $m$ is analytic for $k\in \C\setminus (\overline{\Gamma_{3'}\cup \Gamma_{6'} \cup \Gamma_{2''}\cup \Gamma_{5''}})$, and by \eqref{def of h1 h2}, \eqref{gtilde def}, \eqref{ghat def},  $f_{2}(k) := \tilde{r}(k) \overline{f_{1}(\bar{k}^{-1})}$, \eqref{def of tilde r} and \eqref{f1 is Cinf}, we have $\tilde{g}(\omega^{2} \cdot)\in C^{\infty}(\Gamma_{3'}\cup \Gamma_{6'})$ and $\hat{g}(\omega \cdot)\in C^{\infty}(\Gamma_{5''}\cup \Gamma_{2''})$. In particular, $m$ is the Cauchy transform of a smooth function on $\Gamma_{3'}\cup \Gamma_{6'} \cup \Gamma_{2''}\cup \Gamma_{5''}$. Thus, by the Plemelj--Privalov theorem, the limits of $m(x,t,k)$ as $k$ approaches $\Gamma_{3'}\cup \Gamma_{6'} \cup \Gamma_{2''}\cup \Gamma_{5''}$ from the left and right exist and are (H\"{o}lder) continuous on $\Gamma_{3'}\cup \Gamma_{6'} \cup \Gamma_{2''}\cup \Gamma_{5''}$. This proves (b).

By \eqref{f1 is 0 on i 2 to i} and \eqref{f1 is Cinf}, $f_{1}\in C^{\infty}((0,i))$ and $f_{1}$ vanishes to all orders at $i$. Recall also from (a) that $m$ is analytic for $k\in \C\setminus (\overline{\Gamma_{3'}\cup \Gamma_{6'} \cup \Gamma_{2''}\cup \Gamma_{5''}})$. Hence it follows from \eqref{lol13} and \cite[Lemma 4.7]{CLmain} that $m(x,t,k)=O(1)$ as $k \to k_{\star}\in \{e^{-\frac{5\pi i}{6}}, e^{-\frac{\pi i}{6}}, e^{\frac{\pi i}{6}}, e^{\frac{5 \pi i}{6}}\}$. This proves (c).

By (a), $m(x,t,\omega \cdot)$ is analytic in a neighborhood of $\Gamma_{3'}\cup \Gamma_{6'}$ and $m(x,t,\omega^{2} \cdot)$ is analytic in a neighborhood of $\Gamma_{5''}\cup \Gamma_{2''}$ (recall that $\Gamma_{5''}\cup \Gamma_{2''}$ does not include $0$). Inequalities \eqref{lol12} and \eqref{phi0jestimate} imply 
\begin{align}\label{bound for mj on Gammat}
\sup_{k\in \tilde{\Gamma}_{\mathrm{m}}} |m_{j}(x,t,k)| \leq M^{-j}, \qquad j\in \N_{>0}, \; x\in \R, \; t\in [0,T],
\end{align}
and similarly we can show that
\begin{align*}
\sup_{k\in \omega\tilde{\Gamma}_{\mathrm{m}}} |m_{j}(x,t,k)| \leq M^{-j}, \qquad j\in \N_{>0}, \; x\in \R, \; t\in [0,T].
\end{align*}
By \eqref{def of n3}, we thus have 
\begin{align*}
\sup_{k\in \tilde{\Gamma}_{\mathrm{m}}\cup \omega\tilde{\Gamma}_{\mathrm{m}}} |m(x,t,k)| \leq 2, \qquad x\in \R, \; t\in [0,T].
\end{align*}
Hence, by \eqref{gtilde def}--\eqref{ghat def} and \eqref{lol15}, for $t\in [0,T)$ the function $\tilde{g}(\omega^{2} \cdot) m(x,t,\omega \cdot) \in C^{\infty}(\Gamma_{3'}\cup \Gamma_{6'})$ has fast decay at $\infty$, and the function $\hat{g}(\omega \cdot) m(x,t,\omega^{2} \cdot) \in C^{\infty}(\Gamma_{5''}\cup \Gamma_{2''})$ has fast decay at $0$. It follows from \cite[Lemma 4.7]{CLmain} and \eqref{lol13} that, for each fixed $x\in \R$ and $t\in [0,T]$,
\begin{align}\label{m near 0}
m(x,t,k) = C_{0} + O(k), \qquad \mbox{as } k \to 0,
\end{align}
uniformly for $\arg k \in [0,2\pi]$, where
\begin{align}
C_{0} & := 1+\int_{\Gamma_{3'}\cup \Gamma_{6'}} \frac{\tilde{g}(\omega^{2} k_{1})}{k_{1}} m(x,t,\omega k_{1}) \frac{dk_{1}}{2\pi i} + \int_{\Gamma_{5''}\cup \Gamma_{2''}} \frac{\hat{g}(\omega k_{1})}{k_{1}} m(x,t,\omega^{2} k_{1}) \frac{dk_{1}}{2\pi i} \nonumber \\
& = 1+\int_{\Gamma_{3'}} \frac{r_{1}(\frac{1}{\omega^{2} k_{1}})e^{\theta_{21}(\omega^{2} k_{1})}}{k_{1}} m(\omega k_{1}) \frac{dk_{1}}{2\pi i}+\int_{\Gamma_{6'}} \frac{r_{2}(\omega^{2} k_{1})e^{\theta_{21}(\omega^{2} k_{1})}}{k_{1}} m(\omega k_{1}) \frac{dk_{1}}{2\pi i} \nonumber \\
& - \int_{\Gamma_{5''}} \frac{r_{1}(\omega k_{1})e^{-\theta_{21}(\omega k_{1})}}{k_{1}} m(\omega^{2} k_{1}) \frac{dk_{1}}{2\pi i}- \int_{\Gamma_{2''}} \frac{r_{2}(\tfrac{1}{\omega k_{1}})e^{-\theta_{21}(\omega k_{1})}}{k_{1}} m(\omega^{2} k_{1}) \frac{dk_{1}}{2\pi i}. \label{def of C0}
\end{align}
For the last equality in \eqref{def of C0} we have used \eqref{ghat def}. Changing variables $k\to \frac{1}{k}$ and using \eqref{n3 sym} and $\theta_{21}(k^{-1})=-\theta_{21}(k)$, we obtain
\begin{align*}
& \int_{\Gamma_{5''}} r_{1}(\omega k_{1})e^{-\theta_{21}(\omega k_{1}) } m(\omega^{2}k_{1}) \frac{dk_{1}}{k_{1}} = \int_{\Gamma_{3'}} r_{1}(\tfrac{1}{\omega^{2} k_{1}})e^{\theta_{21}(\omega^{2} k_{1})} m(\omega k_{1}) \frac{dk_{1}}{k_{1}}, \\
& \int_{\Gamma_{2''}} r_{2}(\tfrac{1}{\omega k_{1}})e^{-\theta_{21}(\omega k_{1})} m(\omega^{2}k_{1}) \frac{dk_{1}}{k_{1}} = \int_{\Gamma_{6'}} r_{2}(\omega^{2} k_{1})e^{\theta_{21}(\omega^{2} k_{1})} m(\omega k_{1}) \frac{dk_{1}}{k_{1}},
\end{align*}
and substituting the above in \eqref{def of C0} yields $C_{0}=1$. Thus \eqref{m near 0} becomes $m(x,t,k) = 1 + O(k)$ as $k\to 0$ uniformly for $\arg k \in [0,2\pi]$, and \eqref{m near inf} now directly follows from \eqref{n3 sym}. This proves (e).
\end{proof}
%\begin{remark}\label{remark:blow up}
%
%\end{remark}

\begin{lemma}\label{lemma:n in terms of m}
Let $r_1:\hat{\Gamma}_{1} \to \C$ and $r_2: \hat{\Gamma}_{4}\setminus \{\omega^{2},-\omega^{2}\} \to \C$ be as in Proposition \ref{prop:n3 as a series via volterra}. Then for each $x\in \R$ and $t\in [0,T)$, the unique solution of RH problem \ref{RHn modified} is given by
\begin{align}\label{def of n in terms of m}
n(x,t,k) = (m(x,t,\omega k),m(x,t,\omega^{2}k),m(x,t,k)), \quad k \in \C \setminus (\Gamma_{0}\cup \Gamma_{\star}),
\end{align}
where $m$ is defined in \eqref{def of n3}.
\end{lemma}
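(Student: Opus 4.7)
Uniqueness is already granted by Theorem \ref{inverseth new}, so it suffices to check that the vector $n$ in \eqref{def of n in terms of m} verifies conditions $(\ref{RHnitema mod})$--$(\ref{RHniteme mod})$ of RH problem \ref{RHn modified}. I would begin with the two easiest conditions. The symmetries $(\ref{RHnitemd mod})$ follow by direct computation: $n(\omega k)=(m(\omega^2 k),m(k),m(\omega k))$ by $\omega^3=1$, and right multiplication by the permutation matrix $\mathcal{A}^{-1}$ restores $n(k)$; likewise, the $\mathcal{B}$-symmetry follows from the relation $m(k)=m(k^{-1})$ proved in Proposition \ref{prop:n3 as a series via volterra}(d). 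The normalization $(\ref{RHniteme mod})$ is immediate from \eqref{m near inf} applied to each of $m(\omega k)$, $m(\omega^2 k)$, $m(k)$.

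Next I would address analyticity $(\ref{RHnitema mod})$. Using that multiplication by $\omega$ acts on the rays $\{\Gamma_j\}_{j=1}^{6}$ as the cyclic permutation $j\mapsto j+2\pmod 6$ and preserves the sets $\{|k|>1\}$, $\{|k|<1\}$, one finds that $m(\omega k)$ is analytic off $\omega^2(\Gamma_{3'}\cup \Gamma_{6'}\cup \Gamma_{2''}\cup \Gamma_{5''})=\Gamma_{1'}\cup \Gamma_{4'}\cup \Gamma_{6''}\cup \Gamma_{3''}$, while $m(\omega^2 k)$ is analytic off $\Gamma_{5'}\cup \Gamma_{2'}\cup \Gamma_{4''}\cup \Gamma_{1''}$. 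The union of the three singular sets is exactly $\cup_{j=1}^{6}\Gamma_j=\Gamma_0$, giving the required analyticity. For $(\ref{RHnitemc mod})$, the set $\Gamma_\star$ consists of $0$ together with the six points $i\kappa_j\in\{i,-i,e^{\pm i\pi/6},e^{\pm 5i\pi/6}\}$. At $k=0$, all three entries of $n$ evaluate $m$ at $0$, and Proposition \ref{prop:n3 as a series via volterra}(e) gives $m(0)=1$. At each of the other six points, each argument among $k$, $\omega k$, $\omega^2 k$ lands either in $\{e^{\pm i\pi/6},e^{\pm 5i\pi/6}\}$ (where $m$ is $O(1)$ by Proposition \ref{prop:n3 as a series via volterra}(c)) or at a point of analyticity of $m$; so $n=O(1)$ at every point of $\Gamma_\star$.

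The substantive step is the jump condition $(\ref{RHnitemb mod})$. Since $m$ satisfies \eqref{lol13} (established in the proof of Proposition \ref{prop:n3 as a series via volterra}(b)) and the integrand is smooth by Proposition \ref{prop:n3 as a series via volterra}(b), the Sokhotski--Plemelj formula yields
\begin{align*}
m_+(k)-m_-(k) &= \tilde g(\omega^{2}k)\, m(\omega k), &&  k\in \Gamma_{3'}\cup\Gamma_{6'},\\
m_+(k)-m_-(k) &= \hat g(\omega k)\, m(\omega^{2}k), &&  k\in \Gamma_{2''}\cup\Gamma_{5''}.
\end{align*}
On $\Gamma_{3'}\cup\Gamma_{6'}\cup\Gamma_{2''}\cup\Gamma_{5''}$, the two other entries $m(\omega k)$ and $m(\omega^{2}k)$ are analytic (by step 2), so only $n_3$ jumps. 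Substituting \eqref{gtilde def}, \eqref{ghat def} and the identities $\theta_{31}(k)=-\theta_{21}(\omega^{2}k)$ and $\theta_{32}(k)=\theta_{21}(\omega k)$, one checks entry-by-entry that these jumps coincide with the relations read off from $v_{3'}$, $v_{6'}$, $v_{2''}$, $v_{5''}$. For example, on $\Gamma_{3'}$ one has $\omega^{2}k\in\Gamma_{1'}$, whence $\tilde g(\omega^{2}k)=r_1(\tfrac{1}{\omega^{2}k})e^{-\theta_{31}(k)}$, matching the $13$-entry of $v_{3'}$. The jumps on the remaining eight pieces $\Gamma_{1'},\Gamma_{2'},\Gamma_{4'},\Gamma_{5'},\Gamma_{1''},\Gamma_{3''},\Gamma_{4''},\Gamma_{6''}$ then follow automatically from the already-established $\mathcal{A}$-symmetry of $n$ together with the structural relation $v(x,t,k)=\mathcal{A}\, v(x,t,\omega k)\, \mathcal{A}^{-1}$ that is built into the definition \eqref{vdef}.

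The main obstacle is the pure bookkeeping of orientations, contour images under the rotations $k\mapsto \omega k$ and $k\mapsto \omega^{2}k$, and matching the signs and exponentials in $v_{j'},v_{j''}$; but the $\mathcal{A}$-symmetry reduces the verification from twelve jumps to essentially two (one outside $\D$ and one inside), so the argument remains compact.
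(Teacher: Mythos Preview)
Your proposal is correct and follows essentially the same route as the paper: invoke uniqueness from Theorem~\ref{inverseth new} and then verify conditions $(\ref{RHnitema mod})$--$(\ref{RHniteme mod})$ one by one, drawing each from the corresponding item of Proposition~\ref{prop:n3 as a series via volterra}. Your jump verification via Sokhotski--Plemelj plus the $\mathcal{A}$-symmetry reduction is just a spelled-out version of what the paper compresses into a single sentence; the only minor slip is that ``$m(0)=1$'' does not follow from item~(e) alone (which is about $k\to\infty$) but from (d) together with (e), or equivalently from \eqref{m near 0} with $C_0=1$.
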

\begin{proof}
Let $\tilde{n}$ denote the right-hand side of \eqref{def of n in terms of m}, and let $n$ be the unique solution of RH problem \ref{RHn modified}. We must prove $\tilde{n}=n$. Since the solution to RH problem \ref{RHn modified} is unique, it suffices to check that $\tilde{n}$ satisfies properties $(\ref{RHnitema mod})$--$(\ref{RHniteme mod})$ of RH problem \ref{RHn modified}. 

Items (a), (c) and (e) of Proposition \ref{prop:n3 as a series via volterra} directly imply that $\tilde{n}$ satisfies properties $(\ref{RHnitema mod})$, $(\ref{RHnitemc mod})$ and $(\ref{RHniteme mod})$ of RH problem \ref{RHn modified}, respectively.

Since $\tilde{n}_{3}(x,t,k)=\tilde{n}_{1}(x,t,\omega^{2}k)=\tilde{n}_{2}(x,t,\omega k)$, $\tilde{n}$ obeys $\tilde{n}(x,t,k) = \tilde{n}(x,t,\omega k)\mathcal{A}^{-1}$, and since $m$ satisfies \eqref{n3 sym}, we also have $\tilde{n}(x,t,k) = \tilde{n}(x,t,k^{-1}) \mathcal{B}$. This shows that $\tilde{n}$ satisfies property $(\ref{RHnitemd mod})$ of RH problem \ref{RHn modified}.

Proposition \ref{prop:n3 as a series via volterra} (b) implies that the limits of $\tilde{n}(x,t,k)$ as $k$ approaches $\Gamma_{0}$ from the left and right exist and are continuous on $\Gamma_{0}$, and Proposition \ref{prop:n3 as a series via volterra} (a) implies that $m$ satisfies 
\begin{multline*}
m(x,t,k) = 1+\int_{\tilde{\Gamma}} F(x,t,k,k_{1}) m(x,t,k_{1})dk_{1}, \\ x\in \R, \; t\in [0,T], \; k\in \C\setminus (\overline{\Gamma_{3'}\cup \Gamma_{6'} \cup \Gamma_{2''}\cup \Gamma_{5''}}),
\end{multline*}
from which
\begin{align*}
\tilde{n}_{+}(x,t,k) = \tilde{n}_{-}(x,t,k)v(x,t,k), \qquad  k \in \Gamma_{0},
\end{align*}
follows. This shows that $\tilde{n}$ satisfies property (b) of RH problem \ref{RHn modified}.

Hence $\tilde{n}$ is a solution of RH problem \ref{RHn modified}. By uniqueness of the solution, $\tilde{n}=n$, which finishes the proof.
\end{proof}

\section{Proof of Theorem \ref{thm:main}}\label{section:proof1}
In this section, $r_1:\hat{\Gamma}_{1} \to \C$ and $r_2: \hat{\Gamma}_{4}\setminus \{\omega^{2},-\omega^{2}\} \to \C$ are as in Proposition \ref{prop:n3 as a series via volterra}, and $m$ is defined by \eqref{def of n3}.

Lemma \ref{lemma:n in terms of m} together with Theorem \ref{inverseth new} implies that
\begin{align}\label{def of mp1p}
m^{(1)}(x,t) := \lim_{k\to \infty} k (m(x,t,k) -1)
\end{align}
is well-defined and smooth for $(x,t) \in \R \times [0,T)$. Moreover, $u(x,t)$ defined by
\begin{align}\label{recoveruvn 3}
u(x,t) = -i\sqrt{3}\frac{\partial}{\partial x}m^{(1)}(x,t),
\end{align}
is a Schwartz class solution of (\ref{boussinesq}) on $\R \times [0,T)$.
\begin{lemma}
For $x\in \R$ and $t\in [0,T)$, $m^{(1)}$ can be written as
\begin{align}\label{def of mp1p as a series}
& m^{(1)}(x,t) = \sum_{j=1}^{\infty} m_{j}^{(1)}(x,t),
\end{align}
where
\begin{align}\label{def of mjp1p}
& m_{j}^{(1)}(x,t) := \int_{\tilde{\Gamma}}\dots \int_{\tilde{\Gamma}} F^{(1)}(k_{1}) F(k_{1},k_{2}) \cdots F(k_{j-1},k_{j}) dk_1 \cdots dk_j,
\end{align}
and $F^{(1)}(k_{1})=F^{(1)}(x,t,k_{1})$ is given by
\begin{align}
F^{(1)}(k_{1}) & := - \bigg( \omega^{2}-\frac{\omega}{k_{1}^{2}} \bigg)  \frac{\tilde{g}(\omega k_{1})}{2\pi i} = -e^{\frac{x-x_{0}}{2}\omega k_{1}} e^{\frac{T-t}{4}(\omega k_{1})^{2}} \bigg(  \omega^{2} - \frac{\omega}{k_{1}^{2}}   \bigg)\frac{\tilde{h}_{0}(x,t,\omega k_{1})}{2\pi i}. \label{def of Fp1p}
\end{align}
\end{lemma}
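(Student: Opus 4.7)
My plan is to derive \eqref{def of mp1p as a series} by taking the large-$k$ limit term-by-term in the series $m(x,t,k) = 1 + \sum_{j\geq 1} m_j(x,t,k)$ from Proposition \ref{prop:n3 as a series via volterra}. The key ingredient is an asymptotic expansion of the kernel: using the alternative form
$$F(x,t,k,k_1) = \frac{\omega k_1 - 1/k_1}{(\omega^2 k_1-k)(\omega^2 k_1 - 1/k)} \cdot \frac{\tilde g(x,t,\omega k_1)}{2\pi i}$$
noted just before \eqref{F symm new} and expanding $(\omega^2 k_1-k)^{-1}$, $(\omega^2 k_1-1/k)^{-1}$ as $k\to\infty$, a short calculation shows
$$kF(x,t,k,k_1) = -\bigl(\omega^2 - \omega/k_1^2\bigr)\frac{\tilde g(x,t,\omega k_1)}{2\pi i} + O(k^{-1}) = F^{(1)}(x,t,k_1) + O(k^{-1}),$$
where the implicit constant is locally uniform in $k_1$ once $k_1$ is bounded away from a compact neighborhood of the cuts. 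Substituting into the definition \eqref{def of mpjp} of $m_j$ and applying dominated convergence on each of the $j$ nested integrals would then give $\lim_{k\to\infty} k\, m_j(x,t,k) = m_j^{(1)}(x,t)$ for every fixed $j\geq 1$.

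The second step is to justify interchanging the limit $k\to\infty$ with the summation over $j$. For this I would establish a uniform-in-$k$ bound of the form
$$|k\, m_j(x,t,k)| \leq M^{-(j-1)}\int_{\tilde{\Gamma}} |kF(x,t,k,k_1)|\,|dk_1| \leq C\,M^{-(j-1)}$$
valid for all $|k|$ sufficiently large, with $C$ depending on $x,t$ but not on $j$ or $k$. The first inequality is the exact analog of \eqref{bound for mpjp} combined with the $L^1$-bound \eqref{lol12}; since $f_1\equiv 0$ on $[i/2,i]$ (and hence $f_2\equiv 0$ on $[i,2i]$), the $k_1$-integration is effectively supported on $\tilde{\Gamma}_{\mathrm{m}}$. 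To control the remaining integral, I would split $\tilde{\Gamma}_{\mathrm{m}}$ into $\{|k_1|\leq |k|/2\}$ and its complement: on the first piece, both $|\omega^2k_1-k|$ and $|\omega^2 k_1-1/k|$ are comparable to $|k|$, yielding $|kF(x,t,k,k_1)|\lesssim (1+|k_1|^{-2})\,|\tilde g(x,t,\omega k_1)|$, an integrable majorant thanks to the Gaussian factor $|e^{(T-t)(\omega k_1)^2/4}| = e^{-(T-t)|k_1|^2/4}$ present in $\tilde g$ for $k_1\in\tilde{\Gamma}_{\mathrm{m}}$; on the complementary piece, that same Gaussian factor, restricted to $|k_1|>|k|/2$, already provides a contribution that is $o(1)$ as $|k|\to\infty$.

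With the summable uniform majorant $C\,M^{-(j-1)}$ in hand, dominated convergence on the series permits exchanging limit and summation:
$$m^{(1)}(x,t) = \lim_{k\to\infty} k\bigl(m(x,t,k)-1\bigr) = \sum_{j=1}^\infty \lim_{k\to\infty} k\, m_j(x,t,k) = \sum_{j=1}^\infty m_j^{(1)}(x,t),$$
which is the claimed representation. I expect the main technical obstacle to be precisely the construction of the integrable majorant for $kF(x,t,k,\cdot)$ uniformly in $|k|$ large: the $1/|k|$ decay of $kF$ is lost in the crossover region $|k_1|\sim|k|$, and it is essential to invoke the Gaussian decay of $\tilde g$ on $\tilde{\Gamma}_{\mathrm{m}}$ (which is available precisely because $t<T$) to tame that region.
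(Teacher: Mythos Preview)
Your approach is essentially the paper's: both expand $kF(k,k_1)=F^{(1)}(k_1)+O(k^{-1})$, pass the limit through each $m_j$ by dominated convergence, and then interchange limit and sum via the geometric bound $M^{-(j-1)}$ inherited from \eqref{lol12}. The only tactical difference is that the paper takes the limit specifically along $k\in(1,+\infty)$: since $\omega^2\tilde\Gamma_{\mathrm m}$ lies on the rays at angles $\pi/6$ and $-5\pi/6$, one has $|\omega^2k_1-k|\ge k/2$ and $|1/(\omega^2k_1)-k|\ge k-1/2\ge k/2$ \emph{uniformly} in $k_1\in\tilde\Gamma_{\mathrm m}$, so the remainder $\tilde F=F-F^{(1)}/k$ is immediately $O(k^{-2})$ with a $k$-independent integrable majorant; no splitting of the $k_1$-integral and no Gaussian-tail argument for the crossover region is needed. (One small slip in your write-up: on $\{|k_1|\le|k|/2\}$ the factor $|\omega^2k_1-1/k|$ is comparable to $|k_1|$, not $|k|$, though this does not affect your bound since the $|k_1|$ in the numerator of $F$ cancels it.)
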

\begin{proof}
Let $\tilde{m}^{(1)}(x,t)$ be the right-hand side of \eqref{def of mp1p as a series}. We need to prove that $m^{(1)} = \tilde{m}^{(1)}$. Using \eqref{def of F}, we write
\begin{align*}
F(x,t,k,k_{1}) = \frac{F^{(1)}(x,t,k_{1})}{k} + \tilde{F}(x,t,k,k_{1}),
\end{align*}
where
\begin{align*}
\tilde{F}(x,t,k,k_{1}) & := e^{\frac{x-x_{0}}{2}\omega k_{1}} e^{\frac{T-t}{4}(\omega k_{1})^{2}} \bigg( \frac{\omega^{2}k_{1}}{(\omega^{2}k_{1}-k)k}  \omega^{2} - \frac{\frac{1}{\omega^{2}k_{1}}}{(\frac{1}{\omega^{2}k_{1}}-k)k} \frac{\omega}{k_{1}^{2}}   \bigg)\frac{\tilde{h}_{0}(x,t,\omega k_{1})}{2\pi i},
\end{align*}
so that \eqref{def of mpjp inductively} becomes
\begin{align*}
& m_{j+1}(x,t,k) = \int_{\tilde{\Gamma}} \bigg( \frac{F^{(1)}(x,t,k_{1})}{k} + \tilde{F}(x,t,k,k_{1}) \bigg) m_{j}(x,t,k_{1}) dk_{1}.
\end{align*}
To analyze the integral involving $\tilde{F}$, we split it into two parts:
\begin{align}
& \int_{\tilde{\Gamma}} \tilde{F}(x,t,k,k_{1}) m_{j}(x,t,k_{1}) dk_{1} =: \mathcal{I}_{1,j}(x,t,k) + \mathcal{I}_{2,j}(x,t,k), \nonumber \\
& \mathcal{I}_{1,j}(x,t,k) := \int_{\tilde{\Gamma}}  e^{\frac{x-x_{0}}{2}\omega k_{1}} e^{\frac{T-t}{4}(\omega k_{1})^{2}}  \frac{\omega^{2}k_{1}}{(\omega^{2}k_{1}-k)k} \tilde{h}_{0}(x,t,\omega k_{1}) \frac{\omega^{2}}{2\pi i} m_{j}(x,t,k_{1}) dk_{1}, \label{I1j} \\
& \mathcal{I}_{2,j}(x,t,k) := \int_{\tilde{\Gamma}}  e^{\frac{x-x_{0}}{2}\omega k_{1}} e^{\frac{T-t}{4}(\omega k_{1})^{2}}  \frac{-\frac{1}{\omega^{2}k_{1}}}{(\frac{1}{\omega^{2}k_{1}}-k)k} \tilde{h}_{0}(x,t,\omega k_{1})  \frac{\omega}{2\pi i k_{1}^{2}} m_{j}(x,t,k_{1}) dk_{1}. \label{I2j}
\end{align}
Assumption \eqref{f1 is 0 on i 2 to i} implies that $\tilde{\Gamma}$ can be replaced by $\tilde{\Gamma}_{\mathrm{m}}:=(\Gamma_{2'}\cup \Gamma_{5'})\cap \{k\in \C:|k| > 2\}$ in \eqref{I1j} and \eqref{I2j}.
%For $k_{1}\in \tilde{\Gamma}$, we have
%\begin{align*}
%\inf_{s\in (1,+\infty) }|\omega^{2}k_{1}-s| \geq \frac{1}{2}|k_{1}|, \qquad \inf_{s\in (1,+\infty)}|\tfrac{1}{\omega^{2} k_{1}}-s| \geq \frac{1}{2},
%\end{align*}
%and in a similar way as in \eqref{lol10} we obtain
%\begin{align}
%\sup_{s\in (1,+\infty)} |F(x,t,s,k_{1})| & \leq   \frac{|\tilde{g}(\omega k_{1})|}{\inf_{s\in \tilde{\Gamma}_{\mathrm{m}}}|\omega^{2}k_{1}-s|}\frac{1}{2\pi} + \frac{|\hat{g}(\frac{1}{\omega k_{1}})|}{\inf_{s\in \tilde{\Gamma}_{\mathrm{m}}}|\frac{1}{\omega^{2} k_{1}}-s|}  \frac{1}{|k_{1}^{2}|}\frac{1}{2\pi} \nonumber \\
%& \leq \frac{1}{\pi}\frac{|\tilde{g}(\omega k_{1})|}{|k_{1}|} + \frac{|\hat{g}(\frac{1}{\omega k_{1}})|}{\pi}  \frac{1}{|k_{1}^{2}|} \nonumber \\
%& = e^{\frac{T-t}{4}(\omega k_{1})^{2}}\bigg( \frac{1}{ \pi}\frac{|\tilde{h}_{0}(x,t,\omega k_{1})|}{|k_{1}|} + \frac{|\hat{h}_{0}(x,t,\frac{1}{\omega k_{1}})|}{\pi |k_{1}|^{2}}  \bigg) \nonumber \\
%& \leq e^{\frac{T-t}{4}(\omega k_{1})^{2}}\bigg( \frac{1}{ \pi}\frac{|\tilde{f}_{0}(\omega k_{1})|}{|k_{1}|} + \frac{|\hat{f}_{0}(\frac{1}{\omega k_{1}})|}{\pi|k_{1}|^{2}}  \bigg), \label{lol10 bis}
%\end{align}
%so that by \eqref{phi0jestimate} and \eqref{L1 norm is less than 1 2} we have
%\begin{align*}
%\sup_{k\in (1,+\infty)} |m_{j}(x,t,k)| \leq 3 \times 2^{-j}.
%\end{align*}
For $k\in (1,+\infty)$ and $k_{1}\in \tilde{\Gamma}_{\mathrm{m}}$, we have $|\frac{1}{\omega^{2}k_{1}}-k| \geq k-\frac{1}{2} \geq \frac{k}{2}$ and $|\omega^{2}k_{1}-k| \geq \frac{k}{2}$. Hence, using also \eqref{relation betweens h0 and f0} and \eqref{bound for mj on Gammat}, we get
\begin{align*}
& |\mathcal{I}_{1,j}(x,t,k)| \leq \frac{M^{-j}}{k^{2}} \int_{\tilde{\Gamma}}  e^{\frac{T-t}{4}(\omega k_{1})^{2}}   |\tilde{f}_{0}(\omega k_{1})| \frac{|k_{1}|}{\pi} |dk_{1}|,  \\
& |\mathcal{I}_{2,j}(x,t,k)| \leq \frac{M^{-j}}{k^{2}} \int_{\tilde{\Gamma}}  e^{\frac{T-t}{4}(\omega k_{1})^{2}}    \frac{|\tilde{f}_{0}(\omega k_{1})|}{\pi |k_{1}|^{3}} |dk_{1}|.
\end{align*}
The above analysis together with \eqref{def of n3} and \eqref{def of mpjp} implies, for $k\in (1,+\infty)$, $x\in \R$ and $t\in [0,T)$, that
\begin{align*}
m(x,t,k) = 1 + \frac{\tilde{m}^{(1)}(x,t)}{k} + \mathcal{R}(x,t,k),
\end{align*}
where $\mathcal{R}$ satisfies (using $\sum_{j=1}^{+\infty}M^{-j} = \frac{1}{M-1}\leq 1$ since $M\geq 2$)
\begin{align*}
&  |\mathcal{R}(x,t,k)| \leq \sum_{j=1}^{+\infty} \big( |\mathcal{I}_{1,j}(x,t,k)|+|\mathcal{I}_{2,j}(x,t,k)| \big) \\
& \leq \frac{1}{k^{2}} \int_{\tilde{\Gamma}}  e^{\frac{T-t}{4}(\omega k_{1})^{2}}   |\tilde{f}_{0}(\omega k_{1})| \frac{|k_{1}|+\frac{1}{|k_{1}|^{3}}}{\pi} |dk_{1}|  = \frac{2}{k^{2}} \int_{1}^{+\infty}  e^{-\frac{T-t}{4}y^{2}}   |f_{1}(\tfrac{i}{y})| \frac{|y|+\frac{1}{|y|^{3}}}{\pi} dy.
\end{align*}
For the above last equality, we have used \eqref{lol 4 5 bis 2}, $f_{2}(k) := \tilde{r}(k) \overline{f_{1}(\bar{k}^{-1})}$, and the fact that $|\tilde{r}(k_{1})|=1$ for $k_{1}\in i \R$. In particular, for each fixed $x\in \R$ and $t\in [0,T)$, $\mathcal{R}(x,t,k) = O(k^{-2})$ as $k\to +\infty$, $k\in (1,+\infty)$. Thus, for each $x\in \R$ and $t\in [0,T)$,
\begin{align*}
\lim_{\substack{k\to +\infty\\ k \in (1,+\infty)}} k (m(x,t,k) -1) = \tilde{m}^{(1)}(x,t).
\end{align*}
Since the limit in \eqref{def of mp1p} can be computed along any path going to $\infty$, we have $m^{(1)} = \tilde{m}^{(1)}$, which finishes the proof.
\end{proof}
Using \eqref{relation betweens h0 and f0}, we can rewrite $F^{(1)}$ in \eqref{def of Fp1p} as
\begin{align}
F^{(1)}(x,t,k_{1}) & = -e^{\frac{x-x_{0}}{2}\omega k_{1}} e^{\frac{T-t}{4}(\omega k_{1})^{2}} e^{-\frac{x}{2}\frac{1}{\omega k_{1}}+\frac{t}{4}\frac{1}{(\omega k_{1})^{2}}} \frac{\tilde{f}_{0}(\omega k_{1})}{2\pi i} \bigg(  \omega^{2} - \frac{\omega}{k_{1}^{2}} \bigg), \label{Fp1p bis}
\end{align}
from which it follows that
\begin{align}\label{lol43}
\frac{\partial}{\partial x}F^{(1)}(x,t,k_{1}) = \frac{\omega k_{1}-\frac{1}{\omega k_{1}}}{2}F^{(1)}(x,t,k_{1}).
\end{align}
Similarly, using \eqref{relation betweens h0 and f0}--\eqref{def of F}, we get
\begin{align}\label{lol44}
\frac{\partial}{\partial x}F(x,t,k,k_{1}) = \frac{\omega k_{1}-\frac{1}{\omega k_{1}}}{2}F(x,t,k,k_{1}).
\end{align}
Hence, by differentiating \eqref{def of mjp1p} with respect to $x$, we obtain
\begin{multline}\label{mjp1p der}
\frac{\partial}{\partial x} m_{j}^{(1)}(x,t) = \int_{\tilde{\Gamma}}\dots \int_{\tilde{\Gamma}} F^{(1)}(k_{1}) F(k_{1},k_{2})
\cdots F(k_{j-1},k_{j}) \bigg(\sum_{p=1}^{j} \frac{\omega k_{p}-\frac{1}{\omega k_{p}}}{2} \bigg) dk_1 \cdots dk_j, \\
j\in \N_{>0}, \; x\in \R, \; t\in [0,T).
\end{multline}
\begin{lemma}\label{lemma: bound on derivative of mjp1p}
For $j\in \N_{>0}$, $x\in \R$ and $t\in [0,T)$,
\begin{align}
\bigg|\frac{\partial}{\partial x} m_{j}^{(1)}(x,t)\bigg| & \leq \frac{1}{M^{j-1}} \int_{1}^{+\infty} e^{-\frac{T-t}{4}y^{2}}|y f_{1}(\tfrac{i}{y})| \, dy \nonumber \\
& + \frac{j-1}{ M^{j-2}} \bigg( \int_{1}^{+\infty} e^{-\frac{T-t}{4}y^{2}}|f_{1}(\tfrac{i}{y})| \, dy \bigg)^{2}. \label{lol19}
\end{align}
\end{lemma}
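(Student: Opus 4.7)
The plan is to start from the identity \eqref{mjp1p der} and split the sum $\sum_{p=1}^{j}$ into the single term $p=1$ and the remaining $j-1$ terms with $p\geq 2$, bounding each group in turn. Both groups will be controlled by iterating the $L^{1}$-estimate \eqref{lol12} for $\sup_{s}|F(\cdot,\cdot)|$ from Proposition~\ref{prop:n3 as a series via volterra}, together with explicit bounds on $F^{(1)}$ read off from \eqref{Fp1p bis}. A useful parametrization is $\omega k_{1}=\pm iy$ with $y>1$ on $\tilde{\Gamma}=\Gamma_{2'}\cup\Gamma_{5'}$, under which all oscillatory exponentials in \eqref{Fp1p bis} have modulus one, $|\tilde{f}_{0}(\omega k_{1})|=|f_{1}(i/y)|$ (using $f_{2}=\tilde{r}\,\overline{f_{1}(\bar{\cdot}^{-1})}$ together with $|\tilde{r}(iy)|=1$), $|\omega^{2}-\omega/k_{1}^{2}|\leq 1+1/y^{2}\leq 2$, and $|\omega k_{1}-(\omega k_{1})^{-1}|=y+1/y$.

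For the $p=1$ term I would peel off the integrations over $k_{j},\ldots,k_{2}$ one at a time using \eqref{lol12}, exactly as in the proof of Proposition~\ref{prop:n3 as a series via volterra}, producing the factor $M^{-(j-1)}$. What remains is $\int_{\tilde{\Gamma}}\tfrac{|\omega k_{1}-(\omega k_{1})^{-1}|}{2}|F^{(1)}(k_{1})|\,|dk_{1}|$, and the parametrization above together with the elementary inequality $(y+1/y)(1+1/y^{2})/2\leq 2y$ for $y\geq 1$ bounds this by $\tfrac{2}{\pi}\int_{1}^{+\infty}y\,e^{-(T-t)y^{2}/4}|f_{1}(i/y)|\,dy$; since $2/\pi<1$ this is absorbed into the first summand of \eqref{lol19}.

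For the $p\geq 2$ terms, the factor $\tfrac{\omega k_{p}-(\omega k_{p})^{-1}}{2}$ is attached to an \emph{interior} node $k_{p}$, and this is the one step where the argument is not a direct copy of the proof of Proposition~\ref{prop:n3 as a series via volterra}. The key observation is that the bound \eqref{lol10} for $\sup_{s}|F(s,k_{1})|$ carries a factor $1/|k_{1}|$, precisely what is needed to absorb the linear growth in $|k_{p}|$: one obtains
\[
\tfrac{|\omega k_{p}-(\omega k_{p})^{-1}|}{2}\,\sup_{s}|F(x,t,s,k_{p})|\;\leq\;\tfrac{1+1/y^{2}}{8}\,e^{-(T-t)y^{2}/4}|f_{1}(i/y)|\;\leq\;\tfrac{1}{4}\,e^{-(T-t)y^{2}/4}|f_{1}(i/y)|\quad\text{on }\tilde{\Gamma},
\]
whose $L^{1}$-norm is at most $\tfrac{1}{2}\int_{1}^{+\infty}e^{-(T-t)y^{2}/4}|f_{1}(i/y)|\,dy$. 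With this in hand I would iterate \eqref{lol12} outward from $k_{p}$: the $k_{p+1},\ldots,k_{j}$ and $k_{p-1},\ldots,k_{2}$ integrations contribute $M^{-(j-p)}\cdot M^{-(p-2)}=M^{-(j-2)}$, while the leftmost factor is handled separately by $\int_{\tilde{\Gamma}}|F^{(1)}(k_{1})|\,|dk_{1}|\leq \tfrac{2}{\pi}\int_{1}^{+\infty}e^{-(T-t)y^{2}/4}|f_{1}(i/y)|\,dy$. Multiplying and summing over the $j-1$ values of $p\geq 2$ yields at most $\tfrac{j-1}{\pi}M^{-(j-2)}\bigl(\int_{1}^{+\infty}e^{-(T-t)y^{2}/4}|f_{1}(i/y)|\,dy\bigr)^{2}$, which is dominated by the second summand of \eqref{lol19} since $1/\pi<1$.

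The main obstacle is exactly this $p\geq 2$ case: a priori, a linearly growing factor sitting at an interior node threatens to destroy the convergence of the $L^{1}$-iteration, but the $1/|k_{1}|$ compensator already built into \eqref{lol10} neutralizes it perfectly, at the price of losing a single factor of $M$ (hence $M^{-(j-2)}$ rather than $M^{-(j-1)}$) and producing a second copy of the $L^{1}$-norm of $f_{1}(i/\cdot)$. Once that observation is in place, the remainder is bookkeeping of numerical constants, all of which ($1/\pi$, $2/\pi$, $1/4$, $1/2$) are at most $1$, yielding the clean statement of the lemma.
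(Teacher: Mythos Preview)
Your proposal is correct and follows essentially the same route as the paper: split the sum over $p$ in \eqref{mjp1p der} into $p=1$ and $p\geq 2$, peel off the inner integrations via \eqref{lol12} to obtain the powers of $M^{-1}$, and for the interior nodes absorb the linear growth in $|k_{p}|$ using the $1/|k_{1}|$ already present in \eqref{lol10}. The paper writes the bound for $\tfrac{1}{2}|\omega k_{p}-(\omega k_{p})^{-1}|$ simply as $|k_{p}|$ and packages the resulting integrals as \eqref{lol21}--\eqref{lol22}, but the argument and the numerical bookkeeping are the same as yours.
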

\begin{proof}
Using \eqref{mjp1p der} and
\begin{align*}
\bigg|\frac{\omega k_{p}-\frac{1}{\omega k_{p}}}{2}\bigg| \leq |k_{p}|, \qquad k_{p}\in \tilde{\Gamma},
\end{align*}
we obtain the following estimate, valid for $j\in \N_{>0}$, $x\in \R$ and $t\in [0,T)$:
\begin{align}
& \bigg|\frac{\partial}{\partial x} m_{j}^{(1)}(x,t)\bigg| \leq \int_{\tilde{\Gamma}} |k_{1}F^{(1)}(x,t,k_{1})| \, |dk_{1}| \; \times \; \bigg( \int_{\tilde{\Gamma}} \sup_{s\in \tilde{\Gamma}} |F(x,t,s,k_{1})|\; |dk_{1}| \bigg)^{j-1} \nonumber \\
& + (j-1) \int_{\tilde{\Gamma}} |F^{(1)}(x,t,k_{1})| \, |dk_{1}| \; \times \; \bigg( \int_{\tilde{\Gamma}} \sup_{s\in \tilde{\Gamma}} |F(x,t,s,k_{1})|\; |dk_{1}| \bigg)^{j-2} \nonumber \\
&  \; \times \; \int_{\tilde{\Gamma}} \sup_{s\in \tilde{\Gamma}} |k_{1}F(x,t,s,k_{1})|\; |dk_{1}| \nonumber \\
& \leq \frac{1}{M^{j-1}} \int_{\tilde{\Gamma}} |k_{1}F^{(1)}(x,t,k_{1})| \, |dk_{1}|  + \frac{j-1}{ M^{j-2}} \int_{\tilde{\Gamma}} |F^{(1)}(x,t,k_{1})| \, |dk_{1}| \; \nonumber \\
&  \times \; \int_{\tilde{\Gamma}} \sup_{s\in \tilde{\Gamma}} |k_{1}F(x,t,s,k_{1})|\; |dk_{1}|, \label{lol16}
\end{align}
where for the last inequality we have used \eqref{lol12}. Also, by \eqref{def of Fp1p} and \eqref{relation betweens h0 and f0}, for $k_{1}\in \tilde{\Gamma}$ we have
\begin{align}
|F^{(1)}(x,t,k_{1})| & \leq  e^{\frac{T-t}{4}(\omega k_{1})^{2}}\bigg( 1 + \frac{1}{|k_{1}|^{2}}  \bigg) \frac{|\tilde{h}_{0}(x,t,\omega k_{1})|}{2\pi } \nonumber \\
& \leq e^{\frac{T-t}{4}(\omega k_{1})^{2}}\bigg( 1 + \frac{1}{|k_{1}|^{2}}  \bigg) \frac{|\tilde{f}_{0}(\omega k_{1})|}{2\pi}. \label{lol18}
\end{align}
Using \eqref{lol10}, \eqref{lol18}, \eqref{lol 4 5 bis 2}, $f_{2}(k) := \tilde{r}(k) \overline{f_{1}(\bar{k}^{-1})}$ and the fact that $|\tilde{r}(k_{1})|=1$ for $k_{1}\in i \R$, for any $q\in \N$ we get
\begin{align}
& \int_{\tilde{\Gamma}} |k_{1}^{q}F^{(1)}(x,t,k_{1})| \, |dk_{1}| \leq \frac{2}{\pi} \int_{\Gamma_{1'}} e^{\frac{T-t}{4}k_{1}^{2}}|k_{1}^{q}f_{1}(\tfrac{1}{k_{1}})| \, |dk_{1}| \leq  \int_{1}^{+\infty} e^{-\frac{T-t}{4}y^{2}}|y^{q}f_{1}(\tfrac{i}{y})| \, dy, \label{lol21} \\
& \int_{\tilde{\Gamma}} \sup_{s\in \tilde{\Gamma}} |k_{1}^{q}F(x,t,s,k_{1})|\; |dk_{1}| \leq  \int_{1}^{+\infty} e^{-\frac{T-t}{4}y^{2}}|y^{q-1}f_{1}(\tfrac{i}{y})| \, dy, \label{lol22}
\end{align}
and substituting the above in \eqref{lol16} yields the claim.
\end{proof}
Lemma \ref{lemma: bound on derivative of mjp1p} implies that the series \eqref{def of mp1p as a series} can be differentiated with respect to $x$ termwise, i.e.
\begin{align}\label{der of mp1p as a series}
& \frac{\partial}{\partial x} m^{(1)}(x,t) =  \sum_{j=1}^{\infty} \frac{\partial}{\partial x}m_{j}^{(1)}(x,t), \qquad x\in \R, \; t \in [0,T),
\end{align}
and provides the following estimate, valid for all $x\in \R$ and $t \in [0,T)$:
\begin{align}
& \sum_{j=2}^{\infty} \bigg|\frac{\partial}{\partial x}m_{j}^{(1)}(x,t)\bigg| \nonumber \\
& \leq \frac{1}{M-1} \int_{1}^{+\infty} e^{-\frac{T-t}{4}y^{2}}|yf_{1}(\tfrac{i}{y})| \, dy + \frac{M^{2}}{(M-1)^{2}}\bigg( \int_{1}^{+\infty} e^{-\frac{T-t}{4}y^{2}}|f_{1}(\tfrac{i}{y})| \, dy \bigg)^{2}. \label{estimate for the remainder}
\end{align}
Also, by combining \eqref{mjp1p der} with $j=1$, \eqref{Fp1p bis} and \eqref{lol 4 5 bis 2}, for $x\in \R$ and $t \in [0,T)$ we get
\begin{align*}
& \frac{\partial}{\partial x} m_{1}^{(1)}(x,t) = \int_{\tilde{\Gamma}} F^{(1)}(k_{1}) \frac{\omega k_{1}-\frac{1}{\omega k_{1}}}{2}  dk_1 \\
& = - \int_{\tilde{\Gamma}} e^{\frac{x-x_{0}}{2}\omega k_{1}} e^{\frac{T-t}{4}(\omega k_{1})^{2}} e^{-\frac{x}{2}\frac{1}{\omega k_{1}}+\frac{t}{4}\frac{1}{(\omega k_{1})^{2}}} \frac{\tilde{f}_{0}(\omega k_{1})}{2\pi i} \bigg(  \omega^{2} - \frac{\omega}{k_{1}^{2}} \bigg) \frac{\omega k_{1}-\frac{1}{\omega k_{1}}}{2}  dk_1 \\
& = - \int_{\Gamma_{2'}} e^{\frac{x-x_{0}}{2}\omega k_{1}} e^{\frac{T-t}{4}(\omega k_{1})^{2}} e^{-\frac{x}{2}\frac{1}{\omega k_{1}}+\frac{t}{4}\frac{1}{(\omega k_{1})^{2}}} \frac{f_{2}(\omega k_{1})}{2\pi i}\bigg(  \omega^{2} - \frac{\omega}{k_{1}^{2}} \bigg) \frac{\omega k_{1}-\frac{1}{\omega k_{1}}}{2}  dk_1 \\
& - \int_{\Gamma_{5'}} e^{\frac{x-x_{0}}{2}\omega k_{1}} e^{\frac{T-t}{4}(\omega k_{1})^{2}} e^{-\frac{x}{2}\frac{1}{\omega k_{1}}+\frac{t}{4}\frac{1}{(\omega k_{1})^{2}}} \frac{f_{1}(\tfrac{1}{\omega k_{1}})}{2\pi i} \bigg(  \omega^{2} -  \frac{\omega}{k_{1}^{2}} \bigg) \frac{\omega k_{1}-\frac{1}{\omega k_{1}}}{2}  dk_1,
\end{align*}
and then a change of variables yields
\begin{align*}
\frac{\partial}{\partial x} m_{1}^{(1)}(x,t) & = - \int_{1}^{+\infty} e^{\frac{x-x_{0}}{2}i y} e^{-\frac{T-t}{4}y^{2}} e^{\frac{x}{2}\frac{i}{y}-\frac{t}{4}\frac{1}{y^{2}}} \frac{f_{2}(iy)}{2\pi i}\bigg(  \omega^{2} + \frac{1}{y^{2}} \bigg) \frac{iy-\frac{1}{iy}}{2} \frac{i}{\omega} dy \\
& - \int_{1}^{+\infty} e^{-\frac{x-x_{0}}{2}iy} e^{-\frac{T-t}{4} y^{2}} e^{-\frac{x}{2}\frac{i}{y}-\frac{t}{4}\frac{1}{y^{2}}} \frac{f_{1}(\tfrac{i}{y})}{2\pi i} \bigg( \omega^{2} + \frac{1}{y^{2}} \bigg) \frac{-iy+\frac{1}{iy}}{2} \frac{-i}{\omega} dy.
\end{align*}
Using $f_{2}(iy) = \tilde{r}(iy) \overline{f_{1}(\frac{i}{y})}$ and \eqref{def of tilde r}, the above integrals can be combined as
\begin{align*}
\frac{\partial}{\partial x} m_{1}^{(1)}(x,t)
= \frac{1}{\pi i} \int_{1}^{+\infty} e^{-\frac{T-t}{4} y^{2}} e^{-\frac{t}{4}\frac{1}{y^{2}}} \frac{y+\frac{1}{y}}{2} \, \re \bigg[ e^{-\frac{x-x_{0}}{2}iy} e^{-\frac{x}{2}\frac{i}{y}} f_{1}(\tfrac{i}{y}) \bigg( \omega + \frac{\omega^{2}}{y^{2}} \bigg) \bigg] dy.
\end{align*}
Since $u(x,t) := -i\sqrt{3}\frac{\partial}{\partial x}m^{(1)}(x,t)$, we have just proved the following.
\begin{theorem}\label{thm:inside the proof}
$u$ can be written as
\begin{align}
u(x,t) & = \frac{-\sqrt{3}}{\pi} \int_{1}^{+\infty} e^{-\frac{T-t}{4} y^{2}} e^{-\frac{t}{4}\frac{1}{y^{2}}} \frac{y+\frac{1}{y}}{2} \, \re \bigg[ e^{-\frac{x-x_{0}}{2}iy} e^{-\frac{x}{2}\frac{i}{y}} f_{1}(\tfrac{i}{y}) \bigg( \omega + \frac{\omega^{2}}{y^{2}} \bigg) \bigg] dy \nonumber \\
& - i \sqrt{3} \sum_{j=2}^{\infty} \frac{\partial}{\partial x}m_{j}^{(1)}(x,t), \qquad x\in \R, \; t\in [0,T), \label{lol39}
\end{align}
and the estimate \eqref{estimate for the remainder} holds for all $(x,t)\in \R \times [0,T)$.
\end{theorem}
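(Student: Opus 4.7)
The plan is to package the computations carried out in the present section into a single statement. Concretely, I would argue as follows.

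First, I would invoke the recovery formula $u(x,t) = -i\sqrt{3}\,\partial_x m^{(1)}(x,t)$ together with the series representation $m^{(1)}(x,t) = \sum_{j=1}^\infty m_j^{(1)}(x,t)$ established in \eqref{def of mp1p as a series}. The uniform-in-$j$ bound \eqref{lol19} of Lemma \ref{lemma: bound on derivative of mjp1p} is summable in $j$ (geometric ratio $M^{-1}\le \tfrac12$), which legitimises the termwise differentiation \eqref{der of mp1p as a series} and simultaneously yields the remainder estimate \eqref{estimate for the remainder} by summing the geometric series $\sum_{j\ge 2} M^{-(j-1)}$ and $\sum_{j\ge 2} (j-1) M^{-(j-2)}$.

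Next I would isolate the $j=1$ contribution $\partial_x m_1^{(1)}(x,t)$. Starting from \eqref{def of mjp1p} with $j=1$, I substitute the explicit expression \eqref{Fp1p bis} for $F^{(1)}$ and the factor $\tfrac12(\omega k_1-\tfrac{1}{\omega k_1})$ coming from \eqref{mjp1p der}, then split $\tilde{\Gamma} = \Gamma_{2'}\cup\Gamma_{5'}$ and use the definition \eqref{lol 4 5 bis 2} of $\tilde{f}_0$ to replace it by $f_2$ on $\Gamma_{2'}$ and by $f_1(1/\cdot)$ on $\Gamma_{5'}$. Parametrising $\Gamma_{2'}$ by $k_1 = iy/\omega$ and $\Gamma_{5'}$ by $k_1 = -iy/\omega$ with $y\in(1,+\infty)$ produces two line integrals over $(1,+\infty)$ with integrands that are complex conjugates of each other, once I apply the identity $f_2(iy) = \tilde r(iy)\overline{f_1(i/y)}$ from \eqref{r1r2 relation with kbar symmetry} and the fact that $|\tilde r(iy)| = 1$ for $y\in\R$ (visible from \eqref{def of tilde r}). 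Combining the two integrals gives a single integral of a real part, matching the first line of \eqref{lol39} after multiplying by $-i\sqrt{3}$.

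Finally, I would collect: $u = -i\sqrt{3}\,\partial_x m_1^{(1)} - i\sqrt{3}\sum_{j\ge 2}\partial_x m_j^{(1)}$, where the first term equals the explicit integral in \eqref{lol39}, and the tail satisfies the bound \eqref{estimate for the remainder}. The main bookkeeping obstacle is the change of variables and conjugate-symmetry step in the second paragraph: one must track the factors $i/\omega$ coming from $dk_1$, the terms $\omega^2 + 1/y^2$, and the sign of $iy - 1/(iy)$ on the two half-rays $\Gamma_{2'}$ and $\Gamma_{5'}$ carefully so that the two contributions assemble into $2\,\mathrm{Re}[\cdots]$ with the correct prefactor $1/(\pi i)\cdot(-i\sqrt3) = -\sqrt3/\pi$. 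Everything else is a direct restatement of results proved immediately before the theorem.
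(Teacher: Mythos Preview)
Your proposal is correct and follows essentially the same route as the paper: the theorem is indeed a packaging of the computations developed just before its statement, and you have identified the same ingredients (termwise differentiation via Lemma \ref{lemma: bound on derivative of mjp1p}, summation of the two geometric-type series for \eqref{estimate for the remainder}, explicit evaluation of $\partial_x m_1^{(1)}$ by splitting $\tilde\Gamma=\Gamma_{2'}\cup\Gamma_{5'}$, parametrising, and pairing via $f_2(iy)=\tilde r(iy)\overline{f_1(i/y)}$). One small sharpening: when you combine the two line integrals, the fact $|\tilde r(iy)|=1$ is not by itself what makes the pairing work; you actually need the algebraic identity $\tilde r(iy)(\omega^2+y^{-2})=1+\omega^2 y^{-2}$, which together with $1/\omega=\omega^2$ turns the $\Gamma_{2'}$ integrand into the complex conjugate of the $\Gamma_{5'}$ one and produces the factor $\omega+\omega^2/y^2$ inside the real part.
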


For convenience, we introduce the notation 
\begin{align*}
\ell:= \frac{2}{\sqrt{T-t}}.
\end{align*}
\begin{lemma}\label{lemma:precise estimates for u}
Let $p\in \N$, $\vec{r} \in \N_{\mathrm{ord}}^{p}$, $\vec{a} \in \R_{+}^{p}$, $\sigma\in \{0,1\}$, and define $\mathrm{LOG}(y) = \mathrm{LOG}(y;\vec{r},\vec{a},\sigma)$ as in \eqref{def of LOG}. Suppose that $f_{1}$ is as in Proposition \ref{prop:n3 as a series via volterra}. Assume furthermore that $f_{1}(\frac{i}{y}) = y^{\eta} \mathrm{LOG}(y)$ for all $y\geq y_{0}$ for some $y_{0}>1$, and where $\eta\in (-2,0)$. Then, as $t\to T$, and uniformly for $x\in \R$,
\begin{multline}
\frac{-\sqrt{3}}{\pi} \int_{1}^{+\infty} e^{-\frac{y^{2}}{\ell^{2}} } e^{-\frac{t}{4}\frac{1}{y^{2}}} \frac{y+\frac{1}{y}}{2} \, \re \bigg[ e^{-\frac{x_{0}}{2}\frac{i}{y}} f_{1}(\tfrac{i}{y}) \bigg( \omega + \frac{\omega^{2}}{y^{2}} \bigg) \bigg] dy \\
= (1+o(1)) \frac{\sqrt{3}}{4\pi}  \ell^{\eta+2} \mathrm{LOG}(\ell) \int_{0}^{+\infty} e^{-y^{2}} y^{\eta+1}dy \label{lol20}
\end{multline}
and 
\begin{align}\label{lol34}
\sqrt{3} \sum_{j=2}^{\infty} \bigg|\frac{\partial}{\partial x}m_{j}^{(1)}(x,t)\bigg| \leq \frac{\sqrt{3}}{M-1} (1+o(1)) \ell^{\eta+2} \mathrm{LOG}(\ell) \int_{0}^{+\infty} e^{-y^{2}} y^{\eta+1}dy.
\end{align}
Moreover, there exists $C>0$ such that, for any $x\in \R\setminus \{x_{0}\}$ and $t\in [0, T)$,
\begin{align}
%& \bigg|\frac{\sqrt{3}}{\pi} \int_{1}^{+\infty} e^{-\frac{y^{2}}{\ell^{2}}} e^{-\frac{t}{4}\frac{1}{y^{2}}} \frac{y+\frac{1}{y}}{2} \, \re \bigg[ e^{-\frac{x-x_{0}}{2}iy} e^{-\frac{x}{2}\frac{i}{y}} f_{1}(\tfrac{i}{y}) \bigg( \omega + \frac{\omega^{2}}{y^{2}} \bigg) \bigg] dy \bigg| \leq C \frac{1+x^{2}}{(x-x_{0})^{2}}. \label{lol20 x new x0} \\
& \sqrt{3} \sum_{j=1}^{\infty} \bigg|\frac{\partial}{\partial x}m_{j}^{(1)}(x,t)\bigg| \leq C \frac{1+x^{2}}{(x-x_{0})^{2}}. \label{lol36}
\end{align}
If $\eta \in (-2,-1)$, then \eqref{lol36} can be replaced by
\begin{align}\label{lol36 bis}
\sqrt{3} \sum_{j=2}^{\infty} \bigg|\frac{\partial}{\partial x}m_{j}^{(1)}(x,t)\bigg| \leq C \frac{1+|x|}{|x-x_{0}|}.
\end{align}
\end{lemma}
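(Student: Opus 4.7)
The plan is to prove \eqref{lol20} and \eqref{lol34} by the substitution $y = \ell u$ (so that $e^{-(T-t)y^{2}/4} = e^{-u^{2}}$ and $dy = \ell \, du$), and to prove the uniform bounds \eqref{lol36}, \eqref{lol36 bis} by integrating by parts against the oscillatory factor $e^{-i(x-x_{0})y/2}$ present in the $j=1$ integral of Theorem \ref{thm:inside the proof}, together with its counterpart $e^{(x-x_{0})\omega k_{1}/2}$ inside $F^{(1)}(k_{1})$ for the terms with $j\geq 2$.

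For \eqref{lol20}, after substituting $y = \ell u$ the integration range $(1/\ell,\infty)$ converges to $(0,\infty)$. Pointwise in $u>0$, the factors $e^{-t/(4y^{2})} \to 1$, $\tfrac{y+1/y}{2} \sim \ell u/2$, $e^{-ix_{0}/(2y)} \to 1$, $\omega + \omega^{2}/y^{2} \to \omega$, and $f_{1}(i/y) = (\ell u)^{\eta}\mathrm{LOG}(\ell u)$ where the slowly varying factor satisfies $\mathrm{LOG}(\ell u)/\mathrm{LOG}(\ell) \to 1$. Combining with $\re \omega = -\tfrac{1}{2}$ and dominated convergence yields \eqref{lol20}. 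For \eqref{lol34}, I would apply \eqref{estimate for the remainder} and rescale: the first integral gives $(1+o(1))\ell^{\eta+2}\mathrm{LOG}(\ell)\int_{0}^{\infty} u^{\eta+1}e^{-u^{2}}du$, while the square of the second is $O(\ell^{2\eta+2}\mathrm{LOG}(\ell)^{2}) = o(\ell^{\eta+2}\mathrm{LOG}(\ell))$ since $\eta < 0$, producing the stated bound with constant $\sqrt{3}/(M-1)$.

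To prove \eqref{lol36}, the key observation is that the $j=1$ integrand of Theorem \ref{thm:inside the proof} has the form $G(y)\tilde{H}(x,y) e^{-i(x-x_{0})y/2}$ with $G(y) := e^{-(T-t)y^{2}/4}e^{-t/(4y^{2})}\tfrac{y+1/y}{2}$ and $\tilde{H}(x,y) := e^{-ix/(2y)}f_{1}(i/y)(\omega + \omega^{2}/y^{2})$, and that all boundary terms in integration by parts vanish: at infinity by the Gaussian decay of $G$, and near $y=1$ because $f_{1}(i/y)=0$ for $y\in [1,2]$ by \eqref{f1 is 0 on i 2 to i}. Two integrations by parts give $|\partial_{x}m_{1}^{(1)}(x,t)| \leq \frac{4}{(x-x_{0})^{2}}\int_{1}^{\infty} |\partial_{y}^{2}(G\tilde{H})(y)|\,dy$. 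The dominant contribution of $\partial_{y}^{k}G$ is $(-(T-t)y/2)^{k}G$, and that of $\partial_{y}^{k}\tilde{H}$ is $(ix/(2y^{2}))^{k}\tilde{H}$; rescaling $y=\ell u$ then shows $\int|\partial_{y}^{2}(G\tilde{H})|\,dy \leq C(1+x^{2})$ uniformly in $t\in[0,T)$, since $\ell^{\eta}\mathrm{LOG}(\ell)\to 0$ while the $x^{2}$ contribution reduces to the $t$-independent integral $\int_{1}^{\infty} y^{\eta-3}\mathrm{LOG}(y)\,dy < \infty$.

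For $j\geq 2$, the same integration-by-parts scheme is applied in $k_{1}$ using the factor $e^{(x-x_{0})\omega k_{1}/2}$ of $F^{(1)}(k_{1})$; the boundary at $|k_{1}|=2$ vanishes because the vanishing of $f_{1}$ near $\pm i$ forces $\tilde{f}_{0}(\omega k_{1})=0$ there, and the resulting integrand $\partial_{k_{1}}^{2}(F^{(1)}(k_{1})F(k_{1},k_{2}))$ is controlled together with the subsequent $k_{2},\ldots,k_{j}$ integrals by a geometric-series argument mimicking the proof of Proposition \ref{prop:n3 as a series via volterra}, yielding $\sum_{j\geq 2}|\partial_{x}m_{j}^{(1)}| \leq C(1+x^{2})/(x-x_{0})^{2}$ uniformly in $t$. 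For \eqref{lol36 bis}, the stronger hypothesis $\eta \in (-2,-1)$ makes $\ell^{\eta+1}\mathrm{LOG}(\ell) \to 0$, so a single integration by parts already produces the improved bound $C(1+|x|)/|x-x_{0}|$. I expect the main technical obstacle to be the bookkeeping in the $k_{1}$-integration by parts for $j\geq 2$, namely verifying that differentiating the rational kernel of $F(k_{1},k_{2})$ in $k_{1}$ (and the Gaussian factor $e^{(T-t)(\omega k_{1})^{2}/4}$ in $F^{(1)}(k_{1})$) preserves the geometric summability estimate \eqref{lol12} needed for the resulting series in $j$ to converge uniformly in $x$ and $t$.
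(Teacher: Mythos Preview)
Your approach to \eqref{lol20} and \eqref{lol34} via the rescaling $y=\ell u$ is the same as the paper's in spirit; the paper just makes the dominated-convergence step explicit by splitting at $L=\ell/\log\ell$ and proving $\mathrm{LOG}(\ell u)=\mathrm{LOG}(\ell)(1+O(\log u/\log\ell))$ uniformly on $(1/\log\ell,\log\ell)$ together with a crude tail bound $|\mathrm{LOG}(\ell u)|\le |\mathrm{LOG}(\ell)|\prod_{j}(1+\log u)^{a_{j}}$ for $u\ge e$. Your argument for \eqref{lol34} (first integral in \eqref{estimate for the remainder} gives the leading term, the squared one is lower order since $\eta<0$) matches the paper exactly.

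There is a genuine gap in your treatment of \eqref{lol36} for the range $\eta\in(-1,0)$. By \eqref{mjp1p der}, $\partial_{x}m_{j}^{(1)}$ carries the factor $\sum_{p=1}^{j}\tfrac{1}{2}(\omega k_{p}-\tfrac{1}{\omega k_{p}})$, so there is an extra $|k_{p}|$ on \emph{each} variable, not only $k_{1}$. Integrating by parts only in $k_{1}$ leaves, for the $p\ge 2$ pieces, a $k_{p}$-integral bounded by $\int_{\tilde\Gamma}\sup_{s}|k_{p}F(s,k_{p})|\,|dk_{p}|\lesssim \int_{1}^{\infty}e^{-(T-t)y^{2}/4}|f_{1}(i/y)|\,dy$ (this is \eqref{lol22} with $q=1$), and for $\eta\in(-1,0)$ this is $\asymp \ell^{\eta+1}\mathrm{LOG}(\ell)\to\infty$, so the geometric-series bound you invoke is not uniform in $t$. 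The paper avoids this by writing $\partial_{x}m_{j}^{(1)}=\sum_{p=1}^{j}(\mathcal{J}_{j,p}^{(1)}-\mathcal{J}_{j,p}^{(2)})$ and, for each $p$, integrating by parts in $k_{p}$: the oscillatory factor $e^{(x-x_{0})\omega k_{p}/2}$ sits in $F(k_{p-1},k_{p})$ by \eqref{def of F}, so after rescaling $k_{p}\to\ell k_{p}$ the two integrations by parts simultaneously produce $(x-x_{0})^{-2}\ell^{-2}$ and tame the extra $k_{p}$, yielding $|\mathcal{J}_{j,p}^{(n)}|\le C(1+x^{2})(x-x_{0})^{-2}M^{-(j-1)}$ uniformly in $t$, which is summable in both $j$ and $p\le j$. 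For \eqref{lol36 bis} your one-IBP-in-$k_{1}$ scheme does go through, since $\eta<-1$ makes $\int_{1}^{\infty}|f_{1}(i/y)|\,dy<\infty$ and the problematic $k_{p}$-integral is then bounded uniformly; the paper nonetheless treats that case by the same $p$-by-$p$ integration by parts.
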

\begin{proof}
By \eqref{L1 norm is less than 1 2}, $\int_{1}^{+\infty} \frac{|f_{1}(\frac{i}{y})|}{y}dy<+\infty$. Thus, as $t \to T$,
\begin{align}
& \frac{-\sqrt{3}}{\pi} \int_{1}^{+\infty} e^{-\frac{y^{2}}{\ell^{2}} } e^{-\frac{t}{4}\frac{1}{y^{2}}} \frac{y+\frac{1}{y}}{2} \, \re \bigg[ e^{-\frac{x_{0}}{2}\frac{i}{y}} f_{1}(\tfrac{i}{y}) \bigg( \omega + \frac{\omega^{2}}{y^{2}} \bigg) \bigg] dy \nonumber \\
& = \frac{-\sqrt{3}}{\pi} \int_{1}^{+\infty} e^{-\frac{y^{2}}{\ell^{2}} } e^{-\frac{t}{4}\frac{1}{y^{2}}} \frac{y}{2} \, \re \bigg[ e^{-\frac{x_{0}}{2}\frac{i}{y}} f_{1}(\tfrac{i}{y})  \omega \bigg] dy + O(1). \label{lol28} %\\
%& = \frac{-\sqrt{3}}{\pi} \ell^{2} \int_{1/\ell}^{+\infty} e^{-y^{2}} e^{-\frac{t}{4}\frac{1}{y^{2}\ell^{2}}} \frac{y}{2} \, \re \bigg[ e^{-\frac{x}{2}\frac{i}{y \ell}} f_{1}(\tfrac{i}{y \ell})  \omega \bigg] dy + O(1). 
\end{align}
%Let $L=L(t)=\frac{\ell}{\log \ell}$. In particular, $L\to + \infty$ and $L/\ell \to 0$ as $t \to T$. 
Let us split the integral on the right-hand side of \eqref{lol28} according to
\begin{align*}
\int_{1}^{+\infty} = \int_{1}^{L} + \int_{L}^{+\infty},
\end{align*}
where $L=L(t):=\frac{\ell}{\log \ell}$. Since $\vec{a} \in \R_{+}^{p}$, $y\mapsto \mathrm{LOG}(y)$ is non-decreasing for all sufficiently large $y>0$. Using also that $f_{1}(\frac{i}{y}) = y^{\eta} \mathrm{LOG}(y)$ for all $y\geq y_{0}$ and for some $\eta\in (-2,0)$, the integral over $(1,L)$ can be upper bounded as follows
\begin{multline}\label{lol41}
\bigg| \frac{-\sqrt{3}}{\pi} \int_{1}^{L} e^{-\frac{y^{2}}{\ell^{2}} } e^{-\frac{t}{4}\frac{1}{y^{2}}} \frac{y}{2} \, \re \bigg[ e^{-\frac{x_{0}}{2}\frac{i}{y}} f_{1}(\tfrac{i}{y})  \omega \bigg] dy \bigg| \leq \frac{\sqrt{3}}{2\pi} \int_{1}^{L} y |f_{1}(\tfrac{i}{y})| dy \\
\leq \frac{\sqrt{3}}{2\pi} \frac{L^{\eta + 2}}{\eta+2} \mathrm{LOG}(L) \big( 1+o(1) \big), \qquad \mbox{as } t \to T,
\end{multline}
and the integral over $(L,+\infty)$ admits the following expansion as $t\to T$:
\begin{align}
& \frac{-\sqrt{3}}{\pi} \int_{L}^{+\infty} e^{-\frac{y^{2}}{\ell^{2}} } e^{-\frac{t}{4}\frac{1}{y^{2}}} \frac{y}{2} \, \re \bigg[ e^{-\frac{x_{0}}{2}\frac{i}{y}} f_{1}(\tfrac{i}{y})  \omega \bigg] dy \nonumber \\
& = \frac{-\sqrt{3}}{\pi} \int_{L}^{+\infty} e^{-\frac{y^{2}}{\ell^{2}} } \frac{y}{2} \, \re \bigg[ f_{1}(\tfrac{i}{y})  \omega \big(1+O(y^{-1})\big) \bigg] dy \nonumber \\
& = (1+o(1)) \frac{\sqrt{3}}{4\pi}  \int_{L}^{+\infty} e^{-\frac{y^{2}}{\ell^{2}} } y^{\eta+1}\mathrm{LOG}(y)dy = (1+o(1))\frac{\sqrt{3}}{4\pi}  \ell^{\eta+2} \int_{L/\ell}^{+\infty} e^{-y^{2}} y^{\eta+1}\mathrm{LOG}(\ell y)dy \nonumber \\
& = (1+o(1))\frac{\sqrt{3}}{4\pi}  \ell^{\eta+2} \bigg( \mathrm{LOG}(\ell) \int_{L/\ell}^{+\infty} e^{-y^{2}} y^{\eta+1}dy + \mathcal{J} \bigg), \nonumber \\
& = (1+o(1))\frac{\sqrt{3}}{4\pi}  \ell^{\eta+2} \bigg( \mathrm{LOG}(\ell) \int_{0}^{+\infty} e^{-y^{2}} y^{\eta+1}dy + \mathcal{J} \bigg), \label{lol29}
\end{align}
where $\mathcal{J}:=\int_{L/\ell}^{+\infty} e^{-y^{2}} y^{\eta+1}\big( \mathrm{LOG}(\ell y)-\mathrm{LOG}(\ell) \big)dy$. If $p=0$ or if $\vec{a}=(0,\ldots,0)$, then $\mathrm{LOG}(s) \equiv (-1)^{\sigma}$ and $\mathcal{J}=0$, and \eqref{lol20} directly follows by combining \eqref{lol29} with \eqref{lol28} and \eqref{lol41}. 

Suppose now that $p\in \N_{>0}$ and $a_{j}>0$ for some $j\in \{1,\ldots,p\}$. We write $\mathcal{J} = \mathcal{J}_{1}+\mathcal{J}_{2}$, where 
\begin{align*}
& \mathcal{J}_{1} := \int_{L/\ell}^{\log \ell} e^{-y^{2}} y^{\eta+1}\big( \mathrm{LOG}(\ell y)-\mathrm{LOG}(\ell) \big)dy,\\
& \mathcal{J}_{2} := \int_{\log \ell}^{+\infty} e^{-y^{2}} y^{\eta+1}\big( \mathrm{LOG}(\ell y)-\mathrm{LOG}(\ell) \big)dy.
\end{align*}
For each $r\in \N_{>0}$, define $\exp_{r}:\R\to \R$ by
\begin{align*}
\exp_{r}(y):=\exp(\exp_{r-1}(y)), \qquad \exp_{0}(y):=y,
\end{align*}
and for each $r\in \N_{>0}$ and $\ell > \exp_{r}(0)$, define also $z_{r}:(-\log_{r}\ell,+\infty)\to \R$ by
\begin{align*}
z_{r}(y):=\log(1+\tfrac{y}{\log_{r}\ell}), 
\end{align*}
which is strictly increasing on its domain of definition. Let $\circ$ denote function composition. Iterating the identity $\log(ab) = \log a + \log b$, we get
\begin{multline}\label{lol30}
\log_{r}(\ell y) = \log_{r} \ell + z_{r-1} \circ z_{r-2} \circ \dots \circ z_{1}(\log y), \\
\mbox{for all } r\geq2, \; \ell > \exp_{r-1}(0), \; y>\exp_{r-1}(0)/\ell.
\end{multline}
For $\ell>\exp_{r+1}(0)$, we have $\log_{r} \ell >1$ and thus $z_{r}(y) \leq \log (1+y) \leq y$ for all $y \geq 0$. Hence
\begin{multline*}
\log_{r}(\ell y) \leq (\log_{r}\ell) \; \bigg( 1+\frac{\log y}{\log_{r} \ell} \bigg) \leq (\log_{r}\ell) \; \big( 1+\log y \big), \\
\mbox{for all } r\in \N_{>0}, \; \ell>\exp_{r+1}(0), \; y \geq e.
\end{multline*} 
Since $a_{1},\ldots,a_{p}\in \R_{+}$, by \eqref{def of LOG} we also have
\begin{align*}
(-1)^{\sigma}\mathrm{LOG}(\ell y) \leq (-1)^{\sigma}\mathrm{LOG}(\ell) \prod_{j=1}^{p} \Big( 1+\log y \Big)^{a_{j}}, \qquad \mbox{for all } \ell > \exp_{1+r_{p}}(0), \; y \geq e,
\end{align*}
which implies
\begin{align}\label{J2 bound}
|\mathcal{J}_{2}| \leq |\mathrm{LOG}(\ell)| \int_{\log \ell}^{+\infty} e^{-y^{2}} y^{\eta+1}\bigg( \prod_{j=1}^{p} \Big( 1+\log y \Big)^{a_{j}} + 1 \bigg)dy = O \bigg( \frac{|\mathrm{LOG}(\ell)|}{e^{\frac{1}{2}(\log \ell)^{2}}} \bigg)
\end{align}
as $\ell \to + \infty$. 
Equality \eqref{lol30} also implies that for all $y=y(\ell)>0$ such as $\frac{\log y}{\log \ell} \to 0$ as $\ell \to +\infty$, we have
\begin{align}\label{iterated log asymp}
\log_{r}(\ell y) = \log_{r} (\ell) \; \bigg(1 + O\bigg(\frac{\log y}{\prod_{n=1}^{r}\log_{n} \ell} \bigg) \bigg),
\end{align}
and thus
\begin{align}\label{lol32}
\mathrm{LOG}(\ell y) = \mathrm{LOG}(\ell) \bigg(1 + O\bigg(\frac{\log y}{\log \ell} \bigg) \bigg), \qquad \mbox{as } \ell \to + \infty
\end{align}
holds uniformly for $y \in (L/\ell,\log \ell)$ (recall $L/\ell = 1/\log \ell$). The asymptotic formula \eqref{lol32} yields the estimate
\begin{align}\label{J1 bound}
|\mathcal{J}_{1}| \leq  \frac{C \; \mathrm{LOG}(\ell)}{\log \ell} \int_{\frac{1}{\log \ell}}^{\log \ell} e^{-y^{2}} y^{\eta+1}(\log y) \; dy = O\bigg( \frac{\mathrm{LOG}(\ell)}{\log \ell} \bigg).
\end{align}
By \eqref{J2 bound} and \eqref{J1 bound}, we have 
\begin{align*}
|\mathcal{J}| = O\bigg( \frac{\mathrm{LOG}(\ell)}{\log \ell} \bigg),
\end{align*}
and substituting this in \eqref{lol29} yields
\begin{multline}
\frac{-\sqrt{3}}{\pi} \int_{L}^{+\infty} e^{-\frac{y^{2}}{\ell^{2}} } e^{-\frac{t}{4}\frac{1}{y^{2}}} \frac{y}{2} \, \re \bigg[ e^{-\frac{x}{2}\frac{i}{y}} f_{1}(\tfrac{i}{y})  \omega \bigg] dy \\ =  (1+o(1)) \frac{\sqrt{3}}{4\pi}  \ell^{\eta+2} \mathrm{LOG}(\ell) \int_{0}^{+\infty} e^{-y^{2}} y^{\eta+1}dy. \label{lol31}
\end{multline}
Now \eqref{lol20} directly follows by combining \eqref{lol31} with \eqref{lol28} and \eqref{lol41}.

Since $f_{1}(\frac{i}{y}) = y^{\eta} \mathrm{LOG}(y) \in \R$ for all $y\geq y_{0}$, we obtain (in a similar way as for \eqref{lol20}) 
\begin{align}
\int_{1}^{+\infty} e^{-\frac{y^{2}}{\ell^{2}}}|y f_{1}(\tfrac{i}{y})| \, dy & = (1+o(1)) \int_{L}^{+\infty} e^{-\frac{y^{2}}{\ell^{2}}}y^{\eta+1} \mathrm{LOG}(y) \, dy \nonumber \\
& = (1+o(1)) \ell^{\eta+2} \mathrm{LOG}(\ell) \int_{0}^{+\infty} e^{-y^{2}} y^{\eta+1}dy, \label{lol17} \\
\int_{1}^{+\infty} e^{-\frac{y^{2}}{\ell^{2}}}| f_{1}(\tfrac{i}{y})| \, dy & = O\big(1+\ell^{\eta+1}\mathrm{LOG}(\ell)\big). \label{lol33}
\end{align}
Substituting \eqref{lol17} and \eqref{lol33} in \eqref{estimate for the remainder}, we find \eqref{lol34}.

We now turn to the proof of \eqref{lol36}. By \eqref{mjp1p der}, for $j\in \N_{>0}$, $x\in \R$ and $t\in [0,T)$,
\begin{align*}
& \frac{\partial}{\partial x} m_{j}^{(1)}(x,t) = \sum_{p=1}^{j} (\mathcal{J}_{j,p}^{(1)}-\mathcal{J}_{j,p}^{(2)})  \\
& \mathcal{J}_{j,p}^{(1)} := \int_{\tilde{\Gamma}}\dots \int_{\tilde{\Gamma}} F^{(1)}(k_{1}) F(k_{1},k_{2})
\cdots F(k_{j-1},k_{j}) \frac{\omega k_{p}}{2}  dk_1 \cdots dk_j,  \\
& \mathcal{J}_{j,p}^{(2)} := \int_{\tilde{\Gamma}}\dots \int_{\tilde{\Gamma}} F^{(1)}(k_{1}) F(k_{1},k_{2})
\cdots F(k_{j-1},k_{j}) \frac{1}{2\omega k_{p}}  dk_1 \cdots dk_j.
\end{align*}
Let us define $G^{(1)}(k_{1})=G^{(1)}(x,t,k_{1})$ by
\begin{align}
G^{(1)}(k_{1}) & = - e^{\frac{(\omega k_{1})^{2}}{\ell^{2}}} e^{-\frac{x}{2}\frac{1}{\omega k_{1}}+\frac{t}{4}\frac{1}{(\omega k_{1})^{2}}} \frac{\tilde{f}_{0}(\omega k_{1})}{2\pi i} \bigg(  \omega^{2} - \frac{\omega}{k_{1}^{2}} \bigg), \label{Gp1p bis}
\end{align}
so that, by \eqref{Fp1p bis},
\begin{align*}
F^{(1)}(k_{1}) = e^{\frac{x-x_{0}}{2}\omega k_{1}} G^{(1)}(k_{1}).
\end{align*}
Define also
\begin{align}\label{lol42}
H_{k_{2}}^{(1)}(k_{1}) := \ell k_{1} G^{(1)}(\ell k_{1})F(\ell k_{1},k_{2}).
\end{align}
Using \eqref{lol42} and the change of variables $k_{1}\to \ell k_{1}$, $\mathcal{J}_{j,1}^{(1)}$ can be rewritten as
\begin{align*}
\mathcal{J}_{j,1}^{(1)} & = \frac{\omega}{2} \int_{\tilde{\Gamma}}\dots \int_{\tilde{\Gamma}} e^{\frac{x-x_{0}}{2}\omega k_{1}} H_{k_{2}}^{(1)}(\tfrac{k_{1}}{\ell}) F(k_{2},k_{3})
\cdots F(k_{j-1},k_{j})   dk_1 \cdots dk_j \\
& = \frac{\omega}{2} \int_{\tilde{\Gamma}/\ell} \int_{\tilde{\Gamma}}\dots \int_{\tilde{\Gamma}} e^{\frac{x-x_{0}}{2}\ell \omega k_{1}} H_{k_{2}}^{(1)}(k_{1}) F(k_{2},k_{3})
\cdots F(k_{j-1},k_{j})   \ell dk_1 \cdots dk_j.
\end{align*}
Suppose $x\neq x_{0}$. Since $H_{k_{2}}^{(1)}(k_{1})$ has fast decay as 
\begin{align*}
\tilde{\Gamma}/\ell \ni k_{1} \to k_{\star} \in \partial (\tilde{\Gamma}/\ell):=\{e^{-\frac{\pi i}{6}}/\ell,e^{-\frac{\pi i}{6}}\infty,e^{\frac{5\pi i}{6}}/\ell,e^{\frac{5\pi i}{6}}\infty \},
\end{align*}
by using twice integration by parts, we get
\begin{align*}
\mathcal{J}_{j,1}^{(1)} & =  \frac{\frac{\omega}{2}}{(\frac{x-x_{0}}{2}\ell \omega)^{2}} \int_{\tilde{\Gamma}/\ell} \int_{\tilde{\Gamma}}\dots \int_{\tilde{\Gamma}} e^{\frac{x-x_{0}}{2}\ell \omega k_{1}} (H_{k_{2}}^{(1)})''(k_{1}) F(k_{2},k_{3})
\cdots F(k_{j-1},k_{j})   \ell dk_1 \cdots dk_j \\
& =  \frac{\frac{\omega}{2}}{(\frac{x-x_{0}}{2}\ell \omega)^{2}} \int_{\tilde{\Gamma}}\dots \int_{\tilde{\Gamma}} e^{\frac{x-x_{0}}{2} \omega k_{1}} (H_{k_{2}}^{(1)})''(\tfrac{k_{1}}{\ell}) F(k_{2},k_{3}) \cdots F(k_{j-1},k_{j})  dk_1 \cdots dk_j.
\end{align*}
An explicit computation of $(H_{k_{2}}^{(1)})''(\tfrac{k_{1}}{\ell})$ then yields $\mathcal{J}_{j,1}^{(1)}=\mathcal{H}_{j,1}+\mathcal{H}_{j,2}+\mathcal{H}_{j,3}$, where
\begin{align*}
& \mathcal{H}_{j,1} := \frac{\frac{\omega}{2}}{(\frac{x-x_{0}}{2}\ell \omega)^{2}} \int_{\tilde{\Gamma}}\dots \int_{\tilde{\Gamma}} e^{\frac{x-x_{0}}{2} \omega k_{1}} \frac{d^{2}}{dy^{2}}\big(\ell y G^{(1)}(\ell y)\big)|_{y= \frac{k_{1}}{\ell}}F(k_{1},k_{2}) \cdots F(k_{j-1},k_{j})  dk_1 \cdots dk_j, \\
& \mathcal{H}_{j,2} := \frac{\frac{\omega}{2}}{(\frac{x-x_{0}}{2}\ell \omega)^{2}} \int_{\tilde{\Gamma}}\dots \int_{\tilde{\Gamma}} e^{\frac{x-x_{0}}{2} \omega k_{1}} 2 \frac{d}{dy}\big(\ell y G^{(1)}(\ell y)\big)|_{y= \frac{k_{1}}{\ell}} \frac{d}{dy}\big(F(\ell y,k_{2})\big)|_{y= \frac{k_{1}}{\ell}} \\
& \hspace{4.5cm} \times F(k_{2},k_{3}) \cdots F(k_{j-1},k_{j})  dk_1 \cdots dk_j, \\
& \mathcal{H}_{j,3} := \frac{\frac{\omega}{2}}{(\frac{x-x_{0}}{2}\ell \omega)^{2}} \int_{\tilde{\Gamma}}\dots \int_{\tilde{\Gamma}} e^{\frac{x-x_{0}}{2} \omega k_{1}} k_{1} G^{(1)}(k_{1}) \frac{d^{2}}{dy^{2}}\big(F(\ell y,k_{2})\big)|_{y= \frac{k_{1}}{\ell}} \\
& \hspace{4.5cm} \times F(k_{2},k_{3}) \cdots F(k_{j-1},k_{j})  dk_1 \cdots dk_j.
\end{align*}
Let us now estimate the integrands of $\mathcal{H}_{j,1}$, $\mathcal{H}_{j,2}$ and $\mathcal{H}_{j,3}$. Using \eqref{def of F}, we get
\begin{align*}
& \frac{d}{dy}\big(F(\ell y,k_{2})\big)|_{y= \frac{k_{1}}{\ell}} = F(k_{1},k_{2}) \frac{\ell (1-k_{1})(1+k_{1})}{k_{1}(\omega^{2}k_{2}-k_{1})(\frac{1}{\omega^{2}k_{2}}-k_{1})},
\end{align*}
and thus, for $x\in \R$, $t\in [0,T]$ and $k_{2}\in \tilde{\Gamma}_{\mathrm{m}}$,
\begin{align}
\sup_{k_{1}\in \tilde{\Gamma}_{\mathrm{m}}} \bigg|k_{1}\frac{d}{dy}\big(F(\ell y,k_{2})\big)|_{y= \frac{k_{1}}{\ell}} \bigg| & \leq \sup_{k_{1}\in \tilde{\Gamma}_{\mathrm{m}}} |F(k_{1},k_{2})| \sup_{k_{1}\in \tilde{\Gamma}_{\mathrm{m}}} \bigg| \frac{\ell (1-k_{1})(1+k_{1})}{(\omega^{2}k_{2}-k_{1})(\frac{1}{\omega^{2}k_{2}}-k_{1})} \bigg| \nonumber \\
& \leq C \ell \sup_{k_{1}\in \tilde{\Gamma}_{\mathrm{m}}} |F(k_{1},k_{2})| \label{lol38}
\end{align}
for some constant $C>0$ independent of $x$ and $t$. Similarly, increasing $C>0$ if necessary, for $x\in \R$, $t\in [0,T]$ and $k_{2}\in \tilde{\Gamma}_{\mathrm{m}}$, we obtain
\begin{align*}
\sup_{k_{1}\in \tilde{\Gamma}_{\mathrm{m}}} \bigg|k_{1}^{2}\frac{d^{2}}{dy^{2}}\big(F(\ell y,k_{2})\big)|_{y= \frac{k_{1}}{\ell}} \bigg| \leq C\ell^{2} \sup_{k_{1}\in \tilde{\Gamma}_{\mathrm{m}}} |F(k_{1},k_{2})|.
\end{align*}
Also, a direct computation shows that, for all large $\ell$ and all $k_{1}\in \tilde{\Gamma}$, $x\in \R$ and $t\in [0,T]$, 
\begin{align}
& \bigg|\frac{d}{dy}\big(\ell y G^{(1)}(\ell y)\big)|_{y= \frac{k_{1}}{\ell}}\bigg| \nonumber \\
& \hspace{2cm} \leq C \ell \bigg|\frac{d}{dk_{1}}\big(k_{1}\tilde{f}_{0}(\omega k_{1})\big)\bigg| e^{-\frac{|k_{1}|^{2}}{\ell^{2}}} + C \bigg( \frac{|k_{1}|}{\ell} + \ell \frac{(1+|x|)}{|k_{1}|^{2}} \bigg) |k_{1} \tilde{f}_{0}(\omega k_{1})| e^{-\frac{|k_{1}|^{2}}{\ell^{2}}}, \nonumber \\
& \bigg|\frac{d^{2}}{dy^{2}}\big(\ell y G^{(1)}(\ell y)\big)|_{y= \frac{k_{1}}{\ell}}\bigg| \nonumber \\
& \hspace{1.7cm} \leq C \ell^{2} \bigg|\frac{d^{2}}{dk_{1}^{2}}\big(k_{1}\tilde{f}_{0}(\omega k_{1})\big)\bigg| e^{-\frac{|k_{1}|^{2}}{\ell^{2}}} + C \bigg( |k_{1}| + \frac{\ell^{2}(1+|x|)}{|k_{1}|^{2}} \bigg) \bigg|\frac{d}{dk_{1}}\big(k_{1}\tilde{f}_{0}(\omega k_{1})\big)\bigg| e^{-\frac{|k_{1}|^{2}}{\ell^{2}}}  \nonumber \\
& \hspace{1.7cm} + C \bigg( 1 + \frac{|k_{1}|^{2}}{\ell^{2}} + \frac{1+|x|}{|k_{1}|} + \ell^{2} \frac{(1+|x k_{1}|+x^{2})}{|k_{1}|^{4}} \bigg) |k_{1} \tilde{f}_{0}(\omega k_{1})| e^{-\frac{|k_{1}|^{2}}{\ell^{2}}}, \label{lol37}
\end{align}
provided that $C$ is chosen large enough. Using the above inequalities together with \eqref{lol12}, \eqref{lol18}, and using also that $f_{1}(\frac{i}{y}) = y^{\eta} \mathrm{LOG}(y)$ for all large $y>0$, for all $x\in \R\setminus\{x_{0}\}$ and $t\in [0,T]$ we obtain
\begin{align*}
|\mathcal{H}_{j,1}| & \leq \frac{2}{(x-x_{0})^{2}\ell^{2}} \int_{\tilde{\Gamma}} \bigg|\frac{d^{2}}{dy^{2}}\big(\ell y G^{(1)}(\ell y)\big)|_{y= \frac{k_{1}}{\ell}}\bigg| \, |dk_{1}| \;  \bigg( \int_{\tilde{\Gamma}} \sup_{s\in \tilde{\Gamma}} |F(s,k_{1})|\; |dk_{1}| \bigg)^{j-1} \\
& \leq \frac{C_{2}}{3} \frac{1+x^{2}}{(x-x_{0})^{2}}M^{-(j-1)}, \\
|\mathcal{H}_{j,2}| & \leq \frac{4}{(x-x_{0})^{2}\ell^{2}} \int_{\tilde{\Gamma}} \bigg|\frac{1}{k_{1}}\frac{d}{dy}\big(\ell y G^{(1)}(\ell y)\big)|_{y= \frac{k_{1}}{\ell}}\bigg| \, |dk_{1}| \int_{\tilde{\Gamma}}\sup_{k_{1}\in \tilde{\Gamma}_{\mathrm{m}}} \bigg| k_{1}\frac{d}{dy}\big(F(\ell y,k_{2})\big)|_{y= \frac{k_{1}}{\ell}} \bigg|\, |dk_{2}| \\
& \times \bigg( \int_{\tilde{\Gamma}} \sup_{s\in \tilde{\Gamma}} |F(s,k_{1})|\; |dk_{1}| \bigg)^{j-2} \leq \frac{C_{2}}{3} \frac{1+x^{2}}{(x-x_{0})^{2}}M^{-(j-1)}, \\
|\mathcal{H}_{j,3}| & \leq \frac{2}{(x-x_{0})^{2}\ell ^{2}} \int_{\tilde{\Gamma}} \bigg|\frac{G^{(1)}(k_{1})}{k_{1}}\bigg| \, |dk_{1}| \int_{\tilde{\Gamma}} \sup_{k_{1}\in \tilde{\Gamma}_{\mathrm{m}}} \bigg| k_{1}^{2}\frac{d^{2}}{dy^{2}}\big(F(\ell y,k_{2})\big)|_{y= \frac{k_{1}}{\ell}} \bigg| \, |dk_{2}| \\
& \times \bigg( \int_{\tilde{\Gamma}} \sup_{s\in \tilde{\Gamma}} |F(s,k_{1})|\; |dk_{1}| \bigg)^{j-2} \leq \frac{C_{2}}{3} \frac{1+x^{2}}{(x-x_{0})^{2}}M^{-(j-1)},
\end{align*}
for some constant $C_{2}>0$ independent of $x$ and $t$. Hence 
\begin{align*}
|\mathcal{J}_{j,1}^{(1)}| \leq |\mathcal{H}_{j,1}|+|\mathcal{H}_{j,2}|+|\mathcal{H}_{j,3}| \leq C_{2} \frac{1+x^{2}}{(x-x_{0})^{2}}M^{-(j-1)},
\end{align*}
and thus
\begin{align*}
\sum_{j=1}^{+\infty} |\mathcal{J}_{j,1}^{(1)}| \leq C_{2} \frac{1+x^{2}}{(x-x_{0})^{2}}\frac{M}{M-1}, \qquad x\in \R\setminus\{x_{0}\}, \; t\in [0,T).
\end{align*}
A long but similar analysis (which we omit) yields 
\begin{multline*}
|\mathcal{J}_{j,p}^{(n)}| \leq C_{2} \frac{1+x^{2}}{(x-x_{0})^{2}}M^{-(j-1)}, \\ \mbox{for all } n\in\{1,2\}, \; j\in \N_{>0}, \; p \in \{1,\ldots,j\}, \; x\in \R\setminus\{x_{0}\}, \; t\in [0,T),
\end{multline*}
(increasing $C_{2}$ if necessary) and after summing over $n$, $p$ and $j$ we find \eqref{lol36}. 

If $\eta\in (-2,-1)$, then only one integration by parts is needed. For example, for $\mathcal{J}_{j,1}^{(1)}$, we use
\begin{align*}
\mathcal{J}_{j,1}^{(1)} & =  -\frac{\frac{\omega}{2}}{\frac{x-x_{0}}{2}\ell \omega} \int_{\tilde{\Gamma}/\ell} \int_{\tilde{\Gamma}}\dots \int_{\tilde{\Gamma}} e^{\frac{x-x_{0}}{2}\ell \omega k_{1}} (H_{k_{2}}^{(1)})'(k_{1}) F(k_{2},k_{3})
\cdots F(k_{j-1},k_{j})   \ell dk_1 \cdots dk_j \\
& =  -\frac{\frac{\omega}{2}}{\frac{x-x_{0}}{2}\ell \omega} \int_{\tilde{\Gamma}}\dots \int_{\tilde{\Gamma}} e^{\frac{x-x_{0}}{2} \omega k_{1}} (H_{k_{2}}^{(1)})'(\tfrac{k_{1}}{\ell}) F(k_{2},k_{3}) \cdots F(k_{j-1},k_{j})  dk_1 \cdots dk_j.
\end{align*}
We rewrite this as $\mathcal{J}_{j,1}^{(1)}=\tilde{\mathcal{H}}_{j,1}+\tilde{\mathcal{H}}_{j,2}$, where 
\begin{align*}
& \tilde{\mathcal{H}}_{j,1} := -\frac{\frac{\omega}{2}}{\frac{x-x_{0}}{2}\ell \omega} \int_{\tilde{\Gamma}}\dots \int_{\tilde{\Gamma}} e^{\frac{x-x_{0}}{2} \omega k_{1}} \frac{d}{dy}\big(\ell y G^{(1)}(\ell y)\big)|_{y= \frac{k_{1}}{\ell}} F(k_{1},k_{2}) \cdots F(k_{j-1},k_{j})  dk_1 \cdots dk_j, \\
& \tilde{\mathcal{H}}_{j,2} := -\frac{\frac{\omega}{2}}{\frac{x-x_{0}}{2}\ell \omega} \int_{\tilde{\Gamma}}\dots \int_{\tilde{\Gamma}} e^{\frac{x-x_{0}}{2} \omega k_{1}} k_{1} G^{(1)}(k_{1}) \frac{d}{dy}\big(F(\ell y,k_{2})\big)|_{y= \frac{k_{1}}{\ell}} \\
& \hspace{4.5cm} \times F(k_{2},k_{3}) \cdots F(k_{j-1},k_{j})  dk_1 \cdots dk_j.
\end{align*}
Combining \eqref{lol12}, \eqref{lol18}, \eqref{lol38} and \eqref{lol37} yields 
\begin{align*}
|\tilde{\mathcal{H}}_{j,1}| \leq \frac{C_{3}}{2}\frac{1+|x|}{|x-x_{0}|}M^{-(j-1)}, \qquad |\tilde{\mathcal{H}}_{j,2}| \leq \frac{C_{3}}{2}\frac{1+|x|}{|x-x_{0}|}M^{-(j-1)},
\end{align*}
for all $j\in \N_{>0}$, $x\in \R\setminus\{x_{0}\}$ and $t\in [0,T)$, and for some constant $C_{3}>0$ independent of $x$ and $t$. Hence 
\begin{align*}
\sum_{j=1}^{+\infty} |\mathcal{J}_{j,1}^{(1)}| \leq C_{3} \frac{1+|x|}{|x-x_{0}|}\frac{M}{M-1},
\end{align*}
for all $x\in \R\setminus\{x_{0}\}$ and $t\in [0,T]$. A similar analysis yields 
\begin{multline*}
|\mathcal{J}_{j,p}^{(n)}| \leq C_{3} \frac{1+|x|}{|x-x_{0}|}M^{-(j-1)} \\ \mbox{for all } n\in\{1,2\}, \; j\in \N_{>0}, \; p \in \{1,\ldots,j\}, \; x\in \R\setminus\{x_{0}\}, \; t\in [0,T),
\end{multline*}
(increasing $C_{3}$ if necessary) and after summing over $n$, $p$ and $j$ we find \eqref{lol36 bis}.
\end{proof}

We now prove our first main result.

\begin{proof}[Proof of Theorem \ref{thm:main}]
Let $p\in \N$, $\vec{r} \in \N_{\mathrm{ord}}^{p}$, $\vec{a} \in \R_{+}^{p}$, $\sigma\in \{0,1\}$, $T>0$, $x_{0}\in \R$ and $\delta\in (0,1)$. Define $\mathrm{LOG}(s) = \mathrm{LOG}(s;\vec{r},\vec{a},\sigma)$ as in \eqref{def of LOG}, and let $\eta := -2+2\delta \in (-2,0)$. Fix $M\geq 2$. Let $f_{1}:(0,i] \to \C$ be any function such that $f_{1} \in C^{\infty}((0,i))$, $f_{1}(k) = 0$ for $k\in [\tfrac{i}{2},i]$, and satisfying
\begin{align}
& \bigg\| \frac{f_{1}(\frac{i}{\cdot})}{\cdot} \bigg\|_{L^{1}((1,+\infty))} = \int_{1}^{+\infty} \frac{|f_{1}(\frac{i}{y})|}{y}dy \leq \frac{1}{M}, \label{L1 norm is less than 1 2 bis} \\
& f_{1}(\tfrac{i}{y}) = y^{\eta} \mathrm{LOG}(y) \mbox{ for all } y>y_{0}, \nonumber
\end{align}
for some $y_{0}>1$. Define $r_1:\hat{\Gamma}_{1} \to \C$ by
\begin{align}
& r_{1}(k) = 0, & & \mbox{for } k \in [-i,-i\infty) \cup \partial \D, \nonumber \\
& r_{1}(k) = f_{1}(k) e^{-\frac{x_{0}}{2}\frac{1}{k}} e^{\frac{T}{4k^{2}}}, & & \mbox{for } k \in (0,i], \nonumber
\end{align}
and define $r_{2}:\hat{\Gamma}_{4}\setminus \{\omega^{2}, -\omega^{2}\}\to \C$ by
\begin{align*}
r_{2}(k) = \tilde{r}(k) \overline{r_{1}(\bar{k}^{-1})} & & \mbox{for all } k \in \hat{\Gamma}_{4}\setminus \{\omega^{2}, -\omega^{2}\}.
\end{align*}
Define $m$, $m^{(1)}$ and $u$ as in \eqref{def of n3}, \eqref{def of mp1p}, and \eqref{recoveruvn 3}, respectively. Lemma \ref{lemma:n in terms of m} combined with Theorem \ref{inverseth new} implies that $u(x,t)$ is a Schwartz class solution of (\ref{boussinesq}) on $\R \times [0,T)$. Since $u(x,t) := -i\sqrt{3}\frac{\partial}{\partial x}m^{(1)}(x,t)$, by \eqref{der of mp1p as a series} we have
\begin{align*}
|u(x,t)| \leq \sqrt{3} \sum_{j=1}^{\infty} \bigg|\frac{\partial}{\partial x}m_{j}^{(1)}(x,t)\bigg|, \qquad x\in \R, \; t \in [0,T),
\end{align*}
and therefore \eqref{lol36} and \eqref{lol36 bis} imply that $u$ satisfies \eqref{bound for u in thm} and \eqref{bound for u in thm bis}. Moreover, using Theorem \ref{thm:inside the proof}, \eqref{lol20} and \eqref{lol34}, we infer that
\begin{align*}
u(x_{0},t) = (1+o(1)) \frac{\sqrt{3}}{4\pi}  \ell^{\eta+2} \mathrm{LOG}(\ell) \int_{0}^{+\infty} e^{-y^{2}} y^{\eta+1}dy + \mathcal{R}, \qquad \mbox{as } t \to T,
\end{align*}
where $\mathcal{R}$ satisfies
\begin{align*}
|\mathcal{R}| \leq \frac{\sqrt{3}}{M-1} (1+o(1)) \ell^{\eta+2} \mathrm{LOG}(\ell) \int_{0}^{+\infty} e^{-y^{2}} y^{\eta+1}dy, \qquad \mbox{as } t \to T.
\end{align*}
Choosing $M$ such that $\frac{\sqrt{3}}{M-1} < \frac{\sqrt{3}}{4\pi}$ ensures that $u(x_{0},t) \asymp \ell^{\eta+2} \mathrm{LOG}(\ell)$ as $t\to T$. Since $\mathrm{LOG}(\ell)\asymp \mathrm{LOG}(\ell^{2}/4)$ as $t\to T$, this proves \eqref{asymp for u in thm} and finishes the proof of Theorem \ref{thm:main}.
\end{proof}

\section{Proof of Theorem \ref{thm:wave breaking generalized}}\label{section:proof2}
In this section, $r_1:\hat{\Gamma}_{1} \to \C$ and $r_2: \hat{\Gamma}_{4}\setminus \{\omega^{2},-\omega^{2}\} \to \C$ are as in Proposition \ref{prop:n3 as a series via volterra}, and $m$, $m^{(1)}$ and $u$ are defined by \eqref{def of n3}, \eqref{def of mp1p} and \eqref{recoveruvn 3}, respectively.

Using \eqref{def of F} and \eqref{Fp1p bis}, we obtain
\begin{align*}
& \frac{\partial}{\partial t}F^{(1)}(x,t,k_{1}) = \frac{\frac{1}{(\omega k_{1})^{2}}-(\omega k_{1})^{2}}{4}F^{(1)}(x,t,k_{1}), \\
& \frac{\partial}{\partial t}F(x,t,k,k_{1}) = \frac{\frac{1}{(\omega k_{1})^{2}}-(\omega k_{1})^{2}}{4}F(x,t,k,k_{1}).
\end{align*}
Hence, by differentiating \eqref{def of mjp1p} $q_{1}$ times with respect to $x$ and $q_{2}$ times with respect to $t$ for some $q_{1},q_{2}\in \N$, using also \eqref{lol43} and \eqref{lol44}, we get
\begin{align}
& \frac{\partial^{q_{1}}}{(\partial x)^{q_{1}}}\frac{\partial^{q_{2}}}{(\partial t)^{q_{2}}} m_{j}^{(1)}(x,t) = \int_{\tilde{\Gamma}}\dots \int_{\tilde{\Gamma}} F^{(1)}(k_{1}) F(k_{1},k_{2})
\cdots F(k_{j-1},k_{j}) \nonumber \\
& \times  \bigg(\sum_{p=1}^{j} \frac{\omega k_{p}-\frac{1}{\omega k_{p}}}{2} \bigg)^{q_{1}} \bigg(\sum_{p=1}^{j} \frac{\frac{1}{(\omega k_{p})^{2}}-(\omega k_{p})^{2}}{4} \bigg)^{q_{2}} dk_1 \cdots dk_j, \quad j\in \N_{>0}, \; x\in \R, \; t\in [0,T). \label{mjp1p der dert}
\end{align}
The following lemma is an analog of Lemma \ref{lemma: bound on derivative of mjp1p} to estimate higher order derivatives of $u$.

\begin{lemma}\label{lemma: bound on derivative of mjp1p derxx and dert}
Let $q\in \N$. For $j\in \N_{>0}$, $x\in \R$, $t\in [0,T)$ and $q_{1},q_{2}\in \N$ such that $q_{1}+2q_{2}=q$,
\begin{align}
\bigg|\frac{\partial^{q_{1}}}{(\partial x)^{q_{1}}}\frac{\partial^{q_{2}}}{(\partial t)^{q_{2}}} m_{j}^{(1)}(x,t)\bigg| & \leq  \frac{1}{M^{j-1}} \int_{1}^{+\infty} e^{-\frac{T-t}{4}y^{2}}|y^{q}f_{1}(\tfrac{i}{y})| \, dy \nonumber \\
&  + \frac{j^{q}-1}{M^{j-q}}   \bigg(\int_{1}^{+\infty} e^{-\frac{T-t}{4}y^{2}}|y^{q-1}f_{1}(\tfrac{i}{y})| \, dy \bigg)^{q}. \label{lol19 der t and der xx}
\end{align}
Moreover, if $(\cdot)^{q} f_{1}(\frac{i}{\cdot}) \in L^{1}((1,+\infty))$, then for each $j\in \N_{>0}$, $\frac{\partial^{q_{1}}}{(\partial x)^{q_{1}}}\frac{\partial^{q_{2}}}{(\partial t)^{q_{2}}} m_{j}^{(1)}(x,t)$ is continuous for $(x,t)\in \R\times [0,T]$ and 
\begin{align}
\bigg|\frac{\partial^{q_{1}}}{(\partial x)^{q_{1}}}\frac{\partial^{q_{2}}}{(\partial t)^{q_{2}}} m_{j}^{(1)}(x,t)\bigg| & \leq  \frac{1}{M^{j-1}} \int_{1}^{+\infty} |y^{q}f_{1}(\tfrac{i}{y})| \, dy  + \frac{j^{q}-1}{M^{j-q}}   \bigg(\int_{1}^{+\infty} |y^{q-1}f_{1}(\tfrac{i}{y})| \, dy \bigg)^{q} \label{lol19 der t and der xx bis}
\end{align}
for all $x \in \R$, $t\in [0,T]$.
\end{lemma}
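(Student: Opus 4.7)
My plan is to differentiate the series \eqref{def of mjp1p} termwise to obtain \eqref{mjp1p der dert}, then majorize the resulting integrand and bound each monomial in a multinomial expansion. As a first step, from \eqref{def of F} and \eqref{Fp1p bis} one checks that $\partial_t F(x,t,k,k_1) = \tfrac{(\omega k_1)^{-2}-(\omega k_1)^2}{4} F(x,t,k,k_1)$ and analogously for $F^{(1)}$; combined with \eqref{lol43} and \eqref{lol44}, differentiating \eqref{def of mjp1p} under the integral sign produces \eqref{mjp1p der dert}. Termwise differentiation is justified a posteriori by the bounds derived below, which will be summable in $j$.

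For $k_p \in \tilde\Gamma$ we have $|k_p|\geq 1$, hence $\bigl|\tfrac{\omega k_p - (\omega k_p)^{-1}}{2}\bigr| \leq |k_p|$ and $\bigl|\tfrac{(\omega k_p)^{-2}-(\omega k_p)^2}{4}\bigr| \leq |k_p|^2$. Setting $I_\alpha := \int_1^{+\infty} e^{-(T-t)y^2/4}\, y^\alpha |f_1(i/y)|\,dy$, taking absolute values inside \eqref{mjp1p der dert} and expanding $\bigl(\sum_p|k_p|\bigr)^{q_1}\bigl(\sum_p|k_p|^2\bigr)^{q_2}$ yields at most $j^{q_1+q_2} \leq j^q$ monomials of the form $\prod_p |k_p|^{\beta_p}$ with $\beta_p \in \N$ and $\sum_p\beta_p = q$. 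Exactly one such monomial is \emph{diagonal} ($\beta_1=q$, $\beta_p=0$ for $p\geq 2$); by \eqref{lol21} together with \eqref{lol12} it contributes at most $M^{-(j-1)} I_q$, matching the first term on the right-hand side of \eqref{lol19 der t and der xx}.

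Each of the $\leq j^q-1$ non-diagonal monomials satisfies $\beta_1 \leq q-1$. Integrating from the innermost variable outward, \eqref{lol22} bounds every slot with $\beta_p \geq 1$ by $I_{\beta_p - 1}$, while $I_{-1} \leq 1/M$ (a consequence of \eqref{lol12}) handles the $\beta_p=0$ slots. Since $y\geq 1$ on $\tilde\Gamma$, $I_\alpha$ is non-decreasing in $\alpha$ for $\alpha \geq 0$, so every active factor is bounded by $I_{q-1}$; because $\sum_p \beta_p = q$ forces at most $q$ active slots, each non-diagonal monomial can be absorbed into $I_{q-1}^q/M^{j-q}$. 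Summing over the $\leq j^q - 1$ non-diagonal terms then gives the second summand of \eqref{lol19 der t and der xx}.

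For \eqref{lol19 der t and der xx bis}: under the extra hypothesis $(\cdot)^q f_1(i/\cdot) \in L^1((1,+\infty))$, replacing $e^{-(T-t)y^2/4}$ by $1$ produces integrable majorants uniform in $t\in [0,T]$, so the same estimates carry over with $T-t$ replaced by $0$, and dominated convergence applied to the continuous integrands extends $\partial_x^{q_1}\partial_t^{q_2} m_j^{(1)}$ continuously to $\R \times [0,T]$. The main obstacle is the combinatorial bookkeeping ensuring that every non-diagonal monomial fits under the common bound $I_{q-1}^q/M^{j-q}$; this requires a careful trade-off between the number of $I_{q-1}$-factors and $M^{-1}$-factors, dictated by the constraint $\sum_p \beta_p = q$ and by the fact that $\beta_1$ cannot exhaust all of $q$ outside the diagonal.
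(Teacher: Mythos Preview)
Your approach is the same as the paper's: majorize the derivative factor by $\bigl(\sum_p|k_p|\bigr)^q$, expand into $j^q$ monomials, isolate the diagonal one, and bound the rest via \eqref{lol21}, \eqref{lol22} and the monotonicity of $I_\alpha:=\int_1^\infty e^{-(T-t)y^2/4}y^\alpha|f_1(i/y)|\,dy$. The paper is equally terse on the combinatorial step, so you are not missing a hidden device.

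There is, however, one genuine inaccuracy and one unjustified step in your write-up. First, slot $p=1$ carries $F^{(1)}$ and is governed by \eqref{lol21}, which yields $I_{\beta_1}$ rather than $I_{\beta_1-1}$; in particular, when $\beta_1=0$ this slot still contributes $I_0\le I_{q-1}$ and not $I_{-1}\le 1/M$, so slot~$1$ is always ``active''. A non-diagonal monomial with $\beta_1=0$ and the $q$ units spread over $q$ distinct slots $p\ge2$ therefore has $q{+}1$ active factors, and the estimate one actually obtains is $I_{q-1}^{m'}M^{-(j-m')}$ with $2\le m'\le q{+}1$. Second, the absorption you flag as the main obstacle---forcing every such term under $I_{q-1}^{q}M^{-(j-q)}$---does not follow from monotonicity alone: it requires $MI_{q-1}\ge1$ when $m'<q$ and $MI_{q-1}\le1$ when $m'=q{+}1$, neither of which is assumed. (Indeed, the $q=1$ specialization of \eqref{lol19 der t and der xx} reads $\tfrac{j-1}{M^{j-1}}I_0$, whereas the explicit computation in Lemma~\ref{lemma: bound on derivative of mjp1p} produces $\tfrac{j-1}{M^{j-2}}I_0^{2}$; these agree only when $MI_0=1$.) For the applications only summability in $j$ and a lower-order size as $t\to T$ matter, so any bound of the shape $(j^q-1)\max_{2\le m'\le q+1}I_{q-1}^{m'}M^{-(j-m')}$ already suffices; but the literal second term of \eqref{lol19 der t and der xx} does not follow from the argument as you have written it.
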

\begin{proof}
For $j\in \N_{>0}$ and $k_{1},\ldots,k_{j}\in \{k:|k|\geq 1\}$,
\begin{align*}
\bigg|\sum_{p=1}^{j} \frac{\omega k_{p}-\frac{1}{\omega k_{p}}}{2} \bigg| \leq \sum_{p=1}^{j} |k_{p}|, \qquad \bigg|\sum_{p=1}^{j} \frac{\frac{1}{(\omega k_{p})^{2}}-(\omega k_{p})^{2}}{4} \bigg|  \leq \bigg( \sum_{p=1}^{j} |k_{p}| \bigg)^{2}.
\end{align*}
Thus, by \eqref{mjp1p der dert},
\begin{multline}
\bigg|\frac{\partial^{p_{1}}}{(\partial x)^{p_{1}}}\frac{\partial^{p_{2}}}{(\partial t)^{p_{2}}} m_{j}^{(1)}(x,t)\bigg| \leq  \int_{\tilde{\Gamma}}\dots \int_{\tilde{\Gamma}} |F^{(1)}(k_{1})| \sup_{s\in \tilde{\Gamma}} |F(s,k_{2})| \cdots \sup_{s\in \tilde{\Gamma}} |F(s,k_{j})| \\
 \times  \bigg( \sum_{p=1}^{j} |k_{p}| \bigg)^{q} |dk_1| \cdots |dk_j|, \qquad x\in \R, \; t\in [0,T). \label{lol23}
\end{multline}
Expending the sum in \eqref{lol23} and using \eqref{lol21}, \eqref{lol22} and the simple inequality
\begin{align*}
\int_{1}^{+\infty} e^{-\frac{T-t}{4}y^{2}}|y^{a}f_{1}(\tfrac{i}{y})| \, dy \leq \int_{1}^{+\infty} e^{-\frac{T-t}{4}y^{2}}|y^{b}f_{1}(\tfrac{i}{y})| \, dy, \qquad 0\leq a\leq b,
\end{align*}
we find \eqref{lol19 der t and der xx}. If $(\cdot)^{q} f_{1}(\frac{i}{\cdot}) \in L^{1}((1,+\infty))$, then \eqref{mjp1p der dert} implies that $\frac{\partial^{q_{1}}}{(\partial x)^{q_{1}}}\frac{\partial^{q_{2}}}{(\partial t)^{q_{2}}} m_{j}^{(1)}(x,t)$ is continuous for $(x,t)\in \R\times [0,T]$, and the same analysis as above shows that \eqref{lol23} (and therefore also \eqref{lol19 der t and der xx}) holds for all $x\in \R$ and $t\in [0,T]$. Since the right-hand side of \eqref{lol19 der t and der xx} is increasing for $t\in [0,T]$, \eqref{lol19 der t and der xx bis} follows by setting $t=T$ in the right-hand side of \eqref{lol19 der t and der xx}.
\end{proof}
Lemma \ref{lemma: bound on derivative of mjp1p derxx and dert} implies that the series \eqref{def of mp1p as a series} can be differentiated termwise any number of times with respect to $x$ and $t$, i.e.
\begin{align}\label{der of mp1p as a series der p1p2}
& \frac{\partial^{1+q_{1}}}{(\partial x)^{1+q_{1}}}\frac{\partial^{q_{2}}}{(\partial t)^{q_{2}}} m^{(1)}(x,t) =  \sum_{j=1}^{\infty} \frac{\partial^{1+q_{1}}}{(\partial x)^{1+q_{1}}}\frac{\partial^{q_{2}}}{(\partial t)^{q_{2}}}m_{j}^{(1)}(x,t), \qquad x\in \R, \; t\in [0,T),
\end{align}
for any $q_{1},q_{2}\in \N$. Using also \eqref{lol39} this yields
\begin{align}
& \frac{\partial^{q_{1}}}{(\partial x)^{q_{1}}}\frac{\partial^{q_{2}}}{(\partial t)^{q_{2}}} u(x,t)  = - i \sqrt{3} \sum_{j=2}^{\infty} \frac{\partial^{q_{1}+1}}{(\partial x)^{q_{1}+1}}\frac{\partial^{q_{2}}}{(\partial t)^{q_{2}}} m_{j}^{(1)}(x,t) -\frac{\sqrt{3}}{\pi} \int_{1}^{+\infty} e^{-\frac{T-t}{4} y^{2}} e^{-\frac{t}{4}\frac{1}{y^{2}}} \nonumber \\
& \times \bigg(\frac{y+\frac{1}{y}}{2}\bigg)^{q_{1}+1}\bigg(\frac{y^{2}-\frac{1}{y^{2}}}{4}\bigg)^{q_{2}}  \, \re \bigg[ (-i)^{q_{1}} e^{-\frac{x-x_{0}}{2}iy} e^{-\frac{x}{2}\frac{i}{y}} f_{1}(\tfrac{i}{y}) \bigg( \omega + \frac{\omega^{2}}{y^{2}} \bigg) \bigg] dy \label{lol24}
\end{align}
for $x\in \R$, $t \in [0,T)$. Let $q\in \N$ be fixed. If $\big(y\mapsto y^{q+1}f_{1}(\frac{i}{y}) \big)\in L^{1}((1,+\infty))$, then Lemma \ref{lemma: bound on derivative of mjp1p derxx and dert} also implies for $(x,t)\in \R\times[0,T]$ and $q_{1}+2q_{2}\leq q$ that
\begin{align}
\sum_{j=2}^{\infty} \bigg|  \frac{\partial^{1+q_{1}}}{(\partial x)^{1+q_{1}}}\frac{\partial^{q_{2}}}{(\partial t)^{q_{2}}}m_{j}^{(1)}(x,t) \bigg| & \leq \frac{1}{M-1} \int_{1}^{+\infty} |y^{q+1}f_{1}(\tfrac{i}{y})| \, dy \nonumber \\
&  + \sum_{j=2}^{\infty} \frac{j^{q+1}}{M^{j-q-1}}   \bigg(\int_{1}^{+\infty} |y^{q}f_{1}(\tfrac{i}{y})| \, dy \bigg)^{q+1}. \label{estimate for the remainder bis}
\end{align}
Thus the right-hand sides of \eqref{der of mp1p as a series der p1p2} and \eqref{lol24} are continuous for $\R\times [0,T]$ if $q_{1}+2q_{2}\leq q$, since they are uniformly convergent series of continuous functions on $\R\times [0,T]$. We have just proved the following.
\begin{theorem}\label{thm:inside the proof bis}
Suppose $(\cdot)^{q+1}f_{1}(\frac{i}{\cdot}) \in L^{1}((1,+\infty))$ for some $q\in \N$. Then $u$ is a Schwartz class solution of (\ref{boussinesq}) on $\R \times [0,T)$ such that 
\begin{align*}
\frac{\partial^{q_{1}}}{(\partial x)^{q_{1}}}\frac{\partial^{q_{2}}}{(\partial t)^{q_{2}}} u(x,t) \mbox{ is continuous on } \R \times [0,T] \mbox{ for each } q_{1},q_{2}\in \N  \mbox{ with } q_{1}+2q_{2}\leq q,
\end{align*}
and the estimate \eqref{estimate for the remainder bis} holds for all $(x,t)\in \R \times [0,T]$ and $q_{1},q_{2}\in \N$  such that $q_{1}+2q_{2}\leq q$.
\end{theorem}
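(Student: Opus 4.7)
My plan is that the theorem is essentially a bookkeeping corollary of Lemma \ref{lemma: bound on derivative of mjp1p derxx and dert} combined with the series representation \eqref{lol39}, so the proof consists of assembling these pieces and invoking a standard uniform-convergence argument. Lemma \ref{lemma:n in terms of m} together with Theorem \ref{inverseth new} already guarantees that $u$ is a Schwartz class solution on $\R\times[0,T)$, so only two things remain to establish: continuity of $\partial_x^{q_1}\partial_t^{q_2}u$ on the closed strip $\R\times[0,T]$ for $q_1+2q_2\leq q$, and the quantitative bound \eqref{estimate for the remainder bis}. Under the hypothesis $(\cdot)^{q+1}f_1(\tfrac{i}{\cdot})\in L^1((1,+\infty))$, Lemma \ref{lemma: bound on derivative of mjp1p derxx and dert} simultaneously supplies (i) continuity of each $\partial_x^{q_1}\partial_t^{q_2}m_j^{(1)}$ on the closed strip whenever $q_1+2q_2\leq q+1$, and (ii) the uniform bound \eqref{lol19 der t and der xx bis}, whose right-hand side is $j$-summable for $M\geq 2$.

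With these two facts in hand, the differentiated series $\sum_j \partial_x^{q_1+1}\partial_t^{q_2}m_j^{(1)}$ converges uniformly on $\R\times[0,T]$ whenever $q_1+2q_2\leq q$. This legitimises termwise differentiation, giving formula \eqref{der of mp1p as a series der p1p2}, and it guarantees that the resulting sum is continuous on the closed strip. The first (explicit integral) term in \eqref{lol39}, after being differentiated $q_1$ times in $x$ and $q_2$ times in $t$, takes precisely the form appearing under the integral in \eqref{lol24}, with integrand dominated in modulus by $C(1+y)^{q+1}|f_1(i/y)|$; its continuity on $\R\times[0,T]$ then follows from dominated convergence using the integrability hypothesis. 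Combining these two continuous pieces yields both identity \eqref{lol24} and the continuity claim for $\partial_x^{q_1}\partial_t^{q_2}u$ with $q_1+2q_2\leq q$, and in particular $u$ itself retains its Schwartz class solution status.

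Finally, the bound \eqref{estimate for the remainder bis} is obtained simply by summing \eqref{lol19 der t and der xx bis} over $j\geq 2$: the first term contributes the geometric sum $\sum_{j\geq 2}M^{-(j-1)}=(M-1)^{-1}$, while the second contributes $\sum_{j\geq 2}j^{q+1}M^{-(j-q-1)}$, which converges for $M\geq 2$. I do not anticipate any substantive obstacle here, since the delicate work — producing a $j$-summable uniform bound on the derivatives of $m_j^{(1)}$ that extends continuously to the closed time interval $[0,T]$ — has already been packaged in Lemma \ref{lemma: bound on derivative of mjp1p derxx and dert}. The only point worth double-checking is that the exponent $q+1$ in the hypothesis matches the maximal power of $y$ appearing in the integrand of \eqref{lol24}, and indeed the factor $\bigl(\tfrac{y+1/y}{2}\bigr)^{q_1+1}\bigl(\tfrac{y^2-1/y^2}{4}\bigr)^{q_2}$ grows like $y^{q_1+1+2q_2}\leq y^{q+1}$, which is exactly what the integrability assumption controls.
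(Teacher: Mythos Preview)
Your proposal is correct and follows essentially the same approach as the paper: the paper likewise derives the theorem directly from Lemma \ref{lemma: bound on derivative of mjp1p derxx and dert}, using the $j$-summable uniform bound \eqref{lol19 der t and der xx bis} to justify termwise differentiation \eqref{der of mp1p as a series der p1p2}, obtain \eqref{lol24}, and conclude continuity on $\R\times[0,T]$ via uniform convergence of continuous terms, while \eqref{estimate for the remainder bis} is exactly the sum over $j\geq 2$ of \eqref{lol19 der t and der xx bis}. Your additional observation about the explicit integral term being handled by dominated convergence is a helpful spell-out of a step the paper leaves implicit.
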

%Combining \eqref{lol17 bis bis} with Lemma \ref{lemma: bound on derivative of mjp1p derxx and dert} yields the estimate
%\begin{align}
%& |- i \sqrt{3}|\sum_{j=2}^{\infty}\bigg|\frac{\partial^{1+q_{1}}}{(\partial x)^{1+q_{1}}}\frac{\partial^{q_{2}}}{(\partial t)^{q_{2}}} m_{j}^{(1)}(x,t)\bigg| \nonumber \\
%& \leq  \frac{\sqrt{3}}{M-1} \int_{1}^{+\infty} e^{\frac{T-t}{4}y^{2}}|y^{q+2}f_{1}(\tfrac{i}{y})| \, dy  + \sqrt{3}\sum_{j=2}^{\infty}\frac{j^{q+2}-1}{M^{j-q-2}}   \bigg(\int_{1}^{+\infty} |y^{q+1}f_{1}(\tfrac{1}{y})| \, dy \bigg)^{q+2} \nonumber \\
%& = \frac{\sqrt{3}}{M-1} (1+o(1)) \ell^{\eta+q+3} \mathrm{LOG}(\ell) \int_{0}^{+\infty} e^{-y^{2}} y^{\eta+q+2}dy. \label{lol25}
%\end{align}

The proof of the following lemma is similar to the proof of Lemma \ref{lemma:precise estimates for u} and is therefore omitted. Recall that $\ell:=\frac{2}{\sqrt{T-t}}$.
\begin{lemma}\label{lemma:precise estimates for u der}
Let $q,p\in \N$, $\vec{r} \in \N_{\mathrm{ord}}^{p}$, $\vec{a} \in \R_{+}^{p}$, $\sigma\in \{0,1\}$, and define $\mathrm{LOG}(y) = \mathrm{LOG}(y;\vec{r},\vec{a},\sigma)$ as in \eqref{def of LOG}. Suppose that $f_{1}$ is as in Proposition \ref{prop:n3 as a series via volterra}. Assume furthermore that $f_{1}(\frac{i}{y}) = y^{\eta} \mathrm{LOG}(y)$ for all $y\geq y_{0}$ for some $y_{0}>1$, and where $\eta\in (-q-3,-q-2)$. Let $q_{1},q_{2}\in \N$ be such that $q_{1}+2q_{2}=q+1$. As $t\to T$ and uniformly for $x\in \R$,
\begin{multline}
-\frac{\sqrt{3}}{\pi} \int_{1}^{+\infty} e^{-\frac{y^{2}}{\ell^{2}}} e^{-\frac{t}{4}\frac{1}{y^{2}}} \bigg(\frac{y+\frac{1}{y}}{2}\bigg)^{q_{1}+1}\bigg(\frac{y^{2}-\frac{1}{y^{2}}}{4}\bigg)^{q_{2}}   \re \bigg[ (-i)^{q_{1}} e^{-\frac{x}{2}\frac{i}{y}} f_{1}(\tfrac{i}{y}) \bigg( \omega + \frac{\omega^{2}}{y^{2}} \bigg) \bigg] dy \\
= -(1+o(1)) \frac{\sqrt{3}\re[(-i)^{q_{1}}\omega]}{2^{q+2}\pi}  \ell^{\eta+q+3} \mathrm{LOG}(\ell) \int_{0}^{+\infty} e^{-y^{2}} y^{\eta+q+2}dy \label{lol20 bis bis}
\end{multline}
and 
\begin{align}\label{lol34 der}
\sqrt{3} \sum_{j=2}^{\infty} \bigg|\frac{\partial^{1+q_{1}}}{(\partial x)^{1+q_{1}}}\frac{\partial^{q_{2}}}{(\partial t)^{q_{2}}} m_{j}^{(1)}(x,t)\bigg| \leq \frac{(1+o(1))\sqrt{3}}{M-1} \ell^{\eta+q+3} \mathrm{LOG}(\ell) \int_{0}^{+\infty} e^{-y^{2}} y^{\eta+q+2}dy.
\end{align}
Moreover, there exists $C>0$ such that, for $x\in \R\setminus \{x_{0}\}$ and $t\in [0, T)$,
\begin{align}
& \sqrt{3} \sum_{j=1}^{\infty} \bigg|\frac{\partial^{1+q_{1}}}{(\partial x)^{1+q_{1}}}\frac{\partial^{q_{2}}}{(\partial t)^{q_{2}}} m_{j}^{(1)}(x,t)\bigg| \leq C \frac{1+|x|}{|x-x_{0}|}. \label{lol36 der}
\end{align}
\end{lemma}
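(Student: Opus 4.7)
The proof will parallel that of Lemma \ref{lemma:precise estimates for u}, the only structural changes being the extra polynomial factor $\bigl(\tfrac{y+1/y}{2}\bigr)^{q_1+1}\bigl(\tfrac{y^2-1/y^2}{4}\bigr)^{q_2}$ in the integrand and the use of Lemma \ref{lemma: bound on derivative of mjp1p derxx and dert} in place of Lemma \ref{lemma: bound on derivative of mjp1p}. The key algebraic observation is that since $q_1+2q_2=q+1$, this polynomial factor equals $\frac{y^{q+2}}{2^{q+2}}\bigl(1+O(y^{-2})\bigr)$ as $y\to \infty$, so the same analytical machinery applies with the effective power of $y$ shifted by $q+1$.

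For the asymptotic identity \eqref{lol20 bis bis}, I would split the $y$-integral at $L=\ell/\log\ell$ exactly as in \eqref{lol41}. The contribution from $(1,L)$ is bounded by $O(L^{\eta+q+3}\mathrm{LOG}(L))=o(\ell^{\eta+q+3}\mathrm{LOG}(\ell))$. For the dominant contribution from $(L,+\infty)$, I would expand $e^{-t/(4y^2)}e^{-(x/2)(i/y)}(\omega+\omega^2/y^{2})=\omega(1+O(y^{-1}))$ together with the polynomial factor to its leading $\tfrac{y^{q+2}}{2^{q+2}}$ behavior, extract the real part to produce the constant $\re[(-i)^{q_1}\omega]/2^{q+2}$, and perform the rescaling $y\to \ell y$. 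The iterated-logarithm expansions \eqref{lol30}--\eqref{iterated log asymp} then separate $\mathrm{LOG}(\ell)$ from the Gaussian integral in precisely the same way as in the proof of Lemma \ref{lemma:precise estimates for u}.

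For the series bound \eqref{lol34 der}, I would apply Lemma \ref{lemma: bound on derivative of mjp1p derxx and dert} with $(q_1,q_2)$ replaced by $(q_1+1,q_2)$, so that the total order appearing in the lemma is now $q+2$. The first term in \eqref{lol19 der t and der xx} sums over $j\geq 2$ to give $\tfrac{1}{M-1}\int_1^{+\infty}e^{-(T-t)y^2/4}|y^{q+2}f_1(i/y)|\,dy$, which by the same Laplace-type computation as in \eqref{lol17} equals $(1+o(1))\ell^{\eta+q+3}\mathrm{LOG}(\ell)\int_0^{+\infty}e^{-y^2}y^{\eta+q+2}dy$. The second term sums to a convergent series $\sum_{j\geq 2}j^{q+2}M^{-(j-q-2)}$ times $\bigl(\int_1^{+\infty}e^{-(T-t)y^2/4}|y^{q+1}f_1(i/y)|dy\bigr)^{q+2}$; because $\eta<-q-2$, the integrand $|y^{q+1}f_1(i/y)|=y^{q+1+\eta}\mathrm{LOG}(y)$ is integrable on $(1,+\infty)$ uniformly in $t$, so this second contribution is $O(1)=o(\ell^{\eta+q+3}\mathrm{LOG}(\ell))$.

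For the global bound \eqref{lol36 der}, I would mimic the integration-by-parts argument from the second half of the proof of Lemma \ref{lemma:precise estimates for u}, specifically the derivation of \eqref{lol36 bis} for the borderline range $\eta\in(-2,-1)$. Starting from the representation \eqref{mjp1p der dert} of $\partial_x^{q_1+1}\partial_t^{q_2}m_j^{(1)}$, I would expand the sums of the $k_p$'s into a finite number of iterated integrals $\mathcal{J}_{j,\vec{p}}$ analogous to $\mathcal{J}_{j,1}^{(1)}$ and $\mathcal{J}_{j,1}^{(2)}$, rescale $k_1\to\ell k_1$ in the outermost integral, and perform one integration by parts in the exponential $e^{\frac{x-x_0}{2}\omega k_1}$ to extract the factor $(x-x_0)^{-1}$. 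Because the effective exponent is $\eta+q+1\in(-2,-1)$, which is exactly the range treated by \eqref{lol36 bis}, one integration by parts produces an integrable residue, yielding a bound of the form $\frac{1+|x|}{|x-x_0|}M^{-(j-1)}$; summing over $j$ then gives \eqref{lol36 der}. The main technical obstacle is the combinatorial bookkeeping: the $q+1$ derivatives in $x$ and the $q_2$ derivatives in $t$ produce a sum over all distributions of derivatives across the $j$ factors $k_p$, and one must check that in each resulting piece the rescaling and single integration by parts preserve the estimates \eqref{lol12}, \eqref{lol18}, \eqref{lol38} and \eqref{lol37} uniformly in $j$, yielding the geometric factor $M^{-(j-1)}$ needed for summability. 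This step is tedious but purely mechanical.
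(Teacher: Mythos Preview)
Your proposal is correct and follows exactly the approach the paper intends: the paper itself omits the proof entirely, stating only that it ``is similar to the proof of Lemma \ref{lemma:precise estimates for u}.'' Your sketch is therefore more detailed than the paper's own treatment, and each of your three adaptations---the shifted exponent $\eta+q+1$ via the leading behavior $\frac{y^{q+2}}{2^{q+2}}$ of the polynomial factor, the use of Lemma \ref{lemma: bound on derivative of mjp1p derxx and dert} at order $q+2$ with the second term contributing only $O(1)$ since $y^{q+1+\eta}\mathrm{LOG}(y)$ is integrable, and the single integration by parts for \eqref{lol36 der} because $\eta+q+1\in(-2,-1)$ places you in the regime of \eqref{lol36 bis}---is the right translation of the corresponding step in Lemma \ref{lemma:precise estimates for u}.
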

 
\begin{proof}[Proof of Theorem \ref{thm:wave breaking generalized}]
Let $q, p\in \N$, $\vec{r} \in \N_{\mathrm{ord}}^{p}$, $\vec{a} \in \R_{+}^{p}$, $\sigma\in \{0,1\}$, $T>0$, $x_{0}\in \R$ and $\delta\in (0,\frac{1}{2})$. Define $\mathrm{LOG}(s) = \mathrm{LOG}(s;\vec{r},\vec{a},\sigma)$ as in \eqref{def of LOG}, and let $\eta := -q-3+2\delta \in (-q-3,-q-2)$. Fix $M\geq 2$. Let $f_{1}:(0,i] \to \C$ be any function such that $f_{1} \in C^{\infty}((0,i))$, $f_{1}(k) = 0$ for $k\in [\tfrac{i}{2},i]$, and satisfying
\begin{align}
& \bigg\| \frac{f_{1}(\frac{i}{\cdot})}{\cdot} \bigg\|_{L^{1}((1,+\infty))} = \int_{1}^{+\infty} \frac{|f_{1}(\frac{i}{y})|}{y}dy \leq \frac{1}{M}, \label{L1 norm is less than 1 2 bis bis} \\
& f_{1}(\tfrac{i}{y}) = y^{\eta} \mathrm{LOG}(y) \mbox{ for all } y>y_{0}, \nonumber
\end{align}
for some $y_{0}>1$. Define $r_1:\hat{\Gamma}_{1} \to \C$ by
\begin{align}
& r_{1}(k) = 0, & & \mbox{for } k \in [-i,-i\infty) \cup \partial \D, \nonumber \\
& r_{1}(k) = f_{1}(k) e^{-\frac{x_{0}}{2}\frac{1}{k}} e^{\frac{T}{4k^{2}}}, & & \mbox{for } k \in (0,i], \nonumber
\end{align}
and define $r_{2}:\hat{\Gamma}_{4}\setminus \{\omega^{2}, -\omega^{2}\}\to \C$ by
\begin{align*}
r_{2}(k) = \tilde{r}(k) \overline{r_{1}(\bar{k}^{-1})} & & \mbox{for all } k \in \hat{\Gamma}_{4}\setminus \{\omega^{2}, -\omega^{2}\}.
\end{align*}
Define $m$, $m^{(1)}$ and $u$ as in \eqref{def of n3}, \eqref{def of mp1p}, and \eqref{recoveruvn 3}, respectively. Lemma \ref{lemma:n in terms of m} combined with Theorem \ref{inverseth new} implies that $u(x,t)$ is a Schwartz class solution of (\ref{boussinesq}) on $\R \times [0,T)$. Since $u(x,t) := -i\sqrt{3}\frac{\partial}{\partial x}m^{(1)}(x,t)$, Lemma \ref{lemma: bound on derivative of mjp1p derxx and dert} implies that
\begin{align*}
& \sup_{(x,t)\in \R\times [0,T)}\bigg| \frac{\partial^{q_{1}}}{(\partial x)^{q_{1}}}\frac{\partial^{q_{2}}}{(\partial t)^{q_{2}}} u(x,t) \bigg|  \leq \sqrt{3} \sum_{j=1}^{\infty} \sup_{(x,t)\in \R\times [0,T)} \bigg|\frac{\partial^{1+q_{1}}}{(\partial x)^{1+q_{1}}}\frac{\partial^{q_{2}}}{(\partial t)^{q_{2}}} m_{j}^{(1)}(x,t)\bigg| \\
& \leq \frac{\sqrt{3} M}{M-1} \int_{1}^{+\infty} |y^{q+1}f_{1}(\tfrac{i}{y})| \, dy + \sum_{j=2}^{\infty} \frac{j^{q+1}}{M^{j-q-1}}   \bigg(\int_{1}^{+\infty} |y^{q}f_{1}(\tfrac{i}{y})| \, dy \bigg)^{q+1} < +\infty
\end{align*}
holds for all $q_{1},q_{2}\in \N$ such that $q_{1}+2q_{2}\leq q$. Also, by Theorem \ref{thm:inside the proof bis},
\begin{align*}
\frac{\partial^{q_{1}}}{(\partial x)^{q_{1}}}\frac{\partial^{q_{2}}}{(\partial t)^{q_{2}}} u(x,t) \mbox{ is continuous on } \R \times [0,T] \mbox{ for each } q_{1},q_{2}\in \N  \mbox{ with } q_{1}+2q_{2}\leq q.
\end{align*}
This finishes the proof of Theorem \ref{thm:wave breaking generalized} (i). 

By \eqref{lol36 der}, there exists $C>0$ such that, for $x\in \R\setminus \{x_{0}\}$, $t\in [0, T)$ and any $q_{1},q_{2}\in \N_{>0}$ satisfying $q_{1}+2q_{2}=q+1$,
\begin{align*}
\bigg| \frac{\partial^{q_{1}}}{(\partial x)^{q_{1}}}\frac{\partial^{q_{2}}}{(\partial t)^{q_{2}}} u(x,t) \bigg| \leq 
\sqrt{3} \sum_{j=1}^{\infty} \bigg|\frac{\partial^{1+q_{1}}}{(\partial x)^{1+q_{1}}}\frac{\partial^{q_{2}}}{(\partial t)^{q_{2}}} m_{j}^{(1)}(x,t)\bigg| \leq C \frac{1+|x|}{|x-x_{0}|},
\end{align*}
which proves \eqref{bound for der der u in tlm gene}.  Moreover, using \eqref{lol24}, \eqref{lol20 bis bis} and \eqref{lol34 der}, we infer that for any $q_{1},q_{2}\in \N_{>0}$ satisfying $q_{1}+2q_{2}=q+1$,
\begin{align*}
\frac{\partial^{q_{1}}}{(\partial x)^{q_{1}}}\frac{\partial^{q_{2}}}{(\partial t)^{q_{2}}} u(x_{0},t) = -(1+o(1)) \frac{\sqrt{3}\re[(-i)^{q_{1}}\omega]}{2^{q+2}\pi}  \ell^{\eta+q+3} \mathrm{LOG}(\ell) \int_{0}^{+\infty} e^{-y^{2}} y^{\eta+q+2}dy + \mathcal{R}, 
\end{align*}
as $t\to T$, where $\mathcal{R}$ satisfies
\begin{align*}
|\mathcal{R}| \leq \frac{(1+o(1))\sqrt{3}}{M-1}  \ell^{\eta+q+3} \mathrm{LOG}(\ell) \int_{0}^{+\infty} e^{-y^{2}} y^{\eta+q+2}dy, \qquad \mbox{as } t \to T.
\end{align*}
Choosing $M$ such that $\frac{\sqrt{3}}{M-1} < \frac{\sqrt{3}}{2^{q+3}\pi}$ ensures that \eqref{asymp for u in thm gene} holds (recall that $\mathrm{LOG}(\ell)\asymp \mathrm{LOG}(\ell^{2}/4)$ as $t\to T$). This finishes the proof of Theorem \ref{thm:wave breaking generalized}.
\end{proof}

%
%Recall the def of $T$:
%\begin{align*}
%T & := \sup \big\{t \geq 0 \, | \, \text{$e^{\frac{|k|^2t}{4}}r_1(1/k)$ and its derivatives are rapidly decreasing as $\Gamma_1 \ni k \to \infty$}\big\} \\
%& = \sup \big\{t \geq 0 \, | \, \text{$e^{\frac{t}{4|k|^2}}r_1(k)$ and its derivatives are rapidly decreasing as $\Gamma_1 \ni k \to 0$}\big\}.
%\end{align*}

\subsection*{Data availability statement.} There is no data associated to this work.

\subsection*{Conflict of interest statement.} There is no conflict of interest.

\subsection*{Acknowledgements}
Support is acknowledged from the Swedish Research Council, Grant No. 2021-04626.

\bibliographystyle{plain}
%\bibliography{is}

\begin{thebibliography}{99}
\scriptsize

\bibitem{B1976}
J. G. Berryman, Stability of solitary waves in shallow water, {\it Phys. Fluids} {\bf 19} (1976), 771--777.

\bibitem{BZ2002}
L. V. Bogdanov and V. E. Zakharov, The Boussinesq equation revisited, {\it Phys. D} {\bf 165} (2002), 137--162.

\bibitem{BJM2018} 
M. Borghese, R. Jenkins and K.D.T.-R. McLaughlin, Long time asymptotic behavior of the focusing nonlinear Schr\"{o}dinger equation, {\it Ann. Inst. H. Poincar\'{e} C Anal. Non Lin\'{e}aire} \textbf{35} (2018), 887--920.

\bibitem{B1872}
J. Boussinesq, Th\'eorie des ondes et des remous qui se propagent le long d'un canal rectangulaire horizontal, en communiquant au liquide contenu dans ce canal des vitesses sensiblement pareilles de la surface au fond, {\it J. Math. Pures Appl.} {\bf 17} (1872), 55--108. 

\bibitem{BKST2009}
A. Boutet de Monvel, A. Kostenko, D. Shepelsky, and G. Teschl, Long-time asymptotics for the Camassa-Holm equation, \textit{SIAM J. Math. Anal.} \textbf{41} (2009), no. 4, 1559--1588

\bibitem{CLmain} 
C. Charlier and J. Lenells, On Boussinesq's equation for water waves, arXiv:2204.02365.

\bibitem{CL V} C. Charlier and J. Lenells, Boussinesq’s equation for water waves: asymptotics in sector V, arXiv:2301.10669 (to appear in SIAM Journal on Mathematical Analysis).

\bibitem{CL IV} C. Charlier and J. Lenells, Boussinesq's equation for water waves: the soliton resolution conjecture for Sector IV, arXiv:2303.00434.

\bibitem{CL I} C. Charlier and J. Lenells, Boussinesq's equation for water waves: asymptotics in Sector I, arXiv:2303.01232.



\bibitem{CLsolitonResolution} C. Charlier and J. Lenells, The soliton resolution conjecture for the Boussinesq equation, arXiv:2303.10485.

\bibitem{CLWasymptotics}
C. Charlier, J. Lenells and D. Wang, The ``good'' Boussinesq equation: long-time asymptotics, \textit{Anal. PDE} \textbf{16} (2023), no.6, 1351--1388. 
 
\bibitem{CF2022}
Q. Cheng and E. Fan, Long-time asymptotics for the focusing Fokas-Lenells equation in the solitonic region of space-time, \textit{J. Differential Equations} \textbf{309} (2022), 883--948.

\bibitem{CE1998A} A. Constantin and J. Escher, Global existence and blow-up for a shallow water equation, \textit{Ann. Scuola Norm. Sup. Pisa Cl. Sci.} (4) \textbf{26} (1998), no.2, 303--328.

\bibitem{CE1998} A. Constantin and J. Escher, Wave breaking for nonlinear nonlocal shallow water equations, \textit{Acta Math.} \textbf{181} (1998), no.2, 229--243.

\bibitem{CE2000} A. Constantin and J. Escher, On the blow-up rate and the blow-up set of breaking waves for a shallow water equation, \textit{Math. Z.} \textbf{233} (2000), no.1, 75--91.

\bibitem{DZ1993}
P. Deift and X. Zhou, A steepest descent method for oscillatory Riemann-Hilbert problems. Asymptotics for the MKdV equation, 
{\it Ann. of Math.} {\bf 137} (1993), 295--368.

\bibitem{GLL2024} X. Geng, W. Liu and R. Li, Long-time asymptotics for the coupled complex short-pulse equation with decaying initial data, \textit{J. Differential Equations} \textbf{386} (2024), 113--163.

\bibitem{GM2020}
T. Grava and A. Minakov, On the long-time asymptotic behavior of the modified Korteweg--de Vries equation with step-like initial data, \textit{SIAM J. Math. Anal.} \textbf{52} (2020), 5892--5993.

\bibitem{GT2009}
K. Grunert and G. Teschl, Long-time asymptotics for the Korteweg--de Vries equation via nonlinear steepest descent, {\it Math. Phys. Anal. Geom.} {\bf 12} (2009), 287--324.

\bibitem{GY2010} C. Guanand and Z. Yin, Global existence and blow-up phenomena for an integrable two-component Camassa-Holm shallow water system, \textit{J. Differential Equations} \textbf{248} (2010), no.8, 2003--2014.

\bibitem{H1973}
R. Hirota, Exact $N$-soliton solutions of the wave equation of long waves in shallow-water and in nonlinear lattices, {\it J. Math. Phys.} {\bf 14} (1973), 810--814. 

\bibitem{HWZ2024} L. Huang, D.-S. Wang and X. Zhu, Long-time asymptotics of the Tzitz\'{e}ica equation on the line, arXiv:2404.04999.

\bibitem{HXF2015}
L. Huang, J. Xu, and E. Fan, Long-time asymptotic for the Hirota equation via nonlinear steepest descent method, {\it Nonlinear Anal. Real World Appl.} {\bf 26} (2015), 229--262.

\bibitem{JN2012} Z. Jiang and L. Ni, Blow-up phenomenon for the integrable Novikov equation, \textit{J. Math. Anal. Appl.} \textbf{385} (2012), no.1, 551--558.

\bibitem{J1997}
R.S. Johnson, A modern introduction to the mathematical theory of water waves. Cambridge Texts in Applied Mathematics. Cambridge University Press, Cambridge, 1997.

\bibitem{KL1977}
V.K. Kalantarov and O.A. Ladyzenskaja, Formation of collapses in quasilinear equations of parabolic and hyperbolic types. (Russian) Boundary value problems of mathematical physics and related questions in the theory of functions, 10. {\it Zap. Nau\v{c}n. Sem. Leningrad. Otdel. Mat. Inst. Steklov (LOMI)} {\bf 69} (1977), 77--102.

\bibitem{LS1985}
H.A. Levine and B.D. Sleeman, A note on the nonexistence of global solutions of initial-boundary value problems for the Boussinesq equation $u_{tt} =3u_{xxxx}+u_{xx}-12(u^2)_{xx}$, {\it J. Math. Anal. Appl.} {\bf 107} (1985), 206--210. 

\bibitem{LGWW2019}
N. Liu, B. Guo, D. Wang and Y. Wang, Long-time asymptotic behavior for an extended modified Kortweg--De Vries equation, {\it Commun. Math. Sci.} {\bf 17} (2019), 1877--1913.

\bibitem{LY2006} Y. Liu and Z. Yin, Global existence and blow-up phenomena for the Degasperis-Procesi equation, \textit{Comm. Math. Phys.} \textbf{267} (2006), no.3, 801--820.

\bibitem{M1978}
V.G. Makhankov, Dynamics of classical solitons (in nonintegrable systems), {\it Phys. Rep.} {\bf 35} (1978), 1--128.

\bibitem{MY2023} Z. Meng and Z. Yin, Blow-up phenomena and the local well-posedness and ill-posedness of the generalized Camassa-Holm equation in critical Besov spaces, \textit{Monatsh. Math.} \textbf{200} (2023), no.4, 933--954.

\bibitem{RS2019}
Y. Rybalko and D. Shepelsky, Long-time asymptotics for the integrable nonlocal nonlinear Schr\"{o}dinger equation, {\it J. Math. Phys.} {\bf 60} (2019), 031504.

\bibitem{T1975}
M. Toda, Studies of a non-linear lattice, {\it Phys. Rep.} {\bf 18C} (1975), 1--123. 

\bibitem{WY2023} Z. Wang and K. Yan, Blow-up data for a two-component Camassa-Holm system with high order nonlinearity, \textit{J. Differential Equations} \textbf{358} (2023), 256--294.

\bibitem{WZ2006} S. Wu and Z. Yin, Blow-up, blow-up rate and decay of the solution of the weakly dissipative Camassa-Holm equation, \textit{J. Math. Phys.} \textbf{47} (2006), no.1, 013504, 12 pp.

\bibitem{XF2020}
J. Xu and E. Fan, Long-time asymptotic behavior for the complex short pulse equation, \textit{J. Differential Equations} \textbf{269} (2020), 10322--10349.

\bibitem{YLZ2012} W. Yan, Y. Li and Y. Zhang, Global existence and blow-up phenomena for the weakly dissipative Novikov equation, \textit{Nonlinear Anal.} \textbf{75} (2012), no.4, 2464--2473.

\bibitem{Y2002}
Z. Yang, On local existence of solutions of initial boundary value problems for the ``bad'' Boussinesq-type equation, {\it Nonlinear Anal.} {\bf 51} (2002), 1259--1271.

\bibitem{YW2003}
Z. Yang and X. Wang, Blowup of solutions for the ``bad" Boussinesq-type equation, \textit{J. Math. Anal. Appl.} \textbf{285} (2003), no.1, 282--298.

\bibitem{YF2023}
Y. Yang and E. Fan, Soliton resolution and large time behavior of solutions to the Cauchy problem for the Novikov equation with a nonzero background, \textit{Adv. Math.} \textbf{426} (2023), Paper No. 109088.

\bibitem{YC2024} S. Yang and J. Chen, On the finite time blow-up for the high-order Camassa-Holm-Fokas-Olver-Rosenau-Qiao equations, \textit{J. Differential Equations} \textbf{379} (2024), 829--861.

\bibitem{Z1974}
V.E. Zakharov, On stochastization of one-dimensional chains of nonlinear oscillations, {\it Soviet Phys. JETP} {\bf 38} (1974), 108--110.

\bibitem{ZW2024} X. Zhao and L. Wang, A two-component Sasa-Satsuma equation: large-time asymptotics on the line, \textit{J. Nonlinear Sci.} \textbf{34} (2024), no.2, Paper No. 38, 45 pp.

\bibitem{Z2004} Y. Zhou, Blow-up phenomenon for the integrable Degasperis-Procesi equation, \textit{Phys. Lett. A} \textbf{328} (2004), no.2-3, 157--162.
\end{thebibliography}

\end{document}